\def\NAT@def@citea{\def\@citea{\NAT@separator}}
\theoremstyle{plain}
\newtheorem{theorem}{Theorem}[section]
\newtheorem{lemma}[theorem]{Lemma}
\newtheorem{corollary}[theorem]{Corollary}
\newtheorem{proposition}[theorem]{Proposition}
\theoremstyle{definition}
\newtheorem{definition}[theorem]{Definition}
\newtheorem{example}[theorem]{Example}
\theoremstyle{remark}
\newtheorem{remark}{Remark}
\newcommand{\pprec}{{\prec\!\!\prec}}
\newcommand{\ssucc}{{\succ\!\!\succ}}
\newcommand{\eq}{\mathbin{<\!>}}
\begin{document}


\title{Internal structure and analytical representation of compatible preference relations defined on infinite-dimensional real vector spaces.}

\author{\name{Valentin~V. Gorokhovik\thanks{Email: gorokh@im.bas-net.by}}\thanks{ID: https://orcid.org/0000-0003-2447-5943}
\affil{Institute of Mathematics,
The National Academy of Sciences of Belarus, \\ Minsk, Belarus}
}

\maketitle

\begin{abstract}
The paper deals with partial and weak preference relations defined on infinite-dimensional vector spaces and compatible with algebraic operations. By a partial preference we mean an asymmetric and transitive binary relation, while a weak preference is such a partial preference for which the indifference relation corresponding it is transitive (an indifference relation is the complement to the union of a partial preference and the inverse to it). Our first aim is to study the internal structure of compatible partial and weak preferences. We suppose that the internal structure of a compatible partial preference is elementary if its positive cone is relatively algebraic open, and we refer to such compatible partial preferences as relatively open ones. It is proved that an arbitrary compatible partial preference is the disjoint union of the partially ordered family of relatively open compatible partial preferences and, moreover, as an ordered set this family is an upper lattice. We identify the structure of this upper lattice with the internal structure of a compatible partial preference. The internal structure of compatible weak preferences is more refined: each compatible weak preference is the disjoint union of the chain (the linear ordering family) of relatively open compatible partial preferences the restriction of each of which to the linear hull of its positive cone is a weak preference. Using the chain characterizing the internal structure of a compatible weak preference we define a step-linear function that analytically represents this compatible weak preference. Further, we prove that each compatible partial preference can be regularly extended to a compatible weak preference and, moreover, each compatible partial preference is the intersection of its regular extensions to a compatible weak preference. Due to the latter assertion we obtain an analytical representation of a compatible partial preference by the family of step-linear functions.
\end{abstract}

\begin{keywords}
Real vector spaces; compatible preference; positive cone; intrinsic core; internal structure; analytical representation; step-linear function
\end{keywords}


\section{Introduction}

Preference relations are central objects in optimization theory (in
particular, in vector optimization), decision making, mathematical economics, social sciences and others mathematical theories and their applications. A preference relation is generated by some ordering (it may be a preorder, a partial order, a strict partial order) which is defined on the set of feasible alternatives. In those cases when the set of feasible alternatives is a subset of some vector space it is usually assumed that the ordering which generates a preference relation are compatible with algebraic operations of the vector space (the exact definition of a compatible binary relation is given in Section 2). A compatible ordering is uniquely determined by the cone of positive vectors corresponding it, and therefore properties of a compatible ordering are completely characterized through those of its positive cone.

In most works devoted to vector optimization problems, the authors usually assume that the compatible ordering of the image space of the objective vector-valued function is such that its cone of positive elements has a nonempty interior in some topological or algebraic sense. Such notions as an algebraic interior (also called a core), a relative algebraic interior (also called an intrinsic core), a quasi-interior, a quasi-relative interior and others (see, e.g., \cite{Adan_Novo1,Adan_Novo2,Bao,Cuong1,Cuong2,Cuong3,Grad,Gunter,Hernandez,Khazayel,NZ,Millan,Zalinescu} and the references therein) are used as  generalizations of the conventional topological interiority notion.
The assumptions on nonemptiness interiority of the positive cone are caused not by the essence of vector optimization problems themselves, but by the methods of their study, in particular, by those which are based on theorems (classical or generalized ones) on the separation of convex sets by hyperplanes (or, equivalently, by linear functionals). It is clear, that these methods are not applicable to general vector optimization problems for which a priory assumptions on nonempty interiority of a positive cone do not hold.
A method for studying general (without any a priori assumptions on the positive cone) convex vector optimization problems was first proposed by Martinez-Legaz in \cite{ML88} in the finite-dimensional setting and by Gorokhovik in \cite{Gor86,Gor-m} in the infinite-dimensional setting. Later the results from \cite{Gor86,Gor-m} were developed in \cite{GS97,GS98a,GS99,GS2000,Gor21}. This method is based on the separation of convex sets by halfspaces \cite{Kakutani,Tukey,HF} and on the analytical representations of halfspaces developed in \cite{ML_S87,ML_S88} in the finite-dimensional setting and in \cite{Gor-m,GS97,GS98a,GS99,GS2000,Gor21} in the infinite-dimensional setting.

Using tools developed in \cite{ML_S87,ML_S88}, Martinez-Legaz and Singer in \cite{M-L1} have comprehensively studied compatible preorders defined on finite-dimensional vector spaces.

The purpose of this paper is to study compatible ordering relations defined on infinite-dimensional vector spaces. As a basic object for studying we choose a strict partial order, which is (see, for instance, \cite{Harzheim}) an asymmetric and transitive binary relation and to which we refer here as a partial preference relation (or, shortly, a partial preference). Our choice is quite general, since the asymmetric part of any preorder and any partial order is asymmetric and transitive, so the results obtained here for partial preference relations can be easily extended to preorders and partial orders.

The paper is organized as follows.

Section 2 contains preliminaries on general and compatible partial preferences (more details can be found in \cite{Harzheim,Gor-m21,Pere,Jameson,Schaefer,Wong_Ng,Arrow,Fishburn,Craven}).

In Section 3 we study the internal structure of compatible partial preferences. We suppose that the internal structure of compatible partial preferences, whose positive cone is relatively algebraically open, is elementary. We refer to such compatible partial preferences as relatively open ones. Our main result of Section 3 is the following: each compatible partial preference is the disjoint union of the partially ordered family of relatively open compatible partial preferences and, moreover, this family, as an ordered set, is an upper lattice.  We identify the structure of this upper lattice with the internal structure of the corresponding compatible partial preference.

Section 4 is devoted to compatible weak preferences. A weak preference is such a partial preference for which the indifference relation corresponding it is transitive (an indifference relation is the complement to the union of a partial preference and the inverse to it). We show that the internal structure of compatible weak preferences is more refined: each compatible weak preference is the disjoint union of the chain (the linear ordering family) of relatively open compatible partial preferences the restriction of each of which to the linear hull of its positive cone is a weak preference. It also is shown that for each relatively open compatible partial preference which belongs to the chain characterizing the internal structure of a compatible weak preference there exists a nonzero linear function which is strict monotone with respect to it.
Using this fact the internal structure of a compatible weak preference is associated with a chain of nonzero linear functions which then is used in the next section for constructing a real-valued function analytically representing the compatible weak preference.

Section 5 is started with recalling the definition of a step-linear function. Step-linear functions were firstly introduced in \cite{Gor-m} and then they were more thorough studied in \cite{GS97,GS98a,GS99,GS2000,Gor21}.
The main result of Section 5 is Theorem 5.5 in which we prove that each compatible weak preference admits an analytical representation by a step-linear function.

In Section 6 we prove that each compatible partial preference can be regularly extended to a compatible weak preference and, moreover, each compatible partial preference is the intersection of its regular extensions to a compatible weak preference. The first of these assertions is a counterpart of the Szpilrajn theorem \cite{Szpilrajn}, while the second is a counterpart of the Dushnik--Miller theorem \cite{Dushnik} from the general theory of ordered sets.
Due to these counterparts of classical theorems we obtain an analytical representation of a compatible partial preference by the family of step-linear functions.

\section{Preliminaries.}

The main purpose of this section is to recall the necessary concepts related to binary relations and, in particular, to ordering relations, and also to specify the terminology adopted in this paper. More details on orderings and preference relations can be found, for example, in \cite{Harzheim,Pere,Jameson,Schaefer,Wong_Ng,Arrow,Fishburn,Craven,Gor-m21}.

\subsection{Preference relations}

Let $Y$ be a set and let $\prec \, \subset Y \times Y$ be a binary relation on $Y.$ In applications, in particular, in economics and in the decision making theory, $Y$ is interpreted as the set of feasible commodity bundles or the set of feasible alternatives and a binary relation $\prec$ defined on $Y$ as a preference relation of a consumer or a decision maker. Within the framework of such interpretation the assumption that $\prec$ is \textit{asymmetric} ($y_1 \prec y_2 \Longrightarrow y_2  \hspace{5pt}/\hspace{-10pt}\prec y_1$, where $/\hspace{-10pt}\prec:= (Y\times Y)\,\,\setminus \prec$ is the negation of $\prec$) seems to be quite natural. Indeed, if an alternative $y_1$ is 'better' than an alternative $y_2$ then $y_2$ can't be 'better' than $y_1.$

Each asymmetric binary relation $\prec \, \subset Y \times Y$ generates two other binary relations on $Y$:
the \textit{indifference relation} $\asymp$ defined by
$$
y_1 \asymp y_2 \Longleftrightarrow y_1  \hspace{5pt}/\hspace{-10pt}\prec y_2\,\,\text{and}\,\,y_2 \hspace{5pt}/\hspace{-10pt}\prec y_1;
$$
and the \textit{equipotency relation} $\sim$ defined by
$$
y_1 \sim y_2 \Longleftrightarrow  \{y \in Y \mid y \asymp y_1\} = \{y \in Y \mid y \asymp y_2\}.
$$

It is worth notice that the equipotency relation $\sim$ is not defined directly through the asymmetric
relation $\prec$, but through the indifference relation $\asymp$ generated by $\prec$.

The indifference relation $\asymp$ is reflexive and symmetric
(such relations are called tolerance ones), while the equipotency relation $\sim$ is reflexive, symmetric, and transitive, and, consequently, $\sim$ is an equivalence relation. In general, the inclusion $\sim \subset \asymp$ is true, while the equality $\asymp = \sim$ holds if and only if the indifference relation $\asymp$ is transitive.

Along with asymmetry, the property of transitivity ($y_1 \prec y_2, y_2 \prec y_3 \Longrightarrow y_1 \prec y_3$) is often (but not always, see \cite{Fishburn91,Rub_Gas,Gor_Tr16}) imposed on a preference relation $\prec.$ From now on, any asymmetric and transitive binary relation defined on $Y$ will be referred to as \textit{a partial preference relation}, in short, \textit{a partial preference} on $Y$.

The equipotency relation $\sim$ generated by a partial preference $\prec$ can be defined directly through $\prec$ as follows:
\begin{equation}\label{e2.1a}
y_1 \sim y_2 \Longleftrightarrow \left\{\!\! \begin{array}{ll}
\{y \in Y \mid y \prec y_1\}=\{y \in Y \mid y \prec y_2\}, \\
\{y \in Y \mid y_1 \prec y\}=\{y \in Y \mid y_2 \prec y\}.\\
\end{array}
\right.
\end{equation}

Furthermore, for a partial preference $\prec$ and the equipotency relation $\sim$ generated by it the following implications
$$y \prec z, z \sim u \Longrightarrow y \prec u\,\,\text{\rm and}\,\,y \sim z, z \prec u \Longrightarrow y \prec u$$ hold for any $y,z,u \in Y$.

The union of a partial preference $\prec$ with the equipotency relation $\sim$ generated by $\prec$, denoted by $\precsim := \prec \bigcup \sim$, is a preorder relation (a reflexive and transitive one) on $Y$, and $\sim$ is the greatest (by inclusion) equivalence relation on $Y$ such that the union of it with a partial preference $\prec$ is a preorder relation.

Note that the union of a partial preference relation $\prec$ with the equality (identity) relation $\Delta:=\{(y,y) \in Y \times Y \mid y \in Y\}$ is a reflexive, transitive, and antisymmetric binary relation, i.e., $\prec\bigcup \Delta$ is a partial order on $Y.$ Conversely, if $\preceq$ is a partial order on $Y$ then its asymmetric part $\prec := \preceq \setminus \Delta$  is a partial preference relation (also called a strict partial order). So, partial orders defined on $Y$ correspond one-to-one with partial preferences on $Y$.

A partial preference $\prec \, \subset Y \times Y$ is called \textit{a weak preference} if the indifference relation $\asymp$ generated by it is transitive or, equivalently, if $\asymp = \sim$.

An asymmetric relation $\prec$ is a weak preference if and only if it is negatively transitive, i.e., if $/\hspace{-10pt}\prec$, the negation of $\prec$,   is transitive.

Furthermore, a partial preference $\prec$ is a weak preference  if and only if the preorder relation $\precsim := \prec \bigcup \sim$ is \textit{total}, i.e., if  and only if for any $y_1,y_2 \in Y$ at least one (or even both) of the inequalities, either $y_1 \precsim y_2$ or $y_2 \precsim y_1$, holds.

A partial preference $\prec \, \subset Y \times Y$ is called \textit{a perfect (linear) preference} if for any $y_1,y_2 \in Y$ such that $y_1 \ne y_2$ either $y_1 \prec y_2$ or $y_2 \prec y_1$ holds.

\subsection{Compatible preference relations on real vector spaces}\label{sec-2.2}

When $Y$ is a real vector space, a binary relation (not necessarily a partial preference one) $\prec \, \subset Y \times Y$ is said to be \textit{compatible} with the algebraic operations defined on $Y$ (see, e.g., \cite{Pere,Jameson,Schaefer,Wong_Ng}) if for any $y_1,y_2,y \in Y$ and a real $\lambda > 0$ the following two implications hold:
\begin{equation}\label{e1}
y_1 \prec y_2 \Longrightarrow \lambda y_1 \prec \lambda y_2
\end{equation}
\begin{equation}\label{e2}
y_1 \prec y_2 \Longrightarrow y_1 + y \prec y_2 + y.
\end{equation}

If a compatible binary relation $\prec$ is transitive then for any $y_1,y_2,z_1,z_2 \in Y$ such that $y_1 \prec y_2$ and $z_1 \prec z_2$ one has $y_1 + z_1 \prec y_2 + z_2$. Indeed, from $y_1 \prec y_2$, $z_1 \prec z_2$, and \eqref{e2} we obtain $y_1 +z_1 \prec y_2 + z_1$, $y_2 + z_1  \prec y_2 + z_2$. Further, through the transitivity of $\prec$ we come to $y_1 + z_1 \prec y_2 + z_2$.

It follows from the implications \eqref{e1} and \eqref{e2} that the set $P_\prec := \{y \in Y \mid 0 \prec y \}$ associated with a compatible binary relation $\prec$ is a cone in $Y$ and, moreover,
\begin{equation}\label{e3}
y_1 \prec y_2 \Longleftrightarrow y_2 - y_1 \in P_\prec.
\end{equation}

The cone $P_\prec$ is called \textit{the cone of positive elements} or  \textit{the positive cone} of the compatible relation $\prec$. 

Recall that a set $K \subset Y$ is called a cone if $\lambda y \in K$ for all $y \in K$ and all $\lambda >0$. A cone $K$ is convex if and only if $K+K \subset K$.  A convex cone $K$ is said to be \textit{asymmetric} if $K\cap(-K)= \varnothing$.

A compatible binary relation $\prec$ defined on a real vector space $Y$ is asymmetric and transitive (i.e., $\prec$ is a partial preference relation) if and only if the cone of positive elements $P_\prec$ associated with $\prec$ is asymmetric ($P_\prec \bigcap (-P_\prec) = \varnothing$) and convex ($P_\prec + P_\prec \subset P_\prec$).

Thus, there is a one-to-one correspondence between the collection of partial preference relations which are
compatible with the algebraic operations of a vector space $Y$ and the collection of asymmetric and convex cones from $Y.$

Taking into account the one-to-one correspondence between partial orders and partial preference relations, a real vector space $Y$ endowed with a preference relation $\prec$ that is compatible with the algebraic operation of $Y$ will be called an ordered vector space (cf. with \cite{Pere,Jameson,Schaefer,Wong_Ng}) and will be denoted by $(Y,\prec).$

Let $K$ be a convex cone in $Y.$ The set
$$
L_K:=\{h \in Y\,|\,y+th \in K\,\,\text{for all}\,\,y \in
K\,\,\text{and all}\,\, t \in {\mathbb{R}}\}
$$
is the greatest vector subspace in $Y$ for which the equality $L_K + K = K$ holds.

We refer to $L_K$ as {\it the vector space associated with the convex cone} $K.$

It is easy to verify that $K\bigcup L_K$ also is a
convex cone. A convex cone $K$ is asymmetric if and only if  $K \bigcap
L_K = \varnothing.$ Clearly $L_K = L_{-K}$ for any convex cone $K \subset Y$. Furthermore, for any convex cone $K \subset Y$ the inclusion $L_{K} \subset {\rm Lin}(K)$, where ${\rm Lin}(K):= K - K$ is the linear hull of $K$, is true. Indeed, for every $y \in K$ we have $y+L_K \subset K$ and, consequently, $L_K = (y + L_K) - y \subset K-K = {\rm Lin}(K)$.

The indifference relation $\asymp$ and the equipotency relation $\sim$ generated by a compatible asymmetric relation, in particular, by a compatible partial preference $\prec$, also are compatible with algebraic operations of $Y$. Besides, for the equipotency relation $\sim$ generated by a compatible partial preference $\prec$ we have
\begin{equation}\label{e3a}
y_1 \sim y_2 \Longleftrightarrow y_2 - y_1 \in L_\prec.
\end{equation}
where $L_\prec := L_{P_\prec}$ stands for the vector space associated with the positive cone $P_\prec$.

To prove \eqref{e3a} we note that for a compatible partial preference $\prec$ the definition of the equipotency relation $\sim$ generated by $\prec$ (see \eqref{e2.1a}) can be rewritten as follows:
\begin{equation*}
y_1 \sim y_2 \Longleftrightarrow y_1 + P_\prec = y_2 + P_\prec\,\,\text{and}\,\, y_1 - P_\prec = y_2 - P_\prec.
\end{equation*}
We get from $y_1 + P_\prec = y_2 + P_\prec$ that $y_1 - y_2 + P_\prec = P_\prec$ and, since $P_\prec$ is a cone, that $\alpha(y_1 - y_2) + P_\prec = P_\prec\,\,\forall\,\,\alpha > 0$. Similarly, from the equality $y_1 - P_\prec = y_2 - P_\prec$ we get $\alpha(y_1 - y_2) + P_\prec = P_\prec\,\,\forall\,\,\alpha < 0$. Thus, $y_1 \sim y_2 \Longrightarrow \alpha(y_1 - y_2) + P_\prec = P_\prec\,\,\forall\,\,\alpha \in {\mathbb{R}} \Longrightarrow y_1 - y_2 \in L_\prec$.

Conversely, let $y_1 - y_2 \in L_\prec$. By the definition of $L_\prec$ we have $y_1 - y_2 + P_\prec \subset P_\prec$ or, equivalently, $y_1 + P_\prec \subset y_2 + P_\prec$. But, since $y_2-y_1 \in L_\prec$, we have also $y_2 + P_\prec \subset y_1 + P_\prec$ and, consequently, $y_1 + P_\prec = y_2 + P_\prec$. Using the equality $L_\prec = L_{P_\prec} = L_{(-P_\prec)}$ we get in a similar way that $y_1 - P_\prec = y_2 - P_\prec$. This completes the proof of \eqref{e3a}.

A subset $H$ of a real vector space $Y$ is called \textit{a halfspace} if both $H$ and its complement $Y\setminus H$ are convex. A halfspace $H$ which is a cone is called \textit{a conical halfspace}. An asymmetric convex cone $H \subset Y$ is a conical halfspace if and only if the set $Y \setminus (H\bigcup(-H))$ is a vector subspace in $Y$. Moreover, for an asymmetric conical halfspace $H \subset Y$ the equality $L_H = Y \setminus (H\bigcup(-H))$ holds.

It is evident that for any halfspace $H$ its complement $Y  \setminus H$ is a halfspace too. A widespread class of pairs of complementary halfspaces is given by the subsets
$$
H_>(\phi, \gamma):=\{y \in Y \mid \phi(y) > \gamma\}\,\,\text{and}\,\,H_\le(\phi, \gamma):=\{y \in Y \mid \phi(y) \le \gamma\},
$$
where $\phi:Y \to {\mathbb{R}}$ is non-zero linear function on $Y$ and $\gamma \in {\mathbb{R}}$ is an arbitrary real.

Semispaces and their complements give other examples of complementary halfspaces. Recall \cite{Hammer,Klee} that a subset $S \subset Y$ is called \textit{a semispace generated by a point $a \in Y$} (or, simply, \textit{a semispace at a point $a \in Y$}) if it is a maximal (by inclusion) convex set in $Y \setminus \{a\}$.

Semispaces were introduced and studied by Hammer \cite{Hammer} and Klee \cite{Klee}.  Halfspaces (also called hemispaces) in finite-dimensional spaces were studied by Lassak \cite{Las} and Martinez-Legaz and Singer \cite{ML_S88} and in infinite-dimensional spaces by Lassak and Pruszynski \cite{LasPro}, Gorokhovik and Semenkova \cite{GS98}, and Gorokhovik and Shinkevich \cite{GS99,GS2000}. It is worth noting that in the English translation of the paper \cite{GS98}, halfspaces are mistakenly translated as semispaces.

A compatible partial preference $\prec$ is a compatible weak preference if and only if its positive cone $P_\prec$ is a asymmetric conical halfspace, while a compatible partial preference $\prec$ is a compatible perfect (linear) preference if and only if its positive cone $P_\prec$ is a semispace at the origin.

\section{The internal structure of compatible partial preferences.}

Let $(Y,\prec)$ be an ordered vector space.

A vector $z \in Y$ is called \textit{strongly positive}, this is denoted by $0 \pprec z$ or $z \ssucc 0$, if for any $y \in P_\prec$ there is a real number $\mu >0$ such that $\mu y \prec z$.

The set of all strongly positive vectors of $Y$ is denoted by $P_\pprec$. It follows immediately from the definition that $P_\pprec \subset P_\prec$. The next example shows that $P_\pprec$ is generally a proper subset of $P_\prec$.

\begin{example}
{\rm Let $Y = {\mathbb{R}}^2$ and $P_\prec = {\mathbb{R}}^2_+ \setminus \{0\}$. It is easily seen that the vectors $(\alpha,0)$ and $(0,\alpha)$ with $\alpha > 0$ are positive, but not strongly positive, while any vectors $(\alpha_1,\alpha_2)$ with $\alpha_1 > 0, \alpha_2 > 0$ are strongly positive.}
\end{example}

To characterize strongly positive vectors we need the following notions (see \cite{Khazayel,Klee51,Raikov,Rubinstein1,Rubinstein,Holmes,Aliprantis,KhTZ,Millan}).

\textit{The algebraic interior} (or \textit{the core}) of a subset $Q \subset Y$, denoted by ${\rm cr}Q$,  is the subset of $Q$ consisting of all points $z \in Q$ such that the intersection of $Q$ with each straight line of $Y$ passing through $z$ contains an open interval around $z$.

\textit{The relative algebraic interior} (or \textit{the intrinsic core}) of a subset $Q \subset Y$, denoted by ${\rm icr}Q$, is the subset of $Q$ consisting of all points $z \in Q$ such that the intersection of $Q$ with each straight line laying in the affine hull of $Q$ (${\rm aff}Q$) and passing through $y$ contains an open interval around $z$.

More precisely, $z \in {\rm cr}Q$ if and only if for any $y \in Y \setminus \{z\}$  there exists a positive real number $\delta > 0$ such that $z+t(y-z) \in Q$ for all $t \in (-\delta,\delta)$, while $z \in {\rm icr}Q$ if and only if for any $y \in {\rm aff}Q \setminus \{z\}$  there exists a positive real number $\delta > 0$ such that $z+t(y-z) \in Q$ for all $t \in (-\delta,\delta)$.

If a set $Q \subset Y$ is convex then $z \in {\rm icr}Q$ if and only if for any $y \in Q \setminus \{z\}$  there exists a positive real number $\delta > 0$ such that $z+t(y-z) \in Q$ for all $t \in (-\delta,\delta)$

 A set $Q \subset Y$ is called \textit{(relatively) algebraic open} if it coincides with its (relative) algebraic interior, i.e., if $Q = {\rm cr}Q$ ($Q = {\rm icr}Q$).

\begin{theorem}\label{th3.2}
Let $(Y, \prec)$ be an ordered vector space. A vector $z \in Y$ is strongly positive if and only if it belongs to the relative algebraic interior of $P_\prec$, i.e., $P_\pprec = {\rm icr}P_\prec$.
\end{theorem}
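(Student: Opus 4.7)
The plan is to establish both inclusions by translating membership in either side into the single condition ``$z-\mu y\in P_\prec$ for suitable $\mu>0$''. Since $P_\prec$ is an asymmetric convex cone, I will use the simplified characterization of the intrinsic core stated in the excerpt: $z\in{\rm icr}\,P_\prec$ if and only if $z\in P_\prec$ and for every $y\in P_\prec\setminus\{z\}$ there exists $\delta>0$ with $z+t(y-z)\in P_\prec$ for all $t\in(-\delta,\delta)$. Note also that the asymmetry of $\prec$ forces $0\notin P_\prec$, so the case $y=0$ never arises in the definition of strong positivity.

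For the inclusion $P_\pprec\subseteq{\rm icr}\,P_\prec$, I would fix $z\in P_\pprec$ and $y\in P_\prec\setminus\{z\}$ and check the line condition on each side of $t=0$. For $t\in[0,1)$ the point $z+t(y-z)=(1-t)z+ty$ is a convex combination of elements of the convex cone $P_\prec$, hence lies in $P_\prec$. For $t\in(-1,0)$, I would rewrite $z+t(y-z)=(1+|t|)\bigl[z-\frac{|t|}{1+|t|}y\bigr]$; by the cone property this reduces to showing $z-\mu y\in P_\prec$ for $\mu=|t|/(1+|t|)$. Strong positivity supplies some $\mu_0>0$ with $z-\mu_0 y\in P_\prec$, and the identity $z-\mu y=(z-\mu_0 y)+(\mu_0-\mu)y\in P_\prec+P_\prec\subseteq P_\prec$ extends this to every $\mu\in(0,\mu_0]$. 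Any $\delta\in(0,1)$ satisfying $\delta/(1+\delta)\le\mu_0$ then works.

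For the reverse inclusion, take $z\in{\rm icr}\,P_\prec$ and let $y\in P_\prec$ be arbitrary. If $y=z$, then $z-\frac12 y=\frac12 z\in P_\prec$ by the cone property, so $\mu=1/2$ witnesses strong positivity. If $y\ne z$, I would apply the open-interval property at $t=-\delta/2$ to obtain $(1+\delta/2)z-(\delta/2)y\in P_\prec$, and rescale by $1/(1+\delta/2)>0$ to conclude $z-\mu y\in P_\prec$ for $\mu=(\delta/2)/(1+\delta/2)>0$, which is precisely $0\pprec z$. There is no deep obstacle; the main care needed is the algebraic bookkeeping in the substitution between the line parameter $t$ and the strong-positivity parameter $\mu$, together with the separate treatment of the degenerate cases $y=0$ (excluded by asymmetry of $\prec$) and $y=z$.
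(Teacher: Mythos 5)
Your proof is correct and follows essentially the same route as the paper's: both directions hinge on the same substitution $\mu = |t|/(1+|t|)$ between the line parameter and the strong-positivity parameter, with the positive side of the interval handled by convexity of $P_\prec$. Your only departures are cosmetic --- you replace the paper's transitivity step ($ty \prec \mu y \prec z$ for $t\in(0,\mu)$) with the equivalent cone identity $z-\mu y=(z-\mu_0 y)+(\mu_0-\mu)y\in P_\prec+P_\prec\subseteq P_\prec$, and you make explicit the degenerate case $y=z$ that the paper leaves tacit.
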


\begin{proof}
Suppose that $P_\pprec \ne \varnothing$ and choose an arbitrary vector $z \in P_\pprec$. By the definition of $P_\pprec$ for any $y \in P_\prec$ there exists a real $\mu > 0$ such that $\mu y \prec z$. Since $ty \prec \mu y$ for all $t \in (0,\mu)$, then through transitivity of $\prec$ we have $ty \prec z$ for all $t \in (0,\mu)$ or, equivalently, $z - ty \in P_\prec$ for all $t \in (0,\mu)$. Besides, it follows from the inclusion $P_\pprec \subset P_\prec$ that $tz \in P_\prec$ for all $t > 0$. Consequently, using convexity of the positive cone $P_\prec$ we get $z -t(y-z) \in P_\prec$ for all $t \in (0, \mu)$. Moreover, since both $z$ and $y$ belongs to the convex cone $P_\prec$ then $z + t(y-z) \in P_\prec$ for all $t \in [0,1]$. Hence, $z +t(y-z) \in P_\prec$ for all $t \in (-\delta,\delta)$, where $\delta = \min\{\mu, 1\}$. Due to the arbitrary choice of the vector $y \in P_\prec$ and convexity of $P_\prec$, we conclude that $z \in {\rm icr}P_\prec$. It proves the inclusion $P_\pprec \subset {\rm icr}P_\prec$.

To prove the converse inclusion we suppose that ${\rm icr}P_\prec \ne \varnothing$. Let $z \in {\rm icr}P_\prec$. Then for any $y \in P_\prec$ there is a real $\delta > 0$ such that $z + t(y-z) \in P_\prec$ for all $t \in (-\delta,\delta)$. This implies that $z -\displaystyle\frac{|t|}{1+|t|}y \in P_\prec$ for all $t \in (-\delta,0)$. Since $\displaystyle\frac{|t|}{1+|t|} > 0$ and the choice of $y \in P_\prec$ was arbitrary we conclude that $z \in P_\pprec$. Thus, ${\rm icr}P_\prec \subset P_\pprec$ and, consequently, $P_\pprec = {\rm icr}P_\prec$.
\end{proof}

It follows from Theorem \ref{th3.2} that strongly positive vectors exist in any finite-dimensional ordered vector space. At the same time the  next example shows that on each infinite-dimensional vector space there exist such compatible partial preferences $\prec$ for which $P_\pprec = \varnothing$.

\begin{example}\cite[Ch.2, $\S$ 7]{Raikov} 
Let $Y$ be a real vector space and let $\{e_i, i \in I\}$ be a Hamel basis for $Y$. Let us define the compatible partial preference $\prec$ on $Y$ the positive cone of which is the convex cone consisting of non-zero vectors $y \in Y$ whose components $\{y_i,i \in I\}$ in the given basis $\{e_i, i \in I \}$ is nonnegative. Since the basis $\{e_i, i \in I\}$ is infinite, each vector $y \in P_\prec$ has zero components. Suppose that $y_j =0$. Since $e_j \in P_\prec$ and for any real $\mu > 0$ the vector $y - \mu e_j$ does not belong to $P_\prec$, then $y \notin {\rm icr}P_\prec$ and, hence, ${\rm icr}P_\prec = \varnothing$ as $y$ is an arbitrary vector from $P_\prec$.
\end{example}

\begin{remark}
Strong positive vectors were firstly introduced by Krein and Rutman in \cite{Krein} for ordered normed spaces as vectors which belong to ${\rm int}P_\prec$. Since ${\rm cr}P_\prec = {\rm int}P_\prec$, provided that ${\rm int}P_\prec \ne \varnothing$, the concept of strongly positive vectors introduced above extends the definition of Krein and Rutman to arbitrary ordered vector spaces.

Furthermore, when the positive cone $P_\prec$ is generating, i.e., when $P_\prec - P_\prec = Y$, the notion of a strongly positive vector coincides with that of an order unit. Recall \cite{Schaefer,Pere,Jameson}, that a vector $e$ of an ordered vector space $Y$ is called an order unit if the order interval $[-e,e] := \{y \in Y \mid -e \prec y \prec e\}$ is absorbing in $Y$, this means that for any $y \in Y$ there is a real $\lambda >0$ such that $-\lambda e \prec y \prec \lambda e$.
\end{remark}

The strong positivity of a vector $z \in P_\prec$ means that the vector $z \in P_\prec$ majorizes (in the certain sense) all other positive vectors $y \in P_\prec$. Restricting this property to pairs of positive vectors, we get the following definition of the majorization relation on the positive cone.

\begin{definition}
    We say that a positive vector $z \in P_\prec$ \textit{majorizes} a positive vector $y \in P_\prec$, and denote this by $y \unlhd z$, if there is a positive real $\mu > 0$ such that $\mu y \prec z$.
\end{definition}

It is not difficult to verify that $\unlhd$ is a preorder relation (i.e., a reflexive and transitive binary relation) on $P_\prec$. By the symbol $\eq$ we denote the symmetric part of $\unlhd$ defined by $y_1 \eq y_2 \Leftrightarrow y_1 \unlhd y_2, y_2 \unlhd y_1$. It is easy to see that $y_1 \eq y_2$ if and only if there exist reals $\mu > 0$ and $\nu > 0$ such that $\mu y_1 \prec y_2 \prec \nu y_1$. The asymmetric part of $\unlhd$ denoted by $\lhd$ is defined by $y_1 \lhd y_2 \Leftrightarrow y_1 \unlhd y_2, y_2 \hspace{5pt}/\hspace{-10pt}\unlhd y_1$. The relation $\lhd$ is a strict partial order, while  $\eq$ is an equivalency relation.

\begin{remark}
The majorization relation $\unlhd$ on the positive cone of a compatible partial preference relation $\prec$ is related to the relation introduced by Rubinstein in \cite[p.~187]{Rubinstein}  for studying the face structure of convex sets (see also \cite{Rubinstein1,Klee}).
\end{remark}

It immediately follows from the properties of a compatible partial preference $\prec$ and the definition of the majorization relation $\unlhd$ that for any $y,z \in P_\prec$ the implications
\begin{equation}\label{e3.4}
y \prec z \Longrightarrow y \unlhd z\,\,\,\,\text{and}\,\,\,\,y \sim z \Longrightarrow y \eq z
\end{equation}
hold.

Moreover, for any $y,y_1,y_2,z \in Y$ and any positive reals $\alpha > 0, \beta > 0$ the following implications also are true:
\begin{equation}\label{e3.5}
y \unlhd z \Longrightarrow \alpha y \unlhd \beta z;\,\,\,\,\text{}\,\,\,\,y_1 \unlhd z, y_2 \unlhd z \Longrightarrow y_1+y_2 \unlhd z;.
\end{equation}
\begin{equation}\label{e3.5a}
z \unlhd y_1, z \unlhd y_2 \Longrightarrow z \unlhd y_1+y_2.
\end{equation}

For any $z \in P_\prec$ we define the sets
$$
F(z) :=\{y \in P_\prec \mid y \unlhd z\}\,\,\,\,\text{and}\,\,\,\,\widehat{F}(z) :=\{y \in P_\prec \mid y \lhd z\}.
$$
Since $z \in F(z)$ the set $F(z) \ne \varnothing$ for any $z \in P_\prec$. Moreover, it follows from \eqref{e3.5} that for every $z \in P_\prec$ the set $F(z)$ is a convex subcone of the cone $P_\prec$. As for the set $\widehat{F}(z)$ it can be empty for some $z \in P_\prec$. In the case when
$\widehat{F}(z) \ne \varnothing$ it is a cone which is, in general, not necessarily convex.

\begin{example}\label{ex3.5}
Let $\prec$ be the compatible partial preference on ${\mathbb{R}}^2$ with
$P_\prec = \{(z_1, z_2) \mid z_1 \ge 0, z_2 \ge 0\} \setminus \{(0,0)\}.$
Then $\widehat{F}(z)$ is empty for any $z =(z_1, z_2) \in P_\prec$ such that $z_1 > 0, z_2 =0$ or $z_1 = 0, z_2 >0$. For $z =(z_1, z_2) \in P_\prec$ with $z_1 > 0, z_2 >0$ the set $\widehat{F}(z)= \{(z_1, z_2) \mid z_1 > 0, z_2 =0\}\cup \{(z_1, z_2) \mid z_1 = 0, z_2 >0\}$ is a cone which is not convex.
\end{example}

However, when $\prec$ is a compatible weak preference, it follows from $\eqref{e3.4}$ that the majorization relation $\unlhd$ is total and, in this case, the set $\widehat{F}(z)$  is a convex cone as well (this will be proved below in Proposition \ref{cor3.7a}).

It follows from transitivity of the relation $\unlhd$ that $F(y) \subseteq F(z)$ whenever $y \unlhd z$. Evidently, the converse also is true. Thus, for any $y,z \in P_\prec$ the following identities are true:
\begin{equation*}
y \unlhd z \Longleftrightarrow F(y) \subseteq F(z), y \lhd z \Longleftrightarrow F(y) \subsetneq F(z),\,\,\text{\rm and}\,\,y \eq z \Longleftrightarrow F(y) = F(z).
\end{equation*}

The set $G(z):= \{y \in P_\prec \mid z \unlhd y\}$ also is a nonempty convex cone for every $z \in P_\prec$ (this follows from the implications \eqref{e3.5} and \eqref{e3.5a}) and, like the set $F(z)$, it can be used for characterization of $\unlhd$ on $P_\prec$, since for any $y,z \in P_\prec$ the following identities
\begin{equation*}
y \unlhd z \Longleftrightarrow G(y) \supseteq G(z), y \lhd z \Longleftrightarrow G(y) \supsetneq G(z),\,\,\text{\rm and}\,\,y \eq z \Longleftrightarrow G(y) = G(z)
\end{equation*}
hold.

Let $\mathcal{O}(P_\prec):= P_\prec/\eq$ be the set of equivalence classes of $P_\prec$ corresponding to the equivalence relation $\eq$. Clearly, that the positive cone $P_\prec$ is a disjoint union of all equivalence classes from $\mathcal{O}(P_\prec)$, i.e. $P_\prec = \bigcup\{E\mid E\in \mathcal{O}(P_\prec)\}$ and $E_1\bigcap E_2 = \varnothing$ for any $E_1,E_2 \in \mathcal{O}(P_\prec)$.
The equivalence class containing the vector $y$ will be denoted by $E_y$,  $E_y:=\{z \in P_\prec \mid z \eq y\}$.

\begin{proposition}\label{pr3.6}
Each equivalence class $E$ from $\mathcal{O}(P_\prec)$ is a relatively algebraic open convex cone.
\end{proposition}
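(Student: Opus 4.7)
The plan is to verify the three assertions about $E_y$ in order: it is a cone, it is convex, and it equals its own relative algebraic interior.

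\textbf{Cone and convexity.} For an arbitrary $z \in E_y$ and $\alpha > 0$, the equivalences $z \eq y$ and the implication $y \unlhd z \Rightarrow \alpha_1 y \unlhd \alpha_2 z$ in \eqref{e3.5} at once give $\alpha z \eq y$, so $E_y$ is a cone. For closure under addition, if $z_1, z_2 \in E_y$, then $z_1 \unlhd y$ and $z_2 \unlhd y$, and the second implication of \eqref{e3.5} yields $z_1 + z_2 \unlhd y$; symmetrically, $y \unlhd z_1$ and $y \unlhd z_2$ combined with \eqref{e3.5a} give $y \unlhd z_1 + z_2$. Hence $z_1 + z_2 \in E_y$, and together with the cone property this proves that $E_y$ is a convex cone.

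\textbf{Relative algebraic openness.} Since $E_y$ is convex, I will use the simplified characterization: for every $z \in E_y$ and every $y' \in E_y \setminus \{z\}$, exhibit $\delta>0$ with $z + t(y'-z)\in E_y$ for all $t\in(-\delta,\delta)$. The range $t\in[0,1)$ is handled by convexity, so the whole work is the case $t\in(-\delta,0)$, i.e., I must show $w_s := (1+s)z - sy'$ lies in $E_y$ for all sufficiently small $s > 0$. Since $z\eq y'$, there exist constants $\kappa,\lambda>0$ with $\kappa y' \prec z$ and $\lambda z \prec y'$, equivalently $z-\kappa y'\in P_\prec$ and $y'-\lambda z\in P_\prec$.

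\textbf{Membership in $P_\prec$ and in $E_y$.} I plan to write
\[
w_s \;=\; \frac{s}{\kappa}(z-\kappa y') + \Bigl(1+s-\tfrac{s}{\kappa}\Bigr) z,
\]
so that for $s$ small enough that the bracketed scalar is positive, $w_s$ is a positive combination of two vectors in $P_\prec$ and therefore belongs to $P_\prec$. To conclude $w_s\in E_y$, it suffices to show $w_s\eq z$. One direction is cheap: $(1+s)z - w_s = sy' \in P_\prec$, so $w_s \prec (1+s)z$, whence $w_s/(1+s) \prec z$ and $w_s \unlhd z$. The reverse, $z \unlhd w_s$, is the delicate step, and it is where the upper bound on $s$ will be sharpened: using the same decomposition, I intend to verify $\tfrac12 z \prec w_s$ by checking that $w_s - \tfrac12 z$ can be written as $\tfrac{s}{\kappa}(z-\kappa y') + (\tfrac12 + s - \tfrac{s}{\kappa})z$ with a positive coefficient on $z$, which holds whenever $s(1-\kappa^{-1}) < \tfrac12$. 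Taking $\delta$ to be the minimum of $1$ and this threshold, every $w_s$ with $0 < s < \delta$ satisfies $w_s \eq z \eq y$, and thus lies in $E_y$, completing the proof.

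\textbf{Expected obstacle.} The main subtlety is not transitivity or convexity but making sure that both the positivity of $w_s$ and the estimate $\tfrac12 z \prec w_s$ survive uniformly for a common choice of $\delta$; the parameter $\kappa$ coming from $\kappa y' \prec z$ may be small, and the decomposition above has to be balanced so that the coefficient of $z$ remains strictly positive in both arguments.
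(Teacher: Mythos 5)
Your proof is correct in substance, and it reaches the conclusion by a mildly different route than the paper. For convexity the paper simply observes that $E_z = F(z)\bigcap G(z)$ is an intersection of two convex cones, whereas you verify closure under addition and positive scaling directly from \eqref{e3.5} and \eqref{e3.5a} --- the same facts, unpackaged. For relative openness the paper takes an arbitrary direction $y = y_1 - y_2 \in {\rm Lin}(E_z) = {\rm aff}(E_z)$, extracts two-sided bounds $\mu z \prec y_i \prec \nu z$, and sandwiches $(1-|t|(\nu-\mu))z \prec z+ty \prec (1+|t|(\nu-\mu))z$, which handles both signs of $t$ simultaneously with $\delta = 1/(\nu-\mu)$; you instead invoke the paper's convex-set characterization of ${\rm icr}$, dispose of $t \in [0,\delta)$ by convexity, and work only on the backward extension $w_s = (1+s)z - sy'$ via a conic decomposition. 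Both arguments certify membership in $E$ the same way in the end, by trapping the perturbed point between two positive multiples of $z$ (your $w_s/(1+s) \prec z$ and $\frac{1}{2} z \prec w_s$ versus the paper's sandwich); your version buys a shorter case analysis at the price of needing convexity established first, while the paper's version works for an arbitrary direction of the affine hull without that shortcut.

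One slip should be fixed: your stated threshold is reversed. Positivity of the coefficient $\frac{1}{2} + s - \frac{s}{\kappa}$ is equivalent to $s(\kappa^{-1}-1) < \frac{1}{2}$, not to $s(1-\kappa^{-1}) < \frac{1}{2}$. As written, your condition is vacuously satisfied precisely when $\kappa < 1$ --- the case you yourself single out as dangerous --- and it constrains $s$ needlessly when $\kappa > 1$. With the inequality flipped, one may take $\delta = \min\{1,\, \kappa/(2(1-\kappa))\}$ when $\kappa<1$ and $\delta = 1$ otherwise, and the argument closes; so this is a correctable sign slip, not a conceptual gap. Note also that your separate verification of $w_s \in P_\prec$ is redundant: once $\frac{1}{2} z \prec w_s$ is established, $w_s = (w_s - \frac{1}{2} z) + \frac{1}{2} z \in P_\prec + P_\prec \subset P_\prec$, and the same corrected smallness condition on $s$ governs everything.
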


\begin{proof}
Let $E \in {\mathcal{O}}(P_\prec)$ and let $z \in E$. Then $E =E_z := \{y \in P_\prec \mid z \eq y\}$. It is easily seen that $E_z = F(z) \bigcap G(z)$, where $F(z) := \{y \in P_\prec \mid y \unlhd z\}$ and
$G(z):= \{y \in P_\prec \mid z \unlhd y\}$. We have seen above that $F(z)$ and $G(z)$ are convex cones and, consequently, $E_z$ as the intersection of two convex cones is a convex cone too.

Note that for the convex cone $E_z$ its affine hull coincides with the vector subspace ${\rm Lin}(E_z) := E_z-E_z$. To prove that $z \in {\rm icr}E_z$ we need to show that for any $y \in {\rm Lin}(E_z)$ there is a real $\delta >0$ such that $z + ty \in P_\prec$ for all $t \in(-\delta,\delta)$. Choose an arbitrary $y \in {\rm Lin}(E_z)$ and $y_1,y_2 \in E_z$ for which $y = y_1 - y_2$. For $y_1$ and $y_2$ there exist reals $\nu_1 > \mu_1 > 0$, $\nu_2 > \mu_2 > 0$ such that $\mu_1 z \prec y_1 \prec \nu_1 z$ and $\mu_2 z \prec y_2 \prec \nu_2 z$. Setting $0 < \mu \le {\min\{\mu_1,\mu_2\}}$ and $\nu \ge {\max\{\nu_1,\nu_2\}}$ we obtain $\mu z \prec y_i \prec \nu z, i = 1,2,$ with $\mu < \nu$. From these inequalities, using the properties of $\prec$, we conclude that  $-(\nu-\mu)z \prec y_1  - y_2 \prec (\nu-\mu)z$ and, furthermore, that $(1-|t|(\nu-\mu))z \prec z +ty \prec (1+|t|(\nu-\mu))z$ for any real $t$. Since $1-|t|(\nu-\mu) > 0$ and $1+|t|(\nu-\mu) > 0$ for $t \in (-\delta, \delta)$ when $\delta = \displaystyle\frac{1}{\nu-\mu}$, then $z +ty \in E_z$ for all $t \in (-\delta, \delta)$. It proves that $z \in {\rm icr}E$. But $z$ is an arbitrary vector of $E$ and, hence, $E ={\rm icr}E$.
\end{proof}

Elements of $\mathcal{O}(P_\prec)$ will be called \textit{open components} of the positive cone $P_\prec$. As usual in the case of a quotient set, $\mathcal{O}(P_\prec)$ is assumed to be equipped with the partial order $\unlhd^*$ which is defined as follows: $E_1 \unlhd^* E_2$ holds for $E_1,E_2 \in \mathcal{O}(P_\prec)$ if and only if $y_1 \unlhd y_2$ for all (some) $y_1 \in E_1$ and all (some) $y_2 \in E_2$.

\begin{theorem}\label{th3.6}
The set $\mathcal{O}(P_\prec)$ of all open components of the positive cone $P_\prec$ equipped with the partial order $\unlhd^*$ is an upper lattice, that is, the partial ordered set $(\mathcal{O}(P_\prec), \unlhd^*)$ is such that each two-element subset $\{E_1,E_2\} \subset \mathcal{O}(P_\prec)$ has the least upper bound in $(\mathcal{O}(P_\prec), \unlhd^*)$, denoted by $E_1\vee E_2$, and, moreover, the equality $E_y \vee E_z = E_{y+z}$ holds for all $y,z \in P_\prec$.
\end{theorem}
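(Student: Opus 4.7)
The plan is to show directly that for arbitrary $y, z \in P_\prec$ the equivalence class $E_{y+z}$ serves as the least upper bound of $E_y$ and $E_z$ in $(\mathcal{O}(P_\prec), \unlhd^*)$, which simultaneously verifies that any two-element subset has a supremum (proving the upper-lattice property) and establishes the explicit formula $E_y \vee E_z = E_{y+z}$. A preliminary step is to check that $\unlhd^*$ is genuinely a well-defined partial order on the quotient $\mathcal{O}(P_\prec)$, i.e.\ that the ``for all'' and ``for some'' formulations coincide: this follows at once from transitivity of $\unlhd$, because if $y_1 \eq y_1'$, $y_2 \eq y_2'$ and $y_1 \unlhd y_2$, then $y_1' \unlhd y_1 \unlhd y_2 \unlhd y_2'$.

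For the upper-bound half, I would start from the fact that $P_\prec$ is a convex cone, so $y+z \in P_\prec$ and $E_{y+z}$ is meaningful. Since $z \in P_\prec$ means $0 \prec z$, translating by $y$ via the compatibility implication \eqref{e2} yields $y \prec y+z$, and then by the first implication in \eqref{e3.4} we get $y \unlhd y+z$, i.e.\ $E_y \unlhd^* E_{y+z}$. The symmetric argument gives $E_z \unlhd^* E_{y+z}$, so $E_{y+z}$ is an upper bound of $\{E_y, E_z\}$.

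For the ``least'' half, suppose $E_w \in \mathcal{O}(P_\prec)$ is any upper bound of $\{E_y, E_z\}$, so that $y \unlhd w$ and $z \unlhd w$. The second implication of \eqref{e3.5} immediately yields $y+z \unlhd w$, and hence $E_{y+z} \unlhd^* E_w$. This shows $E_{y+z}$ is the least upper bound of $E_y$ and $E_z$, and proves both the lattice property and the claimed formula.

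The proof is thus essentially a bookkeeping exercise on top of the algebraic properties \eqref{e3.4}–\eqref{e3.5a} already established. The one spot requiring a little care is the well-definedness of $\unlhd^*$, but that reduces to transitivity; once that is in hand, the whole argument is a two-line application of the additive compatibility of $\prec$ (giving the upper bound) and the additivity property of $\unlhd$ on the left argument (giving minimality). No separation or interior-theoretic tools are needed for this theorem, which is why it can be presented compactly.
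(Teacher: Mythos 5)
Your proposal is correct and follows essentially the same route as the paper: both halves are the same, showing $E_{y+z}$ is an upper bound via translation/additivity of $\prec$ and then deducing minimality from the additivity of $\unlhd$ in its left argument. The only difference is presentational --- where you cite the second implication of \eqref{e3.5} (which the paper does state as an established fact, so this is legitimate), the paper's proof re-derives that step inline via the explicit choice $\nu = \min\{\lambda,\mu\}/2$ together with convexity of $P_\prec$, and your preliminary check that $\unlhd^*$ is well defined on the quotient is a point the paper leaves to the standard quotient-order construction.
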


\begin{proof}
Let $E_1,E_2 \in \mathcal{O}(P_\prec)$ and let $y \in E_1$ and $z \in E_2$, i.~e., $E_1 =E_y, E_2=E_z$. Since $\lambda y \prec y$ for all $\lambda \in (0,1)$ and $0 \prec z$, then $\lambda y \prec y+z$ for all $\lambda \in (0,1)$ and, consequently, $y \unlhd y+z.$ Similarly, we get $z \unlhd y+z$. Hence, $E_y \unlhd^* E_{y+z}$ and $E_z \unlhd^* E_{y+z}$.

Suppose now that $E_y \unlhd^* E$ and $E_z \unlhd^* E$ for some $E \in \mathcal{O}(P_\prec)$. Let $u \in E$. Then $y \unlhd u$ and $z \unlhd u$ or, equivalently, $\lambda y \prec u$ and $\mu z \prec u$ for some $\lambda >0, \mu >0$. From these inequalities we obtain that $\nu (y+z) \prec u$ for
$\nu = \displaystyle\frac{\min\{\lambda,\mu\}}2$. It proves that $y+z \unlhd u$ and, hence, $E_{y+z} \unlhd^* E$. Thus, $E_{y+z}=E_y \vee E_z$.
\end{proof}

\begin{corollary}\label{cor3.7}
Let $E \in \mathcal{O}(P_\prec)$ be an arbitrary open component of the positive cone $P_\prec$ and let ${\rm Lin}(E)= E-E$ be the linear hull of $E$ (the least vector subspace of $Y$ containing $E$). Then $${\rm Lin}(E)\bigcap P_\prec = \bigcup\{\tilde{E} \in \mathcal{O}(P_\prec) \mid \tilde{E} \unlhd^* E\}.$$
\end{corollary}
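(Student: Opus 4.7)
The plan is to prove the two set-theoretic inclusions separately. The non-trivial direction is $\supseteq$, where I would exploit Theorem~\ref{th3.6} to convert the relation $\unlhd^*$ between open components into a statement about sums landing in $E$; the direction $\subseteq$ then reduces to a short verification using convexity of $P_\prec$.

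For $\supseteq$, suppose $\tilde{E} \in \mathcal{O}(P_\prec)$ satisfies $\tilde{E} \unlhd^* E$, pick $y \in \tilde{E}$, and fix any $z \in E$. By Theorem~\ref{th3.6}, $E_{z+y} = E_z \vee E_y = E \vee \tilde{E}$, and since $\tilde{E} \unlhd^* E$ this least upper bound is $E$ itself. Hence $z + y \in E$, so $y = (z+y) - z$ lies in $E - E = {\rm Lin}(E)$. Combined with $y \in P_\prec$, this gives the desired inclusion.

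For $\subseteq$, take $y \in {\rm Lin}(E) \cap P_\prec$, and using ${\rm Lin}(E) = E - E$ write $y = e_1 - e_2$ with $e_1, e_2 \in E$. Fix $z \in E$. The relation $e_1 \eq z$ (which holds because both are in $E$) yields some $\mu > 0$ with $\mu e_1 \prec z$, i.e.\ $z - \mu e_1 \in P_\prec$. Since also $\mu e_2 \in P_\prec$, convexity of the positive cone $P_\prec$ forces $z - \mu y = (z - \mu e_1) + \mu e_2 \in P_\prec$, that is, $\mu y \prec z$. Therefore $y \unlhd z$, so $E_y \unlhd^* E$ and $y$ belongs to the union on the right-hand side.

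I expect the $\supseteq$ direction to be the main conceptual obstacle, because one must translate the abstract majorization condition $\tilde{E} \unlhd^* E$ into the concrete assertion that $z + y$ stays in the class $E$; once Theorem~\ref{th3.6} is brought to bear, the identification $E_{z+y} = E$ is immediate and $y$ is automatically realised as a difference of two elements of $E$. The converse inclusion is essentially a routine unwinding of the definitions of $\unlhd$ and ${\rm Lin}(E)$ together with the fact that $P_\prec$ is a convex cone.
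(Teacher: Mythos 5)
Your proof is correct. The $\supseteq$ direction is exactly the paper's argument: from $\tilde{E} \unlhd^* E$ you get $E \vee \tilde{E} = E$, apply Theorem~\ref{th3.6} to conclude $z + y \in E$, and realise $y$ as a difference of two elements of $E$. For the $\subseteq$ direction, however, you deviate from the paper: the paper simply applies Theorem~\ref{th3.6} a second time --- writing $y = z - w$ with $z, w \in E$, it notes $z = y + w$, so $E_y \vee E = E_{y+w} = E_z = E$, giving $E_y \unlhd^* E$ in one line --- whereas you unwind the definitions directly, using $e_1 \eq z$ to produce $\mu > 0$ with $z - \mu e_1 \in P_\prec$ and then invoking $P_\prec + P_\prec \subset P_\prec$ to get $z - \mu y \in P_\prec$, i.e.\ $y \unlhd z$. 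Your computation is valid (it needs only that $E \subset P_\prec$, that $E - E = {\rm Lin}(E)$ since $E$ is a convex cone, and the definition of $\unlhd$), and it has the merit of being self-contained at the level of the definitions; but note that it essentially reproves inline the special case of Theorem~\ref{th3.6} being used, namely that $z = y + w$ with $w \in E_z$ forces $y \unlhd z$ --- the same mechanism ($\mu y \prec z$ via convexity) that drives the proof of $E_y \vee E_z = E_{y+z}$. The paper's route is shorter and keeps the lattice identity as the single engine behind both inclusions; yours trades that uniformity for a more elementary verification. One small remark on your closing assessment: in the paper's treatment the two inclusions are of comparable (one-line) difficulty once Theorem~\ref{th3.6} is available, so the $\supseteq$ direction is not really the lone conceptual obstacle you describe.
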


\begin{proof}
Let $\tilde{E},E \in \mathcal{O}(P_\prec)$ be such that $\tilde{E} \unlhd^* E$. Since $\tilde{E} \vee E =E$, then for any $y \in \tilde{E}$ and any $z \in E$ we have $w:= y + z \in E.$ Consequently, $y=w-z \in E-E={\rm Lin}(E)$. It proves that $\bigcup\{\tilde{E} \in \mathcal{O}(P_\prec) \mid \tilde{E} \unlhd^* E\} \subset {\rm Lin}(E)\bigcap P_\prec$.

Conversely, let $y \in {\rm Lin}(E)\bigcap P_\prec$. It follows from $y \in {\rm Lin}(E)$ that $y = z - w$ with $w,z \in E$. Since $z =y + w$, then $E_y\vee E= E$ and, consequently, $E_y \unlhd^* E$. It proves the converse inclusion ${\rm Lin}(E)\bigcap P_\prec \subset \bigcup\{\tilde{E} \in \mathcal{O}(P_\prec) \mid \tilde{E} \unlhd^* E\}$.
\end{proof}

Note that $\bigcup\{\tilde{E} \in \mathcal{O}(P_\prec) \mid \tilde{E} \unlhd^* E\}= F(z)$ for any $z \in E$.

\begin{corollary}\label{cor3.7a}
Let $\mathcal{E}$ be a subfamily of the family of open components $\mathcal{O}(P_\prec)$ of the positive cone $P_\prec$ that is linearly ordered by the relation $\unlhd^*$. For any $E \in \mathcal{E}$ which is not the least element of ${\mathcal{E}}$  the set $\widehat{F}_{\mathcal{E}}(E):= \bigcup\{\tilde{E} \subset \mathcal{E} \mid \tilde{E} \lhd^* E\}$ also is a  convex cone and ${\rm Lin}(\widehat{F}_{\mathcal{E}}(E))\bigcap E = \varnothing$, where $\tilde{E} \lhd^* E \Leftrightarrow \tilde{E} \unlhd^* E, \tilde{E} \ne E$ and ${\rm Lin}(\widehat{F}_{\mathcal{E}}(E))= \widehat{F}_{\mathcal{E}}(E)-\widehat{F}_{\mathcal{E}}(E)$ is the least vector subspace of $Y$ containing $\widehat{F}_{\mathcal{E}}(E)$.
\end{corollary}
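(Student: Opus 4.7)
The plan is to follow the template of Corollary \ref{cor3.7}, with the chain hypothesis on $\mathcal{E}$ substituting for the lattice structure used there and thereby rescuing convexity. Closure of $\widehat{F}_\mathcal{E}(E)$ under positive scalar multiplication is immediate, since each open component $\tilde{E}\in\mathcal{O}(P_\prec)$ is itself a cone by Proposition \ref{pr3.6}. The nontrivial point is closure under addition, and this is precisely where the linear ordering of $\mathcal{E}$ enters; without it, Example \ref{ex3.5} already exhibits a nonconvex $\widehat{F}(z)$.

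For closure under addition, I take $y_1,y_2\in\widehat{F}_\mathcal{E}(E)$, let $\tilde{E}_i\in\mathcal{E}$ be the (unique) open component containing $y_i$, so that $\tilde{E}_i\lhd^* E$ for $i=1,2$, and use the chain hypothesis on $\mathcal{E}$ to assume without loss of generality that $\tilde{E}_1\unlhd^*\tilde{E}_2$. Then in the lattice $\mathcal{O}(P_\prec)$ one has $\tilde{E}_1\vee\tilde{E}_2=\tilde{E}_2$, so Theorem \ref{th3.6} yields
$$
E_{y_1+y_2}=E_{y_1}\vee E_{y_2}=\tilde{E}_1\vee\tilde{E}_2=\tilde{E}_2\lhd^* E,
$$
whence $y_1+y_2\in\tilde{E}_2\subset\widehat{F}_\mathcal{E}(E)$. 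This shows $\widehat{F}_\mathcal{E}(E)$ is a convex cone.

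For the disjointness assertion ${\rm Lin}(\widehat{F}_\mathcal{E}(E))\bigcap E=\varnothing$ I argue by contradiction. Suppose $z\in{\rm Lin}(\widehat{F}_\mathcal{E}(E))\bigcap E$; write $z=y_1-y_2$ with $y_i\in\tilde{E}_i$, where each $\tilde{E}_i\in\mathcal{E}$ satisfies $\tilde{E}_i\lhd^* E$. Rewriting this as $y_1=z+y_2$ and applying Theorem \ref{th3.6} to $z,y_2\in P_\prec$ gives
$$
\tilde{E}_1=E_{y_1}=E_{z+y_2}=E_z\vee E_{y_2}=E\vee\tilde{E}_2=E,
$$
where I have used $E_z=E$ (because $z\in E$) and the fact that $\tilde{E}_2\unlhd^* E$ forces $E\vee\tilde{E}_2=E$. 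This contradicts $\tilde{E}_1\lhd^* E$.

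The main obstacle is purely organisational: one must carefully match each vector with its open component and then recognise each relevant join in $\mathcal{O}(P_\prec)$ as either the larger of the two classes in the chain or as $E$ itself. Once Theorem \ref{th3.6} is in hand both assertions collapse to the observation that a strict $\unlhd^*$-predecessor of $E$ cannot coincide with $E$.
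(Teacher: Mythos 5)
Your proposal is correct and follows essentially the same route as the paper: convexity via Theorem \ref{th3.6} combined with the chain hypothesis (so that $E_{y_1+y_2}=E_{y_1}\vee E_{y_2}$ is just the larger of two comparable components in $\mathcal{E}$), and disjointness by writing $z=y_1-y_2$, observing $y_1=z+y_2$ and computing its component via the join, which the paper phrases as $z+v\in E_z=E$ contradicting $E\bigcap\widehat{F}_{\mathcal{E}}(E)=\varnothing$ while you phrase it as $\tilde{E}_1=E$ contradicting $\tilde{E}_1\lhd^* E$. Your version even makes explicit the appeal to Theorem \ref{th3.6} that the paper's proof leaves implicit (it cites Proposition \ref{pr3.6} at that step), so no gap.
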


\begin{proof}
Since the subfamily $\mathcal{E}$ is linearly ordered be $\unlhd^*$, then for any $y,z \in \widehat{F}_{\mathcal{E}}(E)$ we have either $E_{y+z} = E_y$ or $E_{y+z} = E_z$. Hence, $E_{y+z} \lhd^* E$ and $y+z \in \widehat{F}_{\mathcal{E}}(E)$. The positive homogeneity of $\widehat{F}_{\mathcal{E}}(E)$ follows from the homogeneity of each $E \in \mathcal{E}$. Thus, $\widehat{F}_{\mathcal{E}}(E)$ is a convex cone.

Assume now that ${\rm Lin}(\widehat{F}_{\mathcal{E}}(E))\bigcap E \ne \varnothing$ and let $z \in {\rm Lin}(\widehat{F}_{\mathcal{E}}(E))\cap E$. Through  $z \in {\rm Lin}(\widehat{F}_{\mathcal{E}}(E))$ we have $z = u - v$ with $u,v \in \widehat{F}_{\mathcal{E}}(E)$. It implies that $z+v=u \in \widehat{F}_{\mathcal{E}}(E)$. On the other hand, since $v \lhd z$, it follows from Proposition \ref{pr3.6} that $z+v \in E_z=E$. We get the contradiction to $E\bigcap \widehat{F}_{\mathcal{E}}(E) = \varnothing$. It proves that ${\rm Lin}(\widehat{F}_{\mathcal{E}}(E))\bigcap E = \varnothing$.
\end{proof}

Simple examples show that when the subfamily $\mathcal{E} \subset \mathcal{O}(P_\prec)$ is not linear ordered by $\unlhd^*$, the cone $\widehat{F}_{\mathcal{E}}(E)$ can be non-convex and $E \in \mathcal{E}$ can belong to ${\rm Lin}(\widehat{F}_{\mathcal{E}}(E))$.

\begin{example}[cf. with Example \ref{ex3.5}]
Let $Y={\mathbb{R}}^2$ and $P_\prec:= {\mathbb{R}}^2_+ \setminus \{(0,0)\}$. The set $\mathcal{O}(P_\prec)$ of open components of $P_\prec$ consists of three elements: $E_0:=\{(y_1,y_2)\mid y_1 >0,y_2>0\}$, $E_1:= \{(y_1,y_2)\mid y_1 >0,y_2=0\}$, and $E_2:=\{(y_1,y_2)\mid y_1 =0,y_2>0\}$. The subfamily $\mathcal{E} = \mathcal{O}(P_\prec)$ is not linearly ordered by $\unlhd^*$ and $\widehat{F}_{\mathcal{E}}(E_0)=\bigcup\{E \in \mathcal{E} \mid E \lhd^* E_0\} = E_1\bigcup E_2$ is non-convex cone with $E_0 \subset {\rm Lin}(\widehat{F}_{\mathcal{E}}(E_0)) = {\mathbb{R}}^2$.
\end{example}

\begin{theorem}
Let $(Y,\prec)$ be an ordered vector space. For the set $P_\pprec$ of strong positive vectors of $(Y,\prec)$ to be non-empty, it is necessary and sufficient that the set $\mathcal{O}(P_\prec)$ of open components of its positive cone $P_\prec$ ordered by the partial order $\unlhd^*$ have the greatest element, i.~e., such element $E_{\text{sup}} \in \mathcal{O}(P_\prec)$ that $E \unlhd^* E_{\text{sup}}$ for all $E \in \mathcal{O}(P_\prec)$. Moreover, in this case, $P_\pprec = E_{\text{sup}}$.
\end{theorem}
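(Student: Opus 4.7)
The plan is to unpack the definition of $P_\pprec$ in terms of the majorization relation $\unlhd$ and then translate it to the quotient $\mathcal{O}(P_\prec) = P_\prec/\eq$. By definition, $z \in P_\pprec$ iff for every $y \in P_\prec$ there exists $\mu > 0$ with $\mu y \prec z$, i.e., iff $y \unlhd z$ for all $y \in P_\prec$. Since $\unlhd^*$ on $\mathcal{O}(P_\prec)$ is just the relation induced by $\unlhd$ on representatives, this is the same as saying that $E \unlhd^* E_z$ for every open component $E \in \mathcal{O}(P_\prec)$, i.e., $E_z$ is the greatest element of $(\mathcal{O}(P_\prec), \unlhd^*)$.

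For the necessity direction I would therefore argue: if $P_\pprec \ne \varnothing$ and $z \in P_\pprec$, then by the equivalence above the component $E_z$ is a greatest element, so $E_{\text{sup}} := E_z$ exists; moreover any other $z' \in P_\pprec$ yields another greatest element $E_{z'}$, and since greatest elements of a poset are unique we get $E_{z'} = E_{\text{sup}}$. This simultaneously proves the inclusion $P_\pprec \subset E_{\text{sup}}$.

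For the sufficiency direction and the reverse inclusion, suppose $E_{\text{sup}}$ is the greatest element of $(\mathcal{O}(P_\prec), \unlhd^*)$ and pick any $z \in E_{\text{sup}}$. For every $y \in P_\prec$ the component $E_y$ satisfies $E_y \unlhd^* E_{\text{sup}}$, hence $y \unlhd z$ by the definition of $\unlhd^*$, which gives $\mu y \prec z$ for some $\mu > 0$. Since $y$ was arbitrary, $z \in P_\pprec$; this shows $P_\pprec \ne \varnothing$ and $E_{\text{sup}} \subset P_\pprec$, completing the equality $P_\pprec = E_{\text{sup}}$.

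There is no serious obstacle here: the proof is a direct translation between the pointwise definition of a strongly positive vector and the order-theoretic statement on the quotient $\mathcal{O}(P_\prec)$. The only minor care needed is to justify that $\unlhd^*$ on components really is defined by evaluating $\unlhd$ on any (equivalently, all) representatives, which is part of the setup of $\mathcal{O}(P_\prec)$ already recorded before Theorem \ref{th3.6}, and to invoke uniqueness of a greatest element to establish $P_\pprec \subset E_{\text{sup}}$.
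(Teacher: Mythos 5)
Your proof is correct and takes essentially the same route as the paper, whose entire argument is the observation that $z \in P_\pprec$ if and only if $y \unlhd z$ for all $y \in P_\prec$; you have simply written out in full the translation of this observation to the quotient $(\mathcal{O}(P_\prec), \unlhd^*)$, including the (correct) appeal to uniqueness of a greatest element to get $P_\pprec \subset E_{\text{sup}}$.
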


The proof of this theorem is immediate from the following observation:  a positive vector $y \in P_\prec$ is strong positive if and only if $z \unlhd y$ for all $z \in P_\prec$.

\smallskip

A compatible preference relation $\prec$ defined on a vector space $Y$ will be called \textit{relatively open} if ${\rm icr}P_\prec \ne \varnothing$ and $P_\prec = {\rm icr}P_\prec$ or, equivalently, if $P_\pprec \ne \varnothing$ and $P_\prec = P_\pprec$.

\begin{corollary}
A compatible preference relation $\prec$ defined on a real vector space $Y$ is relatively open if and only if the set $\mathcal{O}(P_\prec)$ of open components of its positive cone $P_\prec$ is a singleton, i.~e., if and only if $\mathcal{O}(P_\prec) = \{{\rm icr}P_\prec\} = \{P_\pprec\}$.
\end{corollary}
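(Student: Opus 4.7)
The plan is to derive this corollary directly from the preceding theorem (on the greatest element of $\mathcal{O}(P_\prec)$) combined with Proposition \ref{pr3.6} (each open component is a relatively algebraically open convex cone) and Theorem \ref{th3.2} ($P_\pprec = {\rm icr}P_\prec$). The equivalence is essentially a matter of unpacking definitions, and I expect no serious obstacle; the argument should fit in two short implications.

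For the forward direction, suppose $\prec$ is relatively open, so $P_\prec = P_\pprec \ne \varnothing$. By the preceding theorem, $\mathcal{O}(P_\prec)$ has a greatest element $E_{\sup}$ and $P_\pprec = E_{\sup}$. Thus $P_\prec = E_{\sup}$, and since the open components partition $P_\prec$ disjointly, $E_{\sup}$ must be the unique element of $\mathcal{O}(P_\prec)$. Theorem \ref{th3.2} then identifies this element with ${\rm icr}P_\prec = P_\pprec$.

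For the converse, suppose $\mathcal{O}(P_\prec) = \{E\}$ for some single equivalence class $E$. Because the equivalence classes of $\eq$ cover $P_\prec$, we have $P_\prec = E$. By Proposition \ref{pr3.6}, $E$ is a relatively algebraically open convex cone, so $P_\prec = E = {\rm icr}E = {\rm icr}P_\prec$, and Theorem \ref{th3.2} gives $P_\prec = P_\pprec \ne \varnothing$ (nonempty because $E$, being an equivalence class, is nonempty). Hence $\prec$ is relatively open and $\mathcal{O}(P_\prec) = \{{\rm icr}P_\prec\} = \{P_\pprec\}$, as desired.

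The main (minor) point to be careful about is to observe that the singleton condition automatically forces the unique class to coincide with all of $P_\prec$ — an immediate consequence of the partition property — and that nonemptiness of $P_\pprec$ in the converse is secured by the nonemptiness of equivalence classes, so no extra hypothesis is needed.
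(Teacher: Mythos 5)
Your proof is correct, and it follows exactly the derivation the paper intends: the corollary is stated without an explicit proof as an immediate consequence of the preceding theorem (greatest element $E_{\text{sup}} = P_\pprec$), Theorem \ref{th3.2} ($P_\pprec = {\rm icr}P_\prec$), and the fact that the open components partition $P_\prec$. Your two careful observations --- that the partition property forces the unique class to equal $P_\prec$, and that nonemptiness of the equivalence class secures $P_\pprec \ne \varnothing$ --- are precisely the points that make the ``immediate'' argument rigorous.
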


Let $Q$ be a convex set in a real vector space $Y$.

A nonempty convex subset $F \subset Q$ is called \textit{a face} of $Q$ (see, for instance, \cite{KhTZ}) if it satisfies the following property: if for some $u,v \in Q$ there exists $\alpha \in (0,1)$ such that $\alpha u + (1-\alpha)v \in F$ then $u,v \in F$. In other words, a nonempty convex subset $F \subset Q$ is \textit{a face} of $Q$ if every segment of $Q$, having in its relative interior an element of $F$, is entirely contained in $F$. The set $Q$ itself is its own face, and the empty set is considered as a face of any convex set $Q$. The face $F$ of $Q$ is said to be \textit{proper}if $F$ is nonempty and does not coincide with $Q$.

It is easy to see that when $Q$ is a convex cone, every face $F$ of $Q$ is a convex cone too.

\begin{theorem}[open components vs. faces]\label{th3.13}
Let $E \in \mathcal{O}(P_\prec)$ be an open component of the positive cone $P_\prec$.  Then the following assertions are true:

a$)$ The set $F(E):=\bigcup\{\tilde{E} \in \mathcal{O}(P_\prec) \mid \tilde{E} \unlhd^* E\}$ is a face of $P_\prec$ whose relative interior ${\rm icr}F(E)$ is not empty and, moreover, ${\rm icr}F(E) = E$.

b$)$ If the family $\mathcal{O}(P_\prec)$ is linearly ordered by $\unlhd^*$, the set $\widehat{F}(E):=\bigcup\{\tilde{E} \in \mathcal{O}(P_\prec) \mid \tilde{E} \lhd^* E\}$ also is a face of $P_\prec$.

\end{theorem}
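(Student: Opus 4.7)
The plan is to handle (a) in two stages---first the face property, then identification of the relative interior---and then obtain (b) by adapting the same argument. For the face property of $F(E)$, Corollary~\ref{cor3.7} rewrites $F(E) = {\rm Lin}(E) \cap P_\prec$, which is automatically convex. To verify the face condition, I would take $u,v \in P_\prec$ and $\alpha \in (0,1)$ with $w := \alpha u + (1-\alpha)v \in F(E)$; the identity $w - \alpha u = (1-\alpha)v \in P_\prec$ gives $\alpha u \prec w$, i.e.\ $u \unlhd w$, and symmetrically $v \unlhd w$. Combined with $w \unlhd z$ (valid for any $z \in E$, since $w \in F(E)$) and transitivity of $\unlhd$, this yields $u,v \in F(z) = F(E)$.

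Next I establish ${\rm icr}F(E) = E$. The affine hull of the convex cone $F(E)$ is its linear hull, which equals ${\rm Lin}(E)$ because $E \subset F(E) \subset {\rm Lin}(E)$ (again by Corollary~\ref{cor3.7}). The inclusion $E \subset {\rm icr}F(E)$ is then immediate from Proposition~\ref{pr3.6}, since $E$ is relatively algebraic open in ${\rm Lin}(E) = {\rm aff}(F(E))$. For the reverse inclusion I use a perturbation argument: given $z \in {\rm icr}F(E)$ and any $w \in E$, the relative-interior condition yields $\delta > 0$ with $(1+\epsilon)z - \epsilon w \in F(E) \subset P_\prec$ for all sufficiently small $\epsilon > 0$. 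Rearranging produces $\frac{1+\epsilon}{\epsilon}z - w \in P_\prec$, that is, $w \prec \frac{1+\epsilon}{\epsilon}z$, so $w \unlhd z$. Combined with $z \unlhd w$ (which holds because $z \in F(E)$), this forces $z \eq w$, and hence $z \in E_w = E$.

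For (b), the convexity of $\widehat{F}(E)$---assuming $E$ is not the minimum of $\mathcal{O}(P_\prec)$---is exactly the content of Corollary~\ref{cor3.7a}, where the linear-ordering hypothesis is genuinely used (see the example that follows that corollary); if $E$ is the least element, then $\widehat{F}(E) = \varnothing$ is a face by convention. For the face property, the computation of (a) carries over almost verbatim: from $w = \alpha u + (1-\alpha)v \in \widehat{F}(E)$ with $u,v \in P_\prec$ I still get $u,v \unlhd w$, and now $w \lhd z$ for every $z \in E$, whence $u,v \unlhd z$. To upgrade to $\lhd$ I argue by contradiction: if $z \unlhd u$ then $z \unlhd u \unlhd w$ contradicts $w \lhd z$, so $u \lhd z$, i.e.\ $u \in \widehat{F}(E)$; the same for $v$. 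The main obstacle I expect is the reverse inclusion ${\rm icr}F(E) \subset E$ in part~(a): converting the purely geometric relative-interior condition into the algebraic majorization $w \unlhd z$ is precisely where the perturbation $(1+\epsilon)z - \epsilon w$ earns its keep.
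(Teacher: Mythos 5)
Your proof is correct, but it diverges from the paper's in instructive ways. For the face property in a$)$, the paper invokes the lattice identity of Theorem~\ref{th3.6} (from $\alpha u \in E_u$, $(1-\alpha)v \in E_v$ it gets $E_w = E_u \vee E_v$, whence $E_u, E_v \unlhd^* E$), whereas you extract $u \unlhd w$ directly from $w - \alpha u = (1-\alpha)v \in P_\prec$ and close with transitivity of $\unlhd$; your version is more elementary and bypasses the sup structure entirely. For $E \subset {\rm icr}F(E)$ the paper reruns a perturbation argument from scratch inside $F(E)$ (producing $y \pm t(z-y) \in F(E)$ for small $t$), while you get it free from Proposition~\ref{pr3.6} via the observation that ${\rm icr}$ is monotone under enlargement \emph{within a fixed affine hull} -- a step that is valid precisely because ${\rm aff}\,E = {\rm aff}\,F(E) = {\rm Lin}(E)$, which you took care to verify; without that equality the monotonicity would fail, so flagging it was essential. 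Your reverse inclusion ${\rm icr}F(E) \subset E$ coincides with the paper's, which silently defers to the computation in Theorem~\ref{th3.2} (your explicit $(1+\epsilon)z - \epsilon w$ is exactly that perturbation made concrete). The genuine difference is in b$)$: the paper writes $\widehat{F}(E) = \bigcup\{F(\tilde{E}) \mid \tilde{E} \lhd^* E\}$ as a union of a linearly ordered family of faces and cites Proposition~2.8 of Mill\'an--Roshchina \cite{Millan} to conclude, whereas you verify the face condition directly, with a clean contradiction upgrading $u \unlhd z$ to $u \lhd z$ (if $z \unlhd u$ then $z \unlhd w$, contradicting $w \lhd z$), and use Corollary~\ref{cor3.7a} only for convexity. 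Your route buys self-containedness -- no external citation -- and it makes visible a structural point the paper leaves implicit: the linear-ordering hypothesis in b$)$ is needed only to secure convexity of $\widehat{F}(E)$, the extremality condition holding regardless, exactly as Example~\ref{ex3.5} illustrates.
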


\begin{proof}
a) Let $E \in \mathcal{O}(P_\prec)$. It is easily seen from Corollary \ref{cor3.7} that the set $F(E)$ is a convex cone. To prove that $F(E)$ is a face of $P_\prec$  we consider an arbitrary $y \in F(E)$ and assume that $y = \alpha u + (1 - \alpha)v$ for some $\alpha \in (0,1)$ and some  $u,v \in P_\prec$. Since $\alpha u \in E_u$ and $(1 - \alpha)v \in E_v$ then due to Theorem \ref{th3.6} $y \in E_{u+v}=E_u \vee E_v \unlhd^* E_y = E$ and, hence, $E_u \unlhd^* E$ and $E_v \unlhd^* E$. Consequently, $u \in E_u \subset F(E)$ and $v \in E_v \subset F(E)$. This proves that $F(E)$ is a face of $P_\prec$.

To prove the second part of the assertion a) we choose an arbitrary $y \in E$ and $z \in F(E)$. From $z \unlhd y$ we have that $y - \mu z \in P_\prec$ for some $\mu > 0$. Since $y - (y - \mu z) = \mu z \in F(E) \subset P_\prec$ then $y - \mu z \unlhd y$ and, hence, $y - \mu z \in F(E)$. Moreover, it is not difficult to see that $y - tz \in F(E)$ for all $t \in (0, \mu)$. From this and $ty \in F(E)\,\,\forall\,\,t>0$ we obtain $y - t(z-y) \in F(E)$ for all $t \in (0,\mu)$. Since both $z$ and $y$ belong to the convex cone $F(E)$ then $y+t(z-y) \in F(E)$ for all $t \in [0,1]$. Hence, $y+t(z-y) \in F(E)$ for all $t \in (-\delta,\delta)$, where $\delta = \min\{\mu,1\}$. Due to the arbitrary choice of the vector $z \in F(E)$ we conclude that $y \in {\rm icr}F(E)$. Hence, ${\rm icr}F(E) \ne \varnothing$ and, moreover, $E \subset {\rm icr}F(E)$.

Now, consider an arbitrary vector $u \in {\rm icr}F(E)$. Arguing as in the proof of Theorem~\ref{th3.2}, we obtain that $y \unlhd u$ for all $y \in F(E)$. From this, it easily follows that for  $z \in E$ we have $u \eq z$ and, hence, $u \in E$. Thus, ${\rm icr}F(E) \subset E$.

b) The case when $\widehat{F}(E) = \varnothing$ is trivial. Suppose that $\widehat{F}(E) \ne \varnothing$. With each  $\tilde{E} \in \mathcal{O}(P_\prec)$ such that $\tilde{E} \lhd^* E$ we associate the set $F(\tilde{E}) = \bigcup\{\hat{E} \in \mathcal{O}(P_\prec) \mid \hat{E} \unlhd^* \tilde{E}\}$ which is, through the assertion $a)$ proved above,  a face of $P_\prec$. It is easy to see that $\widehat{F}(E) = \bigcup\{F(\tilde{E}) \mid \tilde{E} \in \mathcal{O}(P_\prec), \tilde{E} \lhd^* E\}$ and, consequently, $\widehat{F}(E)$ is the union of the linearly ordered family of faces of $P_\prec$. By Proposition 2.8 from \cite{Millan} the set $\widehat{F}(E)$ also is a face of $P_\prec$.
\end{proof}

\begin{example}
Let $Y$ be a infinite-dimensional real vector space and let $\{e_i,i \in I\}$ be a Hamel basis for $Y$. Consider a compatible preference relation $\prec$ on $Y$ the positive cone $P_\prec$ of which is the convex cone consisting of non-zero vectors $y \in Y$ whose coordinates $\{y_i,i\in I\}$ in the given basis $\{e_i,i \in I\}$ are nonnegative. For $y \in P_\prec$ by $I(y)$ we denote the subset of those indices from $I$ for which $y_i > 0$, that is, $I(y) := \{i \in I \mid y_i > 0\}$. The subset $I(y)$ is non-empty and finite for all $y \in P_\prec$. It is not difficult to see that $y \unlhd z \Leftrightarrow I(y) \subseteq I(z)$ and $y \eq z \Leftrightarrow I(y) = I(z)$. We conclude from this that $E$ is an open component of a positive cone $P_\prec$, $E \in {\mathcal{O}}(P_\prec)$, if and only if $E = \{ y \in P_\prec \mid I(y) = J\}$, where $J$ is a some finite subset of $I$. Thus the cardinality of the family of the open components of the positive cone $P_\prec$ is equal to the cardinality of a Hamel basis of the vector space $Y$.
\end{example}

Let $\prec$ be a compatible partial preference and let $\mathcal{O}(P_\prec)$ be the family of open components of the positive cone $P_\prec$. We associate with each open component $E \in \mathcal{O}(P_\prec)$ the compatible partial preference $\prec_E$ on $Y$ such that $y_1 \prec_E y_2$ if and only if $y_2 - y_1 \in E$ and equip
the family $\mathcal{T}(\prec):=\{\prec_E \mid E \in \mathcal{O}(P_\prec)\}$ with the partial order $\unlhd'$ defined as follows: $\prec_{E_1} \unlhd' \prec_{E_2}$, for $\prec_{E_1},\prec_{E_2} \in \mathcal{T}(\prec)$, if and only if ${E_1} \unlhd^* {E_2}$

\begin{theorem}
For each compatible partial preference $\prec$ defined on $Y$ the family $\mathcal{T}(\prec):=\{\prec_E \mid E \in \mathcal{O}(P_\prec)\}$ consists of relatively open compatible partial preferences on $Y$, and with respect to the partial order $\unlhd'$ the family $\mathcal{T}(\prec)$ is an upper lattice with $\sup\{\prec_{E_1},\prec_{E_2}\} = \prec_{E_1 \vee E_2}$. Moreover, $y \prec z$ if and only if there exists $\prec_E \in {\mathcal{T}(\prec)}$ such that $y \prec_E z$.
\end{theorem}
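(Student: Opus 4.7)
The plan is to split the theorem into its three assertions and handle each by transporting structure that has already been built. Claim (i) is that every $\prec_E$ in $\mathcal{T}(\prec)$ is a relatively open compatible partial preference; claim (ii) is the lattice description of $(\mathcal{T}(\prec),\unlhd')$ with the stated formula for suprema; claim (iii) is the pointwise identity $y \prec z \Leftrightarrow \exists\, \prec_E \in \mathcal{T}(\prec)\colon y \prec_E z$.

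For (i), I would note that the positive cone of $\prec_E$ is by construction $E$ itself. Proposition \ref{pr3.6} has already shown that $E$ is a relatively algebraically open convex cone, i.e. $E = {\rm icr}\,E$, and the inclusion $E \subset P_\prec$ transfers asymmetry ($P_\prec \cap (-P_\prec) = \varnothing$) from $P_\prec$ to $E$. Under the one-to-one correspondence between compatible partial preferences on $Y$ and asymmetric convex cones recorded in Section \ref{sec-2.2}, this makes $\prec_E$ a compatible partial preference, and the equality $E = {\rm icr}\,E$ is precisely the relative openness condition in the definition introduced just before this theorem.

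For (ii) I would observe that the assignment $E \mapsto \prec_E$ is by construction a bijection $\mathcal{O}(P_\prec) \to \mathcal{T}(\prec)$, and that the very definition of $\unlhd'$ makes it an order isomorphism $(\mathcal{O}(P_\prec),\unlhd^*) \cong (\mathcal{T}(\prec),\unlhd')$. Theorem \ref{th3.6} supplies the upper-lattice structure on the left with $\sup\{E_1,E_2\} = E_1 \vee E_2$, and transporting through the isomorphism immediately yields $\sup\{\prec_{E_1},\prec_{E_2}\} = \prec_{E_1 \vee E_2}$. For (iii), since $\mathcal{O}(P_\prec) = P_\prec / \eq$ is the partition of $P_\prec$ into $\eq$-equivalence classes, $P_\prec$ is the disjoint union of its open components; hence $y \prec z \Leftrightarrow z-y \in P_\prec \Leftrightarrow z-y \in E$ for a (unique) $E \in \mathcal{O}(P_\prec) \Leftrightarrow y \prec_E z$ for that $E$.

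I do not anticipate any genuine obstacle: the theorem is essentially a repackaging that reinterprets the already-established upper lattice $(\mathcal{O}(P_\prec),\unlhd^*)$ of open components of the positive cone as an isomorphic upper lattice of relatively open compatible partial preferences whose disjoint union recovers $\prec$. The only thing to be careful about is to explicitly invoke the correspondence of Section \ref{sec-2.2} in step (i) and to verify, in step (iii), that \emph{some} $E$ exists for each $z-y \in P_\prec$, which is immediate from the fact that $\eq$ is defined on all of $P_\prec$.
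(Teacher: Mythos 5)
Your proposal is correct and follows essentially the same route as the paper: the paper likewise obtains the lattice claim by transporting Theorem~3.6 through the order isomorphism $(\mathcal{O}(P_\prec),\unlhd^*)\cong(\mathcal{T}(\prec),\unlhd')$ and the final equivalence from $P_\prec=\bigcup\{E\mid E\in\mathcal{O}(P_\prec)\}$ together with the identity $y\prec z\Longleftrightarrow z-y\in P_\prec$. Your step (i), invoking Proposition~3.6 and the cone--preference correspondence of Section~2.2, merely makes explicit what the paper leaves implicit, so there is nothing to object to.
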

\begin{proof}
The first claim immediately follows from Theorem~\ref{th3.6} since $(\mathcal{T}(\prec),\unlhd')$ is lattice isomorphic to $(\mathcal{O}(P_\prec),\unlhd^*)$. The last claim is a corollary of the equality $P_\prec = \bigcup\{E \mid E \in (\mathcal{O}(P_\prec)$ and the identity \eqref{e3}.
\end{proof}

Thus, the internal structure of a compatible partial preference relation $\prec$ is characterized by the structure of the upper lattice $(\mathcal{T}(\prec), \unlhd')$, elements of which are relatively open partial preference relations.

\section{Internal structure of compatible weak preference}

Unless otherwise stated, throughout this section we suppose that $\prec$ is a compatible weak preference defined on a real vector space $Y$. Recall, that a compatible partial preference $\prec$ is a compatible weak preference on $Y$ if and only if $Y = (-P_\prec)\cup L_\prec \cup P_\prec$, where $P_\prec$ is the positive cone of $\prec$ and $L_\prec$ is the vector space associated with $P_\prec$. Another equivalent characterization of a compatible weak preference $\prec$ is that $P_\prec$ is an asymmetric conical halfspace in $Y$. As above, the symbol $\unlhd$ denotes the majorization relation on the positive cone $P_\prec$ and $\mathcal{O}(P_\prec)$ is the family of open components of $P_\prec$ ordered by the partial order $\unlhd^*$ which is the factorization of $\unlhd$ by $\eq$.

\begin{proposition}
Let $\prec$ be a compatible weak preference on a real vector space $Y$.
Then the majorization relation $\unlhd$ defined on $P_\prec$ by $\prec$ is total.
\end{proposition}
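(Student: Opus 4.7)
My plan is to exploit the defining feature of a compatible weak preference, namely that its positive cone $P_\prec$ is an asymmetric conical halfspace; equivalently, $Y$ decomposes as a disjoint union $Y = P_\prec \cup L_\prec \cup (-P_\prec)$. Given arbitrary $y, z \in P_\prec$, I would apply this trichotomy to the difference $z - y$ and split into three cases.

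The first two cases follow directly from the correspondence \eqref{e3}. If $z - y \in P_\prec$, then $y \prec z$, and the definition of majorization gives $y \unlhd z$ simply by taking $\mu = 1$. If $z - y \in -P_\prec$, i.e., $y - z \in P_\prec$, then symmetrically $z \prec y$ and $z \unlhd y$.

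The third case, $z - y \in L_\prec$, is the one I expect to require a small trick. Here $y \sim z$ by \eqref{e3a}, so there is no strict $\prec$-inequality between $y$ and $z$ themselves, yet the definition of $\unlhd$ demands a strict $\prec$. My plan is to perturb the coefficient of $y$ slightly below one: for any $\mu \in (0,1)$ I would write
$$
z - \mu y \;=\; (z - y) + (1 - \mu)\, y,
$$
where the first summand lies in $L_\prec$ and the second in $P_\prec$. Since $L_\prec$ is by definition the largest subspace with $L_\prec + P_\prec = P_\prec$, the sum lies in $P_\prec$; hence $\mu y \prec z$, and therefore $y \unlhd z$ (by symmetry also $z \unlhd y$, so in fact $y \eq z$).

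The only real obstacle is case three: one cannot take $\mu = 1$ when $z - y \in L_\prec$, and one must recall the characteristic property $L_\prec + P_\prec \subset P_\prec$ of the associated subspace in order to absorb the $L_\prec$-component after the perturbation. Once this is noted, the whole argument reduces to a straightforward three-way case split on the halfspace trichotomy provided by the weak preference.
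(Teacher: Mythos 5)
Your proof is correct and takes essentially the same approach as the paper: the paper applies the trichotomy $y \prec z$, $z \prec y$, $y \sim z$ for $y,z \in P_\prec$ (which is exactly your halfspace decomposition of $z-y$ into $P_\prec$, $-P_\prec$, $L_\prec$ via \eqref{e3} and \eqref{e3a}) and concludes by citing the implications \eqref{e3.4}. The only difference is that your third case spells out an explicit proof of the implication $y \sim z \Longrightarrow y \eq z$ --- which the paper had asserted earlier without proof --- using the absorption property $L_\prec + P_\prec \subset P_\prec$, a welcome but inessential addition.
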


\begin{proof}
Since $\prec$ is a weak preference on $Y$, one of the following three alternatives, $y \prec z$, $z \prec y$, $y \sim z$, holds for any $y,z \in P_\prec$.  Consequently, the assertion follows from the implications \eqref{e3.4}.
\end{proof}

\begin{corollary}
For a compatible weak preference $\prec$ the set $\mathcal{O}(P_\prec)$ of open components of $P_\prec$ is linearly ordered by $\unlhd^*$.
\end{corollary}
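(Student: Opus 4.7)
The plan is to derive this corollary as an immediate consequence of the preceding proposition, which asserts that the majorization relation $\unlhd$ on $P_\prec$ is total whenever $\prec$ is a compatible weak preference. The basic idea is that totality of the relation on representatives transfers to totality on the quotient $\mathcal{O}(P_\prec) = P_\prec/\eq$, because the partial order $\unlhd^*$ on the quotient is defined precisely through representatives.

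First I would fix two arbitrary open components $E_1, E_2 \in \mathcal{O}(P_\prec)$ and choose representatives $y_1 \in E_1$ and $y_2 \in E_2$. By the preceding proposition, $\unlhd$ is total on $P_\prec$, so at least one of the alternatives $y_1 \unlhd y_2$ or $y_2 \unlhd y_1$ must hold. By the definition of $\unlhd^*$ on $\mathcal{O}(P_\prec)$ (which holds for some, equivalently for all, representatives of the equivalence classes), this translates directly into either $E_1 \unlhd^* E_2$ or $E_2 \unlhd^* E_1$.

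Since $\unlhd^*$ is already known to be a partial order on $\mathcal{O}(P_\prec)$ (in particular antisymmetric on the quotient, by construction), the totality established above is exactly what is meant by a linear order. There is no real obstacle here: the only step worth spelling out is the invocation of the ``for all (some)'' clause in the definition of $\unlhd^*$, guaranteeing that the comparison of representatives $y_1, y_2$ is in fact independent of the choices and therefore yields a comparison of the classes $E_1, E_2$ themselves. Hence the corollary follows.
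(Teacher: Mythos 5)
Your proof is correct and follows essentially the same route as the paper: the paper likewise deduces linearity from the weak-preference trichotomy (the content of the immediately preceding proposition on totality of $\unlhd$), using the implications \eqref{e3.4} to exclude $y \sim z$ for representatives of distinct components, which is exactly the fact you invoke in packaged form when you let totality of $\unlhd$ descend to the quotient via the ``for all (some)'' clause. The only cosmetic difference is that the paper directly concludes $E_1 \lhd^* E_2$ or $E_2 \lhd^* E_1$ for $E_1 \ne E_2$, whereas you state totality of $\unlhd^*$ and appeal to antisymmetry of the quotient order --- equivalent formulations of the same conclusion.
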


\begin{proof}
Let $E_1,E_2 \in \mathcal{O}(P_\prec)$ be such that $E_1 \ne E_2$. For $y \in E_1,z \in E_2$ the alternative $y \sim z$ is impossible because in this case, through \eqref{e3.4}, we would have $y \eq z$, which contradicts $E_1 \ne E_2$. Thus, we have either $y \prec z$ or $z \prec y$ and, consequently, since $E_1 \ne E_2$, either $E_1 \lhd^* E_2$ or $E_2 \lhd^* E_1$.
\end{proof}

\begin{proposition}\label{pr4.1a}
Let $\prec$ is a compatible weak preference on $Y$, $P_\prec$ its positive cone. Then for any $y,z \in P_\prec$ the following implication
$$
y \lhd z \Longrightarrow y \prec z.
$$
holds.
\end{proposition}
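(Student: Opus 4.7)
The plan is a short proof by trichotomy. Since $\prec$ is a compatible weak preference on $Y$, for any two vectors $y,z \in Y$ exactly one of the three alternatives $y \prec z$, $z \prec y$, or $y \sim z$ holds (this follows from the decomposition $Y = (-P_\prec) \cup L_\prec \cup P_\prec$ together with asymmetry of $\prec$ and the identity \eqref{e3a}). I will assume $y \lhd z$ and rule out the last two alternatives, leaving $y \prec z$ as the only possibility.

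First, suppose toward a contradiction that $z \prec y$. Then the first implication of \eqref{e3.4} gives $z \unlhd y$, which directly contradicts the hypothesis $y \lhd z$ (the latter means $y \unlhd z$ and $z \not\unlhd y$). Next, suppose $y \sim z$. Then the second implication of \eqref{e3.4} gives $y \eq z$, i.e., both $y \unlhd z$ and $z \unlhd y$, again contradicting $z \not\unlhd y$. Therefore the only remaining alternative is $y \prec z$, which is what we wanted.

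There is essentially no obstacle here: the entire argument is a three-way case split powered by the trichotomy available for weak preferences together with the already-established implications in \eqref{e3.4}. The only point that warrants verification is the trichotomy itself, but it is explicit in Section~2 of the paper (a compatible weak preference is characterized by $P_\prec$ being an asymmetric conical halfspace, equivalently $Y = (-P_\prec) \cup L_\prec \cup P_\prec$ with the three pieces pairwise disjoint after removing $0$), so it can be invoked without further comment.
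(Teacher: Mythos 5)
Your proof is correct and follows exactly the paper's own argument: the paper likewise invokes the trichotomy (exactly one of $y \prec z$, $z \prec y$, $y \sim z$) and rules out the latter two alternatives via the implications \eqref{e3.4}, contradicting $y \lhd z$. Your version merely spells out the contradiction with $z \ntrianglelefteq y$ in slightly more detail, which is fine.
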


\begin{proof}
When $\prec$ is a weak preference on $Y$, for every pair $y,z \in Y$ exactly one of
$y \prec z,\,\,z \prec y$ or $y \sim z,$ holds.
But, through the inclusions \eqref{e3.4}, both $z \prec y$ and $y \sim z$
are impossible, since $y \lhd z$.
\end{proof}

\begin{proposition}\label{pr4.2a}
Let $\prec$ is a compatible weak preference on $Y$, $P_\prec$ its positive cone. Then for every open component $E \in {\mathcal O}(P_\prec)$ the inclusions
$$
L_\prec \subset L_E \subset {\rm Lin}(E)
$$
hold
\end{proposition}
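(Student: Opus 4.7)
The second inclusion is free: it is precisely the general fact proved at the end of Section 2.2 that for any convex cone $K \subset Y$ one has $L_K \subset {\rm Lin}(K)$, applied to the convex cone $E$ (which is a convex cone by Proposition \ref{pr3.6}). So the only real task is to establish the first inclusion $L_\prec \subset L_E$.

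For this, I would fix an arbitrary $h \in L_\prec$ and an arbitrary $y \in E$, and show that $y + t h \in E$ for every $t \in {\mathbb R}$. The idea is to pass through the equipotency relation $\sim$ and its restriction $\eq$ on $P_\prec$. First, since $h \in L_\prec$ and $y \in E \subset P_\prec$, the definition of $L_\prec = L_{P_\prec}$ gives $y + t h \in P_\prec$ for every $t \in {\mathbb R}$. Second, $L_\prec$ is a vector subspace, so $t h \in L_\prec$ for every $t$, and then the characterization \eqref{e3a} yields $y \sim y + t h$. Finally, invoking the implication $y \sim z \Longrightarrow y \eq z$ from \eqref{e3.4}, we obtain $y \eq y + t h$, i.e., $y + t h$ belongs to the same $\eq$-equivalence class $E_y = E$ as $y$. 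Hence $y + t h \in E$ for all $y \in E$ and all $t \in {\mathbb R}$, which is exactly the defining condition for $h \in L_E$.

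There is essentially no obstacle in this argument: it is a direct chain of four already-established facts (the defining property of $L_\prec$, the subspace property of $L_\prec$, the identity \eqref{e3a}, and the implication \eqref{e3.4}), together with the general inclusion $L_K \subset {\rm Lin}(K)$ from Section 2.2. I note that the hypothesis that $\prec$ is a weak preference is not actually needed for this particular proposition; the argument works verbatim for any compatible partial preference, and the statement is simply placed here because it will be used throughout the analysis of weak preferences in the sequel.
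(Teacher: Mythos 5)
Your proof is correct, but it takes a genuinely different route from the paper's. The paper proves $L_\prec \subset L_E$ by contradiction: assuming $\bar{y}+\bar{\lambda}h$ lands in a component $\tilde{E} \ne E$, it invokes the linear ordering of $\mathcal{O}(P_\prec)$ by $\unlhd^*$ to get the dichotomy $\tilde{E} \lhd^* E$ or $E \lhd^* \tilde{E}$, then applies Proposition \ref{pr4.1a} ($y \lhd z \Longrightarrow y \prec z$) in each case to conclude $\mp\bar{\lambda}h \in P_\prec$, contradicting $P_\prec \cap L_\prec = \varnothing$. Both of those ingredients --- the totality of $\unlhd^*$ and Proposition \ref{pr4.1a} --- depend essentially on $\prec$ being a \emph{weak} preference. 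Your argument avoids them entirely: from $h \in L_\prec$ you get $y+th \in P_\prec$ and $th \in L_\prec$, hence $y \sim y+th$ by \eqref{e3a}, hence $y \eq y+th$ by the second implication in \eqref{e3.4} (legitimately applicable since both vectors lie in $P_\prec$), so $y+th$ stays in the component $E_y = E$. This is a direct, shorter chain of facts already established in Sections 2 and 3 for \emph{arbitrary} compatible partial preferences, and it validates your closing observation: the proposition indeed holds without the weak-preference hypothesis, which the paper's own proof does not reveal. What the paper's approach buys instead is a self-contained illustration of how the order-theoretic machinery of Section 4 (totality of $\unlhd$, $\lhd$ versus $\prec$) operates, which it then reuses in subsequent results; what yours buys is greater generality and economy.
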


\begin{proof}
It was shown in Subsection \ref{sec-2.2} that $L_K \subset {\rm Lin}(K)$ for any convex cone $K$. Thus, $L_E \subset {\rm Lin}(E)$.

Now, let $h \in L_\prec$. By the definition of $L_\prec$, we have
$$
y+th \in P_\prec\,\,\text{for all}\,\,y \in P_\prec\,\,\text{and all}\,\,t \in {\mathbb{R}}.
$$
Evidently,
$$
y+th \in P_\prec\,\,\text{for all}\,\,y \in E\,\,\text{and all}\,\,t \in {\mathbb{R}}.
$$
To prove the inclusion $L_\prec \subset L_E$ we need to show that actually
\begin{equation}\label{eA5}
y+th \in E\,\,\text{for all}\,\,y \in E\,\,\text{and all}\,\,t \in {\mathbb{R}}.
\end{equation}
To justify this, we argue by contradiction and suppose that there exist $\bar{y} \in E$ and $\bar{\lambda}\in {\mathbb{R}}$ such that $\bar{y}+\bar{\lambda}h \in \tilde{E}$ where $\tilde{E} \in {\mathcal O}(P_\prec),\,\tilde{E} \ne E$. Since ${\mathcal O}(P_\prec)$ is linearly ordered by $\lhd^*$, we have either $\tilde{E} \lhd^* E$ or $E \lhd^* \tilde{E}$. Assume that $\tilde{E} \lhd^* E$, then $\bar{y}+\bar{\lambda}h \lhd
\bar{y}$ and, through Proposition \ref{pr4.1a}, $\bar{y}+\bar{\lambda}h \prec \bar{y}$. Consequently, $\bar{y}-(\bar{y} +\bar{\lambda}h)= -\bar{\lambda}h \in P_\prec$. But this contradicts $P_\prec \cap L_\prec = \varnothing$. In the case $E \lhd^* \tilde{E}$ we would have $\bar{y} \lhd \bar{y}+\bar{\lambda}h$ and then $\bar{y} \prec \bar{y}+\bar{\lambda}h$. This implies $\bar{\lambda}h \in P_\prec$ that again contradicts $P_\prec \cap L_\prec = \varnothing$. Thus, any $h \in L_\prec$ satisfies \eqref{eA5} and, hence, $L_\prec \subset L_E$.
\end{proof}

\begin{theorem}\label{th4.3}
Let $\prec$ be a compatible weak preference on a real vector space $Y$. Then

1) for each open component  $E \in {\mathcal{O}}(P_\prec)$ the following equality holds:
\begin{equation}\label{e4.9a}
{\rm Lin}(E) = (-F(E))\cup L_\prec \cup F(E)
\end{equation}
where $F(E) := \bigcup\{\tilde{E} \in {\mathcal{O}}(P_\prec)\mid \tilde{E} \unlhd^* E\}$ and $L_\prec$ stands for the vector space associated with the positive cone $P_\prec$;

2) ${\rm Lin}(E_1) \subsetneq {\rm Lin}(E_2)$ for any $E_1,E_2 \in {\mathcal{O}}(P_\prec)$ such that $E_1 \lhd^* E_2$;

3) $Y =\bigcup \{{\rm Lin}(E)\mid E \in {\mathcal{O}}(P_\prec)\}.$
\end{theorem}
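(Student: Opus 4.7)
The plan is to derive all three parts from three ingredients already available: the disjoint decomposition $Y = (-P_\prec) \cup L_\prec \cup P_\prec$ characterizing a compatible weak preference; Corollary~\ref{cor3.7}, which yields ${\rm Lin}(E) \cap P_\prec = F(E)$; and Proposition~\ref{pr4.2a}, which gives $L_\prec \subset L_E \subset {\rm Lin}(E)$ for every $E \in \mathcal{O}(P_\prec)$. With these in hand, each part reduces to case analysis on where a vector sits in the weak-preference decomposition of $Y$.

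For part~1, the inclusion $(-F(E)) \cup L_\prec \cup F(E) \subset {\rm Lin}(E)$ is immediate: $F(E) \subset {\rm Lin}(E)$ by Corollary~\ref{cor3.7}, so $-F(E) \subset {\rm Lin}(E)$ as well since ${\rm Lin}(E)$ is a subspace, while $L_\prec \subset {\rm Lin}(E)$ is exactly Proposition~\ref{pr4.2a}. For the reverse inclusion I would take any $y \in {\rm Lin}(E)$ and use the weak-preference trichotomy: if $y \in P_\prec$, Corollary~\ref{cor3.7} places $y$ in $F(E)$; if $y \in -P_\prec$, the same reasoning applied to $-y$ places $y$ in $-F(E)$; and $y \in L_\prec$ already sits on the right-hand side.

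For part~2, the inclusion ${\rm Lin}(E_1) \subset {\rm Lin}(E_2)$ follows from part~1 together with $F(E_1) \subset F(E_2)$, which comes from transitivity of $\unlhd^*$. To obtain strictness I would pick any $z \in E_2$ and check that $z \notin {\rm Lin}(E_1)$: using part~1 for $E_1$, the cases $z \in -F(E_1)$ and $z \in L_\prec$ are ruled out because $z \in P_\prec$ is disjoint from both $-P_\prec$ and $L_\prec$, while $z \in F(E_1)$ would give some $\tilde{E} \in \mathcal{O}(P_\prec)$ with $\tilde{E} \unlhd^* E_1$ and $z \in \tilde{E}$; since $z$ also lies in the equivalence class $E_2$, we would be forced to have $\tilde{E} = E_2$, whence $E_2 \unlhd^* E_1$, contradicting $E_1 \lhd^* E_2$.

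Part~3 again uses the decomposition $Y = (-P_\prec) \cup L_\prec \cup P_\prec$: each $y \in P_\prec$ lies in ${\rm Lin}(E_y)$ via its own equivalence class, each $y \in -P_\prec$ lies in ${\rm Lin}(E_{-y})$ since $-y \in E_{-y}$, and for $y \in L_\prec$ Proposition~\ref{pr4.2a} places $y$ in ${\rm Lin}(E)$ for any open component $E$ (the assertion is vacuous in the degenerate case $P_\prec = \varnothing$). The only genuine obstacle is the case analysis in part~2: one must recognize that the $F(E_1)$ alternative is blocked not by a subspace-inclusion argument but by the rigidity of the $\eq$-equivalence-class structure, which forces $\tilde{E} = E_2$.
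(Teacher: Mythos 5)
Your proof is correct and takes essentially the same route as the paper's: part 1 via the trichotomy $Y=(-P_\prec)\cup L_\prec\cup P_\prec$ combined with Corollary~\ref{cor3.7} and the inclusion $L_\prec\subset{\rm Lin}(E)$ from Proposition~\ref{pr4.2a}, part 2 via $F(E_1)\subsetneq F(E_2)$ with strictness enforced by the disjointness in the decomposition of part 1 (your explicit case analysis with $z\in E_2$ and the forced identification $\tilde{E}=E_2$ just spells out what the paper leaves implicit), and part 3 via the same trichotomy. One small quibble: in the degenerate case $P_\prec=\varnothing$ assertion 3) is not vacuous but false as written (the union over the empty family $\mathcal{O}(P_\prec)$ is empty while $Y\neq\varnothing$), so the correct reading is that the theorem, like the paper, tacitly assumes $P_\prec\neq\varnothing$.
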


\begin{proof}
1) Since $Y=(-P_\prec)\cup L_\prec \cup P_\prec$ with $(-P_\prec) \cap P_\prec = \varnothing$ and $P_\prec \cap L_\prec = \varnothing$, then
${\rm Lin}(E)= {\rm Lin}(E)\cap Y = ({\rm Lin}(E)\cap (-P_\prec)) \cup ({\rm Lin}(E)\cap L_\prec)\cup ({\rm Lin}(E)\cap P_\prec)$. Now, the equality under proving follows from Corollary \ref{cor3.7}  and the inclusion $L_\prec \subset{\rm Lin}(E)$.

2) It follows immediately from the definition of the set $F(E)$ that $F(E_1) \subsetneq F(E_2)$ holds for any $E_1,E_2 \in {\mathcal{O}}(P_\prec)$ such that $E_1 \lhd^* E_2$. Thus, the inclusion we need to prove is an obvious consequence of the equality proved in 1).

3) To prove this equality we observe that each $y \in Y$ belongs either $L_\prec$ or $F(E)\cup (-F(E))$ for some $E \in {\mathcal{O}}(P_\prec)$.
\end{proof}

\begin{corollary}
Let $\prec$ be a compatible weak preference on a real vector space $Y$, and let $E_{inf}$ be the least element of the linearly ordered set $({\mathcal{O}}(P_\prec), \unlhd^*)$ then $${\rm Lin}(E_{inf}) = (-E_{inf})\cup L_\prec \cup E_{inf},$$
and, in addition, $L_{E_{inf}} = L_\prec$.
\end{corollary}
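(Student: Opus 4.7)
The first assertion is a direct specialization of Theorem~\ref{th4.3}(1). Because $E_{inf}$ is the least element of $({\mathcal O}(P_\prec), \unlhd^*)$, the only $\tilde{E} \in {\mathcal O}(P_\prec)$ satisfying $\tilde{E} \unlhd^* E_{inf}$ is $E_{inf}$ itself; hence $F(E_{inf}) = E_{inf}$, and \eqref{e4.9a} collapses to ${\rm Lin}(E_{inf}) = (-E_{inf})\cup L_\prec \cup E_{inf}$.

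For the equality $L_{E_{inf}} = L_\prec$, the inclusion $L_\prec \subset L_{E_{inf}}$ is already furnished by Proposition~\ref{pr4.2a}, so the remaining task is the reverse inclusion. Take an arbitrary $h \in L_{E_{inf}}$. Since $L_K \subset {\rm Lin}(K)$ holds for every convex cone $K$, we have $h \in {\rm Lin}(E_{inf})$, and by the first assertion this forces $h \in E_{inf} \cup L_\prec \cup (-E_{inf})$. The plan is to rule out the two extreme options, leaving only $h \in L_\prec$. Using the general identity $L_K = L_{-K}$ we have $L_{E_{inf}} = L_{-E_{inf}}$, so the case $h \in -E_{inf}$ reduces by symmetry to $h \in E_{inf}$.

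The main obstacle is thus excluding $h \in E_{inf}$, and this is where the weak preference hypothesis ultimately enters (through part~1, which relies on $Y = (-P_\prec)\cup L_\prec \cup P_\prec$). Assume $h \in E_{inf}$ and fix any $y_0 \in E_{inf}$. Since $h \in L_{E_{inf}}$, the line $\{y_0 + th \mid t \in {\mathbb{R}}\}$ stays inside $E_{inf} \subset P_\prec$; specialising $t = -\mu$ with $\mu > 0$ arbitrary gives $y_0 - \mu h \in P_\prec$, that is, $\mu h \prec y_0$ for every $\mu > 0$, or equivalently $h \prec \varepsilon y_0$ for every $\varepsilon > 0$. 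On the other hand, because $h$ and $y_0$ lie in the same equivalence class $E_{inf}$ we have $h \eq y_0$, so there exists $\nu > 0$ with $\nu y_0 \prec h$. Taking $\varepsilon := \nu/2$ and invoking transitivity of $\prec$ produces $\nu y_0 \prec (\nu/2)\,y_0$, which upon translation yields $-(\nu/2)\,y_0 \in P_\prec$; this contradicts $(\nu/2)\,y_0 \in P_\prec$ together with $P_\prec \cap (-P_\prec) = \varnothing$. Hence $h \notin E_{inf}$, the symmetric argument disposes of $h \in -E_{inf}$, and we conclude $h \in L_\prec$.
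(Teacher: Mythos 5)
Your proof is correct and takes essentially the paper's route: the first equality is obtained, exactly as in the paper, by specializing Theorem~\ref{th4.3}(1) via $F(E_{inf})=E_{inf}$, and the second by trapping $L_{E_{inf}}$ inside the resulting decomposition of ${\rm Lin}(E_{inf})$ and combining with the inclusion $L_\prec \subset L_{E_{inf}}$ from Proposition~\ref{pr4.2a}. The only deviation is that where the paper simply cites the Section~2 fact that an asymmetric convex cone $K$ satisfies $K\cap L_K=\varnothing$, you re-derive the disjointness $L_{E_{inf}}\cap(\pm E_{inf})=\varnothing$ by hand through the majorization relation --- valid, but heavier than needed, since $h\in E_{inf}\cap L_{E_{inf}}$ would directly give $-h=h+(-2)h\in E_{inf}$, contradicting the asymmetry of $E_{inf}\subset P_\prec$.
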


\begin{proof}
The first equality we need to prove follows from the assertion 1) of the above proposition, since $F(E)=E$ when $E=E_{inf}$ is the least element of $({\mathcal{O}}(P_\prec), \unlhd^*)$.

Further, since $E \cap L_E = \varnothing$ and $L_E \subset {\rm Lin}(E)$ for all $E \in {\mathcal{O}}(P_\prec)$, it follows from the equality we just proved that $L_{E_{inf}} \subset L_\prec$. But, through Proposition \ref{pr4.2a}, we have $L_\prec \subset L_{E_{inf}}$ and, hence, $L_{E_{inf}} = L_\prec$.
\end{proof}

\begin{proposition}
Let $\prec$ be a compatible weak preference on a real vector space $Y$ and let $E \in {\mathcal{O}}(P_\prec)$ be an open component of its positive cone $P_\prec$. Suppose that $E$ is not the least element of the linearly ordered set $({\mathcal{O}}(P_\prec), \unlhd^*)$,
then the linear hull of the convex cone $\widehat{F}(E) := \bigcup\{\tilde{E} \in {\mathcal{O}}(P_\prec)\mid \tilde{E} \lhd^* E\}$ can be presented as follows:
\begin{equation}\label{e4.9}
{\rm Lin}(\widehat{F}(E))= (-\widehat{F}(E))\cup L_\prec \cup \widehat{F}(E),
\end{equation}
and
\begin{equation}\label{e4.10}
{\rm Lin}(E)= (-E)\cup {\rm Lin}(\widehat{F}(E))\cup E.
\end{equation}
\end{proposition}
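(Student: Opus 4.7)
The plan is to establish \eqref{e4.9} first and then deduce \eqref{e4.10} by combining it with the decomposition of ${\rm Lin}(E)$ already proved in Theorem~\ref{th4.3}(1).

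For \eqref{e4.9} I would argue by mutual inclusion. The inclusion $(-\widehat{F}(E))\cup L_\prec \cup \widehat{F}(E)\subset {\rm Lin}(\widehat{F}(E))$ is straightforward: $\pm\widehat{F}(E)\subset {\rm Lin}(\widehat{F}(E))$ trivially, and since $E$ is not the least element of $(\mathcal{O}(P_\prec),\unlhd^*)$ we can pick some $\tilde{E}\lhd^* E$, for which Proposition~\ref{pr4.2a} yields $L_\prec\subset L_{\tilde{E}}\subset {\rm Lin}(\tilde{E})\subset {\rm Lin}(\widehat{F}(E))$. For the reverse inclusion, take $y\in {\rm Lin}(\widehat{F}(E))$ and use the trichotomy $Y=(-P_\prec)\cup L_\prec\cup P_\prec$ coming from the fact that $\prec$ is a weak preference. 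The case $y\in L_\prec$ is immediate, and the case $y\in -P_\prec$ reduces by symmetry to the $y\in P_\prec$ case. For the latter, I would first observe $\widehat{F}(E)\subset F(E)\subset {\rm Lin}(E)$ (hence ${\rm Lin}(\widehat{F}(E))\subset {\rm Lin}(E)$) and then apply Theorem~\ref{th4.3}(1) to place $y\in F(E)=\widehat{F}(E)\cup E$; the possibility $y\in E$ is ruled out by Corollary~\ref{cor3.7a}, which states ${\rm Lin}(\widehat{F}(E))\cap E=\varnothing$. This forces $y\in \widehat{F}(E)$, completing the inclusion.

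Having \eqref{e4.9}, the identity \eqref{e4.10} follows by direct substitution. Theorem~\ref{th4.3}(1) gives ${\rm Lin}(E)=(-F(E))\cup L_\prec\cup F(E)$, and because open components are pairwise disjoint we have the partition $F(E)=E\sqcup \widehat{F}(E)$. Substituting, ${\rm Lin}(E)=(-E)\cup(-\widehat{F}(E))\cup L_\prec\cup \widehat{F}(E)\cup E$, and collapsing the middle three sets via \eqref{e4.9} yields \eqref{e4.10}.

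The only subtle point is ensuring the reverse inclusion in \eqref{e4.9}, specifically ruling out the case $y\in E$ when $y\in P_\prec\cap {\rm Lin}(\widehat{F}(E))$; this is exactly the content of the disjointness assertion in Corollary~\ref{cor3.7a}, which was proved precisely so as to be usable here. Everything else is bookkeeping based on the linear ordering of $\mathcal{O}(P_\prec)$ and on the two structural theorems already established in Section~3 and in the first part of Section~4.
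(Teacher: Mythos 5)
Your proof is correct, but your route to \eqref{e4.9} is genuinely different from the paper's. You argue element-wise: starting from the trichotomy $Y=(-P_\prec)\cup L_\prec\cup P_\prec$, you place any $y\in{\rm Lin}(\widehat{F}(E))\cap P_\prec$ inside $F(E)$ via Theorem~\ref{th4.3}(1) (equivalently via Corollary~\ref{cor3.7}), and then exclude $y\in E$ by the disjointness ${\rm Lin}(\widehat{F}(E))\cap E=\varnothing$ from Corollary~\ref{cor3.7a} --- a legitimate application, since for a weak preference $\mathcal{O}(P_\prec)$ is linearly ordered and $E$ is not the least element. The paper proceeds structurally instead: it observes that $\{{\rm Lin}(\tilde{E})\mid \tilde{E}\lhd^* E\}$ is a chain of subspaces whose union is a subspace, establishes the intermediate identity \eqref{e4.10a}, ${\rm Lin}(\widehat{F}(E))=\bigcup\{{\rm Lin}(\tilde{E})\mid\tilde{E}\lhd^* E\}$, and then applies Theorem~\ref{th4.3}(1) to each $\tilde{E}\lhd^* E$ (rather than to $E$ itself, as you do), using $\bigcup\{F(\tilde{E})\mid\tilde{E}\lhd^* E\}=\widehat{F}(E)$; Corollary~\ref{cor3.7a} is never invoked there. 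Your version is more elementary and shows exactly where the disjointness lemma earns its keep; the paper's version buys the by-product \eqref{e4.10a}, which is reused later (it underlies the formula \eqref{e4.17} for $L_E$ in the subsequent remark and the identification $L_E={\rm Lin}(\widehat{F}(E))$ in \eqref{e4.11}), so its detour is not wasted. For \eqref{e4.10} the two proofs coincide: both substitute the disjoint decomposition $F(E)=E\cup\widehat{F}(E)$ into Theorem~\ref{th4.3}(1) and collapse the middle three sets via \eqref{e4.9}. One small point of rigor on your side: when you ``apply Theorem~\ref{th4.3}(1) to place $y\in F(E)$'' you should state explicitly that $y\in P_\prec$ rules out the pieces $-F(E)\subset-P_\prec$ and $L_\prec$, because $P_\prec\cap(-P_\prec)=\varnothing$ (asymmetry) and $P_\prec\cap L_\prec=\varnothing$; that disjointness is the actual content of the placement step, though it is routine.
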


\begin{proof}
Since $E$ is not the least element of the linearly ordered set $({\mathcal{O}}(P_\prec), \unlhd^*)$, the family $\{\tilde{E} \in {\mathcal{O}}(P_\prec) \mid \tilde{E} \lhd^* E\}$ is nonempty and, consequently, the cone $\widehat{F}(E)$ is also nonempty.
Note that the nonempty family of linear subspaces $\{{\rm Lin}(\tilde{E}) \mid \tilde{E} \in {\mathcal{O}}(P_\prec), \tilde{E} \lhd^* E\}$ is linearly ordered by inclusion and hence $\bigcup\{{\rm Lin}(\tilde{E}) \mid \tilde{E} \in {\mathcal{O}}(P_\prec), \tilde{E} \lhd^* E\}$ also is a linear subspace. Furthermore, since $\tilde{E} \subset \widehat{F}(E)$ for every $\tilde{E} \in {\mathcal{O}}(P_\prec), \tilde{E} \lhd^* E$ then ${\rm Lin}(\tilde{E}) \subset {\rm Lin}(\widehat{F}(E))$ and consequently $\bigcup \{{\rm Lin}(\tilde{E})\mid \tilde{E} \in {\mathcal{O}}(P_\prec)\mid \tilde{E} \lhd^* E\} \subset {\rm Lin}(\widehat{F}(E))$. On the other hand, the converse inclusion follows from $\widehat{F}(E) \subset \bigcup \{{\rm Lin}(\tilde{E})\mid \tilde{E} \in {\mathcal{O}}(P_\prec)\mid \tilde{E} \lhd^* E\}$ and thus we have
\begin{equation}\label{e4.10a}
{\rm Lin}(\widehat{F}(E)) = \bigcup \{{\rm Lin}(\tilde{E})\mid \tilde{E} \in {\mathcal{O}}(P_\prec), \tilde{E} \lhd^* E\}.
\end{equation}
Further, using the equality 1) from Theorem \ref{th4.3} we obtain ${\rm Lin}(\widehat{F}(E)) = \bigcup \{(-F(\tilde{E}))\cup L_\prec \cup F(\tilde{E})\mid \tilde{E} \in {\mathcal{O}}(P_\prec), \tilde{E} \lhd^* E\} = (-\widehat{F}(E))\cup L_\prec \cup \widehat{F}(E)$. Thus, we proved the equality \eqref{e4.9}.

Since $F(E) = E\cup \widehat{F}(E)$, the equality \eqref{e4.10} immediately follows from the equality ${\rm Lin}(E) = (-F(E))\cup L_\prec \cup F(E)$ (see Theorem \ref{th4.3}) and from the equality \eqref{e4.9}.
\end{proof}

\begin{corollary}
Let a compatible weak preference $\prec$ be such that the set ${\mathcal{O}}(P_\prec)$ of open components of its positive cone $P_\prec$ is finite. Suppose that ${\mathcal{O}}(P_\prec) = \{E_1,E_2,\ldots,E_m\}$ and $E_1 \lhd^* E_2 \lhd^* \ldots \lhd^* E_m$. Then
$$
{\rm Lin}(E_i) = (-E_i)\cup(-E_{i-1})\cup\ldots\cup (-E_1)\cup L_\prec \cup E_1 \cup \ldots \cup E_{i-1} \cup E_i\,\,\text{for all}\,\,i=1,2,\ldots,m,
$$
in particular,
$$
Y = {\rm Lin}(E_m) = (-E_m)\cup(-E_{m-1})\cup\ldots\cup (-E_1)\cup L_\prec \cup E_1 \cup \ldots \cup E_{m-1} \cup E_m.
$$
\end{corollary}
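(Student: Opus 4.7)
The plan is to derive this corollary as a straightforward specialization of Theorem~\ref{th4.3} once the sets $F(E_i)$ are identified explicitly. Since $\mathcal{O}(P_\prec)=\{E_1,\dots,E_m\}$ is totally ordered by $\lhd^*$ with $E_1\lhd^*E_2\lhd^*\cdots\lhd^*E_m$, the family $\{\tilde{E}\in\mathcal{O}(P_\prec)\mid\tilde{E}\unlhd^*E_i\}$ is exactly $\{E_1,\dots,E_i\}$, hence
$$
F(E_i)=E_1\cup E_2\cup\cdots\cup E_i \qquad (i=1,\dots,m).
$$
This is the only real bookkeeping step, and there is no obstacle: it follows purely from the definition of $F(E_i)$ given in assertion~1) of Theorem~\ref{th4.3}.

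Next, substituting this description of $F(E_i)$ into the equality ${\rm Lin}(E_i)=(-F(E_i))\cup L_\prec\cup F(E_i)$ provided by assertion~1) of Theorem~\ref{th4.3} immediately yields
$$
{\rm Lin}(E_i)=(-E_i)\cup(-E_{i-1})\cup\cdots\cup(-E_1)\cup L_\prec\cup E_1\cup\cdots\cup E_{i-1}\cup E_i,
$$
which is the first claimed formula, valid for every $i=1,\dots,m$.

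For the ``in particular'' statement, I would combine two further parts of Theorem~\ref{th4.3}. Assertion~3) gives $Y=\bigcup_{i=1}^m{\rm Lin}(E_i)$, and assertion~2) gives the strict chain ${\rm Lin}(E_1)\subsetneq{\rm Lin}(E_2)\subsetneq\cdots\subsetneq{\rm Lin}(E_m)$; therefore $Y={\rm Lin}(E_m)$. Setting $i=m$ in the formula just derived produces the displayed decomposition of $Y$. No step is delicate here; the content of the corollary is really just an explicit translation of Theorem~\ref{th4.3} to a finite totally ordered family, together with the observation that in this setting $F(E_m)$ exhausts $P_\prec$.
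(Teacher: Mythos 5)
Your proof is correct and follows essentially the route the paper intends: the corollary is stated without a separate proof precisely because, as you show, it is the specialization of assertion 1) of Theorem~\ref{th4.3} to a finite chain, via the identification $F(E_i)=E_1\cup\cdots\cup E_i$, with assertions 2) and 3) combining to give $Y=\mathrm{Lin}(E_m)$. Nothing is missing.
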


For the proof of the next proposition we need the following lemma.

\begin{lemma}\label{l4.1}
Let $Z$ be a vector subspace in a vector space $Y$.
An asymmetric convex cone $H \subset Z$ is a conical halfspace in $Z$ (this means that both $H$ and $Z \setminus H$ are convex cones), if and only if the set $Z \setminus (H\bigcup(-H))$ is a vector subspace. Moreover, the vector subspace $L_H$ associated with  an asymmetric conical halfspace $H \subset Z$ coincides with the vector subspace $Z \setminus (H\bigcup(-H))$.
\end{lemma}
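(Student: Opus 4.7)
My plan is to exploit the partition $Z = H \sqcup (-H) \sqcup M$, where $M := Z\setminus(H\cup(-H))$. This partition is disjoint because $H$ is asymmetric ($H\cap(-H)=\varnothing$, and in particular $0\notin H$, $0\notin -H$, so $0\in M$), and it gives the convenient rewriting $Z\setminus H = (-H) \cup M$. Throughout, I will use three tools already established in the paper: that a convex cone $K$ is asymmetric iff $K\cap L_K = \varnothing$, that $L_K = L_{-K}$, and the definition $L_K = \{h : y+th \in K\ \forall y\in K,\, \forall t\in\mathbb{R}\}$.

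For the forward implication, assume $H$ is a conical halfspace, so both $H$ and $Z\setminus H$ are convex cones. I need to check that $M$ is a vector subspace. It contains $0$ as noted, and is closed under nonzero scalars because $H$ and $-H$ are each cones. For closure under addition, let $x,y \in M$; then $\pm x, \pm y$ all lie in $Z\setminus H$, and convexity of the cone $Z\setminus H$ yields both $x+y \in Z\setminus H$ and $-(x+y) = (-x)+(-y) \in Z\setminus H$, i.e.\ $x+y \notin H$ and $x+y \notin -H$, so $x+y \in M$.

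For the reverse implication, assume $M$ is a subspace. Since $H$ is given to be a convex cone, I only need $Z\setminus H = (-H)\cup M$ to be a convex cone as well. The cone property is immediate (positive scaling preserves both $-H$ and $M$), and for closure under sums I argue by cases: sums inside $M$ stay in $M$ by the subspace hypothesis, sums inside $-H$ stay in $-H$ by convexity of the cone $-H$, and for $x\in M,\ y\in -H$, if we had $x+y \in H$ then $x = (x+y) + (-y) \in H + H \subset H$ would contradict $x\in M$.

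Finally, for the identity $L_H = M$: the inclusion $L_H \subset M$ follows because asymmetry of $H$ gives $L_H \cap H = \varnothing$ and the same applied to $-H$, combined with $L_H = L_{-H}$, gives $L_H \cap (-H) = \varnothing$. For $M \subset L_H$, fix $h\in M$, $y\in H$, $t\in\mathbb{R}$, and rule out the other two cells of the partition: if $y+th \in M$ then $y = (y+th) + (-th) \in M+M = M$ contradicts $y\in H$; if $y+th \in -H$ then $-th = (-(y+th)) + y \in H + H \subset H$, and since $t\ne 0$ (else $y+th=y\in H$ is not in $-H$) this forces $h\in H \cup (-H)$, contradicting $h \in M$. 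Hence $y+th \in H$ for every choice, so $h\in L_H$. The main obstacle is simply the bookkeeping: no deep idea beyond the three-way partition and repeated use of $H+H\subset H$ is required, but the case analysis must track carefully which cell of the partition each element belongs to.
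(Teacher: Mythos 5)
Your proof is correct, but there is no paper proof to compare it against: the author explicitly omits the argument, remarking only that a counterpart of the first assertion can be found in the cited paper of Gorokhovik and Semenkova [GS98] and that the second assertion is ``straightforward'' and left to the reader. Your three-cell partition $Z = H \sqcup (-H) \sqcup M$ with $M := Z\setminus(H\cup(-H))$, together with repeated use of $H+H\subset H$ and the paper's equivalences ($K$ convex iff $K+K\subset K$ for a cone; $K$ asymmetric iff $K\cap L_K=\varnothing$; $L_K=L_{-K}$), is precisely the elementary verification the author presumably had in mind, and all your case analyses check out: the forward direction correctly uses $-M=M$ and additivity of the convex cone $Z\setminus H$; the reverse direction correctly handles the mixed case $x\in M$, $y\in -H$ via $x=(x+y)+(-y)$; and in the ``moreover'' part both exclusions ($y+th\in M$ forces $y\in M$, and $y+th\in -H$ forces $-th\in H$, hence $h\in H\cup(-H)$) are sound. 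One pedantic point you should patch: in concluding $L_H\subset M$ from $L_H\cap H=\varnothing$ and $L_H\cap(-H)=\varnothing$, you implicitly need $L_H\subset Z$, which is not automatic from the definition ($L_H$ is a priori a subset of $Y$, while $M\subset Z$); it follows in one line by fixing any $y\in H$ and writing $h=(y+h)-y\in Z$, but this requires $H\ne\varnothing$ (an implicit assumption in the lemma: for $H=\varnothing$ the definition gives $L_H=Y$ while $M=Z$, so the ``moreover'' clause would fail when $Z\ne Y$). Also note that your appeal to $M$ being a subspace inside the ``moreover'' argument is legitimate only because the first part of the lemma, already proved at that point, supplies it. With that one-line repair your proof is complete and self-contained, which is arguably more useful than the paper's deferral to an external reference.
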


A counterpart of the first assertion of this lemma can be found in \cite[see Corollary, p.165]{GS98}.
The proof of the second assertion are straightforward and we leave it for readers.

\begin{proposition}
Let $\prec$ be a compatible weak preference on a real vector space $Y$ and let ${\mathcal{O}}(P_\prec)$ be the set of open components of the positive cone $P_\prec$.
Then

i) each open component $E \in {\mathcal{O}}(P_\prec)$ is a relatively open halfspaces in ${\rm Lin}(E)$ and
\begin{equation}\label{e4.11}
L_E = {\rm Lin}(\widehat{F}(E)) = (-\widehat{F}(E))\cup L_\prec \cup \widehat{F}(E);
\end{equation}

ii) for each open component $E \in {\mathcal{O}}(P_\prec)$ the set $F(E)$ (which by Theorem \ref{th3.13} is a face of $P_\prec$) is a conical halfspace in ${\rm Lin}(E)$ and $L_{F(E)} = L_\prec$;

iii) for each open component $E \in {\mathcal{O}}(P_\prec)$ the set $\widehat{F}(E)$ (which  by Theorem \ref{th3.13} is a face of $P_\prec$) is a conical halfspace in $L_E$ and $L_{\widehat{F}(E))} = L_\prec$.

{\rm Recall that $L_K := \{h \in Y \mid y + \alpha h \in K\,\,\forall\,y \in K\,\,\text{and}\,\,\forall\,\alpha \in {\mathbb{R}}\}$ is the vector subspace associated with the convex cone $K$ while ${\rm Lin}(K)$ is the convex hull of $K$, and $L_\prec = L_{P_\prec}$.
}
\end{proposition}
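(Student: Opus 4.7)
The plan is to derive all three assertions from Lemma~\ref{l4.1}, which characterizes asymmetric conical halfspaces $K$ in a subspace $Z$ by the condition that $Z \setminus (K \cup (-K))$ be a vector subspace, and which in that case identifies $L_K$ with this complement. I will apply the lemma with $K = E$ and $K = F(E)$ inside $Z = {\rm Lin}(E)$ for parts (i) and (ii), and with $K = \widehat{F}(E)$ inside $Z = L_E$ for part (iii). In every case $K$ is an asymmetric convex cone ($K \subset P_\prec$, asymmetric because $\prec$ is a weak preference, and convex by Proposition~\ref{pr3.6}, Theorem~\ref{th3.13}, or Corollary~\ref{cor3.7a} as appropriate), so the real work is to identify the complement of $K \cup (-K)$ in $Z$ as a linear subspace.

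I would begin with (ii), which is immediate: Theorem~\ref{th4.3}(1) gives the decomposition ${\rm Lin}(E) = (-F(E)) \cup L_\prec \cup F(E)$, and this union is disjoint since $P_\prec$ is asymmetric and $P_\prec \cap L_\prec = \varnothing$ (standing consequences of $\prec$ being a weak preference). Hence ${\rm Lin}(E) \setminus (F(E) \cup (-F(E))) = L_\prec$, and Lemma~\ref{l4.1} yields both that $F(E)$ is a conical halfspace in ${\rm Lin}(E)$ and that $L_{F(E)} = L_\prec$.

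For (i), I would refine the decomposition via the partition $F(E) = E \cup \widehat{F}(E)$ (disjoint as a union of distinct $\eq$-equivalence classes) to obtain
\[
{\rm Lin}(E) \setminus (E \cup (-E)) \;=\; (-\widehat{F}(E)) \cup L_\prec \cup \widehat{F}(E).
\]
When $E \ne E_{inf}$, the preceding proposition -- specifically equation~\eqref{e4.9} -- identifies this set with ${\rm Lin}(\widehat{F}(E))$, so the complement is a subspace and Lemma~\ref{l4.1} produces exactly \eqref{e4.11}. When $E = E_{inf}$ the complement reduces to $L_\prec$ and the corollary following Theorem~\ref{th4.3} already supplies $L_{E_{inf}} = L_\prec$. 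The ``relatively open'' clause is immediate from Proposition~\ref{pr3.6}.

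Finally, (iii) drops out of (i): since $L_E = (-\widehat{F}(E)) \cup L_\prec \cup \widehat{F}(E)$ disjointly, we have $L_E \setminus (\widehat{F}(E) \cup (-\widehat{F}(E))) = L_\prec$, a vector subspace, and Lemma~\ref{l4.1} applied in the ambient space $L_E$ delivers both claims of (iii). The only subtle point throughout is the bookkeeping around disjointness of these unions and the degenerate case $E = E_{inf}$; both are handled by the asymmetry of $P_\prec$ together with the corollary identifying $L_{E_{inf}} = L_\prec$.
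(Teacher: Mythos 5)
Your proof is correct and follows essentially the same route as the paper, which likewise deduces assertions i), ii) and iii) immediately from Lemma~\ref{l4.1} applied to the decompositions \eqref{e4.10}, \eqref{e4.9a} and \eqref{e4.9}, respectively. Your additional bookkeeping on disjointness and the degenerate case $E = E_{inf}$ (settled via $L_{E_{inf}} = L_\prec$) merely makes explicit details the paper leaves implicit.
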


\begin{proof}
The validity of the assertions $i$), $ii$) and $iii$) follows immediately from Lemma \ref{l4.1} and the equalities \eqref{e4.10}, \eqref{e4.9a} and \eqref{e4.9}, respectively.
\end{proof}

\begin{corollary}
Let $\prec$ be a compatible weak preference on $Y$. Then for every open component $E \in {\mathcal{O}}(P_\prec)$ the equality
\begin{equation*}
{\rm Lin}(E) = (-E)\cup L_E \cup E
\end{equation*}
holds and, consequently, each $E \in {\mathcal{O}}(P_\prec)$ is a relatively open asymmetric conical halfspace in ${\rm Lin}(E)$.
\end{corollary}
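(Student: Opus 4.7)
The plan is to combine the two structural results just established. The preceding proposition gives both the decomposition
\begin{equation*}
{\rm Lin}(E)=(-E)\cup {\rm Lin}(\widehat{F}(E))\cup E \tag{\ref{e4.10}}
\end{equation*}
and the identification $L_E={\rm Lin}(\widehat{F}(E))$ from \eqref{e4.11}, valid whenever $E$ is not the least element of $(\mathcal{O}(P_\prec),\unlhd^*)$. So for such $E$, a direct substitution of $L_E$ in place of ${\rm Lin}(\widehat{F}(E))$ yields the target equality ${\rm Lin}(E)=(-E)\cup L_E\cup E$.

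The remaining case, $E=E_{inf}$, is handled separately and is precisely what the corollary following Theorem~\ref{th4.3} records: there one has both ${\rm Lin}(E_{inf})=(-E_{inf})\cup L_\prec\cup E_{inf}$ and $L_{E_{inf}}=L_\prec$, which is the same equality. Thus the first assertion holds for every $E\in\mathcal{O}(P_\prec)$.

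For the halfspace conclusion, I would invoke Lemma~\ref{l4.1} with $Z:={\rm Lin}(E)$ and $H:=E$. First, $E$ is a convex cone by Proposition~\ref{pr3.6}, and it is asymmetric because $E\subset P_\prec$ and $P_\prec\cap(-P_\prec)=\varnothing$; in particular $0\notin E$. Second, the three sets $-E$, $L_E$, $E$ on the right-hand side of the equality just proved are pairwise disjoint: $E\cap(-E)=\varnothing$ by asymmetry; and if $y\in E\cap L_E$, then $y+(-1)y=0$ would belong to $E$, contradicting $0\notin E$. Therefore ${\rm Lin}(E)\setminus(E\cup(-E))=L_E$, which is a vector subspace of ${\rm Lin}(E)$. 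By Lemma~\ref{l4.1}, $E$ is an asymmetric conical halfspace in ${\rm Lin}(E)$, and its relative openness is exactly Proposition~\ref{pr3.6}.

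There is no real obstacle here: the argument is pure bookkeeping, consolidating the previously established decompositions and applying Lemma~\ref{l4.1}. The only mildly delicate point is remembering to verify that the union $(-E)\cup L_E\cup E$ is disjoint before reading off ${\rm Lin}(E)\setminus(E\cup(-E))=L_E$, which is what makes the Lemma~\ref{l4.1} criterion applicable.
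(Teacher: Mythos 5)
Your proof is correct and follows essentially the route the paper intends: the corollary is obtained by substituting the identification $L_E={\rm Lin}(\widehat{F}(E))$ from \eqref{e4.11} into the decomposition \eqref{e4.10}, handling the least element $E_{inf}$ (when it exists) via the corollary to Theorem~\ref{th4.3}, and then reading off the halfspace property from Lemma~\ref{l4.1} and the relative openness from Proposition~\ref{pr3.6}. Your explicit verification that the union $(-E)\cup L_E\cup E$ is disjoint, and the separate treatment of the minimal component, merely spell out details the paper leaves implicit; the underlying argument is the same.
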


\begin{remark}
It follows from \eqref{e4.10}, \eqref{e4.10a} and \eqref{e4.11} that
\begin{equation}\label{e4.17}
L_E = \bigcup\{{\rm Lin}(\tilde{E}) \mid \tilde{E} \in {\mathcal O}(P_\prec), \tilde{E} \lhd^*E\}
\end{equation}
for each $E \in {\mathcal O}(P_\prec)$ .

Taking into account  the assertion 2) from Theorem \ref{th4.3} we see that in the case when the family $\{\tilde{E} \in {\mathcal O}(P_\prec), \tilde{E} \lhd^*E\}$ has the greatest element $\widehat{E}$ with respect to $\lhd^*$ the vector subspace $L_E$ coincides with ${\rm Lin}(\widehat{E})$, i.e., $L_E = {\rm Lin}(\widehat{E})$.
\end{remark}

Recall (see Section 3) that we associate with each open component $E \in \mathcal{O}(P_\prec)$ the compatible partial preference $\prec_E$ on $Y$ such that $y_1 \prec_E y_2$ if and only if $y_2 - y_1 \in E$ and equip
the family $\mathcal{T}(\prec):=\{\prec_E \mid E \in \mathcal{O}(P_\prec)\}$ with the partial order $\unlhd'$ defined as follows: $\prec_{E_1} \unlhd' \prec_{E_2}$, for $\prec_{E_1},\prec_{E_2} \in \mathcal{T}(\prec)$, if and only if ${E_1} \unlhd^* {E_2}$.

\begin{theorem}
For each compatible weak preference $\prec$ defined on $Y$ the family \linebreak $\mathcal{T}(\prec):=\{\prec_E \mid E \in \mathcal{O}(P_\prec)\}$ is linearly ordered by $\unlhd'$ and consists of such relatively open compatible partial preferences $\prec_E$ on $Y$ the restriction of which to ${\rm Lin}(E)$ is a weak preference.
\end{theorem}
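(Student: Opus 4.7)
The plan is to assemble this statement from three pieces already established in the section, each corresponding to one of the three claims packed into the theorem.

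First, for the linear ordering of $\mathcal{T}(\prec)$ by $\unlhd'$: by definition of $\unlhd'$, the ordered set $(\mathcal{T}(\prec), \unlhd')$ is order-isomorphic to $(\mathcal{O}(P_\prec),\unlhd^*)$ via $\prec_E \mapsto E$. The corollary in this section that states $\mathcal{O}(P_\prec)$ is linearly ordered by $\unlhd^*$ when $\prec$ is a compatible weak preference then immediately delivers the linear ordering of $\mathcal{T}(\prec)$.

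Second, that each $\prec_E$ is a relatively open compatible partial preference on $Y$: the positive cone of $\prec_E$ is, by construction, the set $E$, and Proposition~\ref{pr3.6} asserts that every open component $E \in \mathcal{O}(P_\prec)$ is a relatively algebraic open convex cone. Equivalently, through Theorem~\ref{th3.2}, $E = {\rm icr}\, E = P_{\pprec_E}$, which is precisely the definition of a relatively open compatible preference.

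Third, that the restriction of $\prec_E$ to ${\rm Lin}(E)$ is a weak preference: by the characterization recalled in Subsection~\ref{sec-2.2}, this is equivalent to showing that $E$ is an asymmetric conical halfspace in ${\rm Lin}(E)$, i.e.\ that ${\rm Lin}(E) = (-E) \cup L_E \cup E$ with the union on $E \cap L_E$ empty and $E \cap (-E) = \varnothing$. The first equality is exactly the content of the corollary stated just before this theorem, and the disjointness already follows from asymmetry of $E$ and from $L_E \cap E = \varnothing$ (a general property of $L_K$ for an asymmetric convex cone $K$, as recalled earlier).

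There is no real obstacle here; the theorem is essentially a repackaging of Proposition~\ref{pr3.6}, the preceding corollary on $\mathcal{O}(P_\prec)$ being linearly ordered, and the corollary giving ${\rm Lin}(E) = (-E) \cup L_E \cup E$. The only care needed is to observe that the order-isomorphism between $\mathcal{T}(\prec)$ and $\mathcal{O}(P_\prec)$ transports both the linear-ordering property and the various structural facts about $E$ to the corresponding facts about $\prec_E$.
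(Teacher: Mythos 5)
Your proposal is correct and follows essentially the same route as the paper, which simply states that ``the proof follows from the preceding assertions'' --- namely the corollary that $(\mathcal{O}(P_\prec),\unlhd^*)$ is linearly ordered for a weak preference, Proposition~\ref{pr3.6} on relative algebraic openness of each component, and the corollary giving ${\rm Lin}(E)=(-E)\cup L_E\cup E$. Your write-up merely makes these three ingredients (and the transport along the order isomorphism $\prec_E\mapsto E$) explicit, which is exactly what the paper leaves implicit.
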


The proof follows from the preceding assertions.

\smallskip

The last theorem shows that the internal structure of a compatible weak preference $\prec$ can be identified with the structure of the linearly ordered the family $(\mathcal{T}(\prec), \unlhd')$ and, consequently, it is more refined in comparison with the internal structure of an arbitrary compatible partial preference, for which the family $(\mathcal{T}(\prec), \unlhd')$ is, in general, only an upper lattice.

\smallskip

Before to complete this section we prove a number of assertions which will used in the next section for to obtain the analytical representation of compatible weak preferences.

\begin{proposition}\label{pr4.11}
Let $\prec$ be a compatible weak preference on $Y$. For each open component $E \in {\mathcal{O}}(P_\prec)$ there is a nonzero linear functions $\phi_E: Y \to {\mathbb{R}}$ such that
\begin{equation}\label{e4.18}
E =\{y \in {\rm Lin}(E) \mid \phi_E(y) > 0\}\,\,\text{and}\,\,
L_E = \{y \in {\rm Lin}(E) \mid \phi_E(y) = 0\}.
\end{equation}
Furthermore, $\phi_E(y) = 0$ for all $y \in {\rm Lin}(\tilde{E})$ provided that $\tilde{E} \in {\mathcal O}(P_\prec), \tilde{E} \lhd^* E$.
\end{proposition}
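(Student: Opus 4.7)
The plan is to descend to the quotient $V := {\rm Lin}(E)/L_E$, show that $V$ is one-dimensional, and then define $\phi_E$ as the pullback of a generator of $V^*$, extended by zero on an algebraic complement of ${\rm Lin}(E)$ in $Y$.

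\emph{Setup.} By the corollary just above, ${\rm Lin}(E)=(-E)\cup L_E\cup E$ is a disjoint union and $E$ is a relatively algebraically open asymmetric conical halfspace in ${\rm Lin}(E)$. Let $\pi\colon {\rm Lin}(E)\to V$ be the canonical projection and set $\bar E := \pi(E)$. Since $L_E$ is the subspace associated with $E$, $E+L_E=E$, which forces $\pi^{-1}(\bar E)=E$; hence $V=(-\bar E)\cup\{0\}\cup \bar E$ is also a disjoint union, $\bar E$ is a convex cone, and $L_{\bar E}=\{0\}$. Because $E={\rm icr}\,E$ in ${\rm Lin}(E)$, for every $v\in E$ and every $h\in {\rm Lin}(E)$ there is a $\delta>0$ with $v+t h\in E$ for all $t\in(-\delta,\delta)$.

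\emph{Main step: $\dim V=1$.} Fix a nonzero $\bar u\in\bar E$ and an arbitrary $\bar v\in\bar E$; lift them to $u,v\in E$. Consider $A:=\{\gamma>0:\bar v-\gamma\bar u\in\bar E\}$ and $B:=\{\gamma>0:\gamma\bar u-\bar v\in\bar E\}$. Applying the openness property of $E$ at $v$ in direction $-u$ gives $v-\mu u\in E$ for all sufficiently small $\mu>0$, so $(0,\delta_1)\subset A$; applying it at $u$ in direction $-v$ and then rescaling by $1/t$ gives $\nu u-v\in E$ for all sufficiently large $\nu>0$, so $(\delta_2^{-1},\infty)\subset B$. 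By asymmetry $A\cap B=\varnothing$, and since $\bar E+\bar E\subset\bar E$, $A$ is downward- and $B$ is upward-closed in $(0,\infty)$, whence $\alpha:=\sup A$ is finite. By the trichotomy $V=(-\bar E)\cup\{0\}\cup\bar E$, $\bar v-\alpha\bar u$ lies in exactly one piece. If $\bar v-\alpha\bar u\in\bar E$, choose a preimage $z\in E$; openness at $z$ in direction $-u$ yields $\mu'>0$ with $z-\mu'u\in E$, so $\bar v-(\alpha+\mu')\bar u\in\bar E$, i.e.\ $\alpha+\mu'\in A$, contradicting $\alpha=\sup A$. If $\bar v-\alpha\bar u\in -\bar E$, choose a preimage $w\in E$ of $\alpha\bar u-\bar v$; the same openness argument (shrinking $\mu''$ if necessary so that $\mu''<\alpha$) yields $(\alpha-\mu'')\bar u-\bar v\in\bar E$, i.e.\ $\alpha-\mu''\in B$, and upward-closedness of $B$ then forces $A\subset(0,\alpha-\mu'')$, again contradicting $\alpha=\sup A$. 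Only $\bar v-\alpha\bar u=0$ remains; hence $\bar v=\alpha\bar u$, $\bar E=\mathbb{R}_+\bar u$, and $V=\mathbb{R}\bar u$ is one-dimensional.

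\emph{Construction and extension.} Let $\bar\phi\colon V\to\mathbb{R}$ be the linear isomorphism sending $\bar u$ to $1$, so that $\bar\phi$ maps $\bar E$ onto $(0,\infty)$, and put $\phi_E:=\bar\phi\circ\pi\colon {\rm Lin}(E)\to\mathbb{R}$. Both equalities of \eqref{e4.18} hold by construction. Extend $\phi_E$ to $Y$ by picking any algebraic complement $Y_0$ of ${\rm Lin}(E)$ in $Y$ and setting $\phi_E\equiv 0$ on $Y_0$; linearity is preserved and \eqref{e4.18} is unaffected. Finally, for any $\tilde E\in{\mathcal{O}}(P_\prec)$ with $\tilde E\lhd^* E$, identity \eqref{e4.17} gives ${\rm Lin}(\tilde E)\subset L_E$, and $\phi_E$ vanishes on $L_E$ by construction.

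The main obstacle is the middle step: asymmetry of $\bar E$ keeps $A$ and $B$ disjoint, relative openness of $E$ populates both and rules out the two ``bad'' branches of the trichotomy at $\alpha=\sup A$, and these ingredients combine to collapse $V$ to dimension one. The rest is a standard pullback-and-extension construction.
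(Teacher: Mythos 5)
Your proof is correct, and it takes a genuinely different route from the paper's. The paper disposes of \eqref{e4.18} in a few lines: since $E$ and $L_E$ are disjoint convex subsets of ${\rm Lin}(E)$ and $E$ is relatively algebraically open, it invokes the proper separation theorem in vector spaces (Corollary 5.62 of Aliprantis--Border) to produce a nonzero $\phi_E$ properly separating $E$ from $L_E$, leaves the verification of \eqref{e4.18} to the reader (it rests on the same trichotomy ${\rm Lin}(E)=(-E)\cup L_E\cup E$ you use), and derives the last assertion from \eqref{e4.17} exactly as you do. You instead prove from scratch the stronger structural fact that $L_E$ has codimension one in ${\rm Lin}(E)$: in the quotient $V={\rm Lin}(E)/L_E$ your Dedekind-cut argument --- the sets $A$ and $B$, disjoint by asymmetry, downward/upward closed by convexity, populated by relative algebraic openness, with both bad branches of the trichotomy at $\alpha=\sup A$ excluded again by openness --- collapses $\bar E$ to the open ray ${\mathbb{R}}_{+}\bar u$, after which $\phi_E$ is written down explicitly. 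I checked the delicate points: $\pi^{-1}(\bar E)=E$ does follow from $E+L_E=E$, the quotient inherits both the trichotomy and algebraic openness, $\alpha$ is finite and positive, and the shrinking $\mu''<\alpha$ in the second case is handled. Note that the essential input in both proofs is that $\prec$ is weak: without the halfspace trichotomy supplied by the corollary preceding the proposition, the quotient need not be one-dimensional and no such $\phi_E$ exists. As for what each route buys: the paper's citation keeps the proof short and outsources the hard content to a standard reference, while your argument is self-contained (choice enters only in the routine zero-extension over a complement of ${\rm Lin}(E)$, as it also does, hidden, in the cited separation theorem), constructs $\phi_E$ explicitly, and makes visible the geometric meaning of \eqref{e4.18} --- that it is equivalent to $\dim\bigl({\rm Lin}(E)/L_E\bigr)=1$ --- which the separation-based proof yields only implicitly.
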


\begin{proof}
Note that $E$ and $L_E$ are convex subsets in ${\rm Lin}(E)$ and $E \cap L_E = \varnothing$. Furthermore, $E$ is relatively algebraic open.  By Corollary 5.62 from \cite{Aliprantis} there is a nonzero linear functional $\phi_E$ defined on $Y$ which properly separates $E$ and $L_E$. It is not hard to verify that $\phi_E$ satisfies the equalities \eqref{e4.18}.

The last assertion follows from the second equality in \eqref{e4.18} and the equality \eqref{e4.17}.
\end{proof}

Thus, every compatible weak preference $\prec\, \subset Y \times Y$ can be associated with the following family ${\mathcal F}_\prec$ of nonzero linear functions:
$$
{\mathcal F}_\prec := \{\phi_E \in {\mathcal L}(Y,{\mathbb{R}}) \mid E \in {\mathcal O}(P_\prec), \phi_E\,\,\text{satisfies}\,\,\eqref{e4.18}\},
$$
where ${\mathcal L}(Y,{\mathbb{R}})$ stands for the vector space of linear functions defined on $Y$.

Notice that the family ${\mathcal F}_\prec$ is non uniquely  defined.

We endow
the family ${\mathcal F}_\prec$ with the linear order $\unlhd\,'$ defined as follows:
$$
\phi_E \unlhd' \phi_{\tilde{E}} \Longleftrightarrow {\tilde{E}} \unlhd^* E.
$$
Clearly $({\mathcal F}_\prec, \unlhd\,')$ is order anti-isomorphic to $({\mathcal O}(P_\prec),\unlhd^*)$.

\begin{proposition}
The family ${\mathcal F}_\prec$ is linearly independent.
\end{proposition}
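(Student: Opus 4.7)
The plan is to prove linear independence directly using the triangular structure provided by Proposition \ref{pr4.11}. Since linear independence is a property of finite subfamilies, I fix an arbitrary finite subset $\{\phi_{E_1},\ldots,\phi_{E_n}\} \subset \mathcal{F}_\prec$. Since $({\mathcal O}(P_\prec),\unlhd^*)$ is linearly ordered (this is the corollary established right after Proposition 4.1 of Section 4), I may relabel so that
$$E_1 \lhd^* E_2 \lhd^* \cdots \lhd^* E_n.$$
Then I assume $\sum_{i=1}^{n} c_i \phi_{E_i} = 0$ and aim to show all $c_i=0$.

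The key observation is contained in the last statement of Proposition \ref{pr4.11}: for every $i<j$ we have $E_i \lhd^* E_j$, hence $\phi_{E_j}$ vanishes on ${\rm Lin}(E_i)$. This gives a triangular system that I exploit by induction on $i$. For the base case, I pick any vector $y_1 \in E_1$; by Proposition \ref{pr4.11}, $\phi_{E_1}(y_1) > 0$, while $\phi_{E_j}(y_1) = 0$ for every $j>1$ because $y_1 \in E_1 \subset {\rm Lin}(E_1)$ and $E_1 \lhd^* E_j$. Evaluating $\sum_i c_i\phi_{E_i}=0$ at $y_1$ yields $c_1\phi_{E_1}(y_1)=0$, whence $c_1 = 0$.

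For the inductive step, assume $c_1 = \cdots = c_{k-1} = 0$. Pick $y_k \in E_k$. Then $\phi_{E_k}(y_k) > 0$, and for every $j>k$ I have $\phi_{E_j}(y_k)=0$ since $y_k \in {\rm Lin}(E_k)$ and $E_k \lhd^* E_j$. Evaluating the relation $\sum_{i \ge k} c_i \phi_{E_i} = 0$ at $y_k$ therefore collapses to $c_k \phi_{E_k}(y_k) = 0$, forcing $c_k = 0$. Iterating up to $k=n$ shows that every coefficient vanishes, which proves linear independence of $\mathcal{F}_\prec$.

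I do not anticipate any real obstacle: the entire argument rests on the ``one-sided'' vanishing property of Proposition \ref{pr4.11} together with the fact that the family $\mathcal{O}(P_\prec)$ is totally ordered. The only thing one has to be slightly careful about is to pick the test vectors $y_k$ in $E_k$ itself (not merely in ${\rm Lin}(E_k)$), so that $\phi_{E_k}(y_k)$ is guaranteed to be strictly positive rather than possibly zero; this is exactly what the first equality of \eqref{e4.18} provides.
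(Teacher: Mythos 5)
Your proof is correct and takes essentially the same route as the paper: both arguments rest on the second assertion of Proposition~\ref{pr4.11} (the functional of a $\unlhd^*$-larger component vanishes on the linear hull of a smaller one) together with the strict positivity in \eqref{e4.18}, applied at the $\unlhd^*$-least component of the finite subfamily. The only difference is presentational — the paper assumes all coefficients nonzero and derives a single contradiction by restricting the dependence relation to ${\rm Lin}(E_k)$, whereas you evaluate at test vectors and kill the coefficients one by one by induction, which is just the same step iterated.
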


\begin{proof}
Arguing by contradiction, assume that there is a finite subfamily $\{\phi_{E_1},\phi_{E_2},\ldots,\phi_{E_k}\} \subset {\mathcal F}_\prec$ and nonzero real numbers $\alpha_1,\alpha_2,\ldots,\alpha_k$ such that $\alpha_1\phi_{E_1}(y)+\alpha_2\phi_{E_2}(y)+\ldots+\alpha_k\phi_{E_k}(y)=0$ for all $y \in Y$. Without loss of generality we may assume that $\phi_{E_1} \lhd'\phi_{E_2} \lhd' \ldots \lhd' \phi_{E_k}$. Then $E_k \lhd^* E_{k-1} \lhd^*\ldots \lhd^* E_2 \lhd^* E_1$ and hence, through the second assertion of Proposition \ref{pr4.11}, for every $i = 1,2,\ldots,k-1$ the equality $\phi_{E_i}(y)=0$ holds for all $y \in {\rm Lin}(E_k)$. Since $-\alpha_k\phi_{E_k}(y)=\alpha_1\phi_{E_1}(y)+\alpha_2\phi_{E_2}(y)+\ldots+\alpha_{k-1}\phi_{E_{k-1}}(y)$ for all $y \in Y$, we have $\phi_{E_k}(y)=0$ for all $y \in {\rm Lin}(E_k)$, but this contradicts $\phi_{E_k}(y)>0$ for all $y \in E_k$.
\end{proof}

\begin{proposition}
Let $\prec$ be a compatible weak preference and let ${\mathcal F}_\prec = \{\phi_E \mid E \in {\mathcal O}(P_\prec)\}$  be a family of linear functions associated with $\prec$.
For any open component ${E} \in {\mathcal O}(P_\prec)$, which is not the greatest element of ${\mathcal O}(P_\prec)$, we have
\begin{equation}\label{e4.19}
{\rm Lin}({E})=\{y \in Y \mid \phi_{\tilde{E}}(y)=0\,\,\text{for all}\,\,\tilde{E} \in {\mathcal O}(P_\prec)\,\,\text{such that}\,\,{E} \lhd^* \tilde{E}\},
\end{equation}
and ${\rm Lin}({E})= Y$, when ${E}$ is the greatest element of ${\mathcal O}(P_\prec)$.
\end{proposition}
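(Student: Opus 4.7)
The plan is to dispose of the greatest-element case first and then handle the general case by mutual inclusion.

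For the greatest-element case, assume $E$ is the greatest element of $(\mathcal{O}(P_\prec), \unlhd^*)$. By assertion 2) of Theorem \ref{th4.3}, every $E' \in \mathcal{O}(P_\prec)$ satisfies $\mathrm{Lin}(E') \subseteq \mathrm{Lin}(E)$, and by assertion 3) of the same theorem $Y = \bigcup\{\mathrm{Lin}(E') \mid E' \in \mathcal{O}(P_\prec)\}$. Combining these yields $\mathrm{Lin}(E) = Y$ immediately.

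For the general case with $E$ not greatest, I would prove \eqref{e4.19} by two inclusions. The inclusion ``$\subseteq$'' is a direct reformulation of the final clause of Proposition \ref{pr4.11}: for any $\tilde{E}$ with $E \lhd^* \tilde{E}$, the functional $\phi_{\tilde{E}}$ vanishes on $\mathrm{Lin}(E)$, so every $y \in \mathrm{Lin}(E)$ automatically lies in the right-hand side. For the reverse inclusion I would argue by contrapositive. Suppose $y \notin \mathrm{Lin}(E)$. Since $Y = (-P_\prec) \cup L_\prec \cup P_\prec$ and $L_\prec \subseteq L_E \subseteq \mathrm{Lin}(E)$ by Proposition \ref{pr4.2a}, we have $y \notin L_\prec$, and replacing $y$ by $-y$ if needed we may assume $y \in P_\prec$. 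Let $E_y \in \mathcal{O}(P_\prec)$ be the open component containing $y$. By \eqref{e4.18} we have $\phi_{E_y}(y) > 0$. Since $\mathcal{O}(P_\prec)$ is linearly ordered and $E_y \neq E$ (otherwise $y \in E_y \subseteq \mathrm{Lin}(E)$), either $E_y \lhd^* E$ or $E \lhd^* E_y$; the first alternative is excluded by assertion 2) of Theorem \ref{th4.3}, since it would give $\mathrm{Lin}(E_y) \subseteq \mathrm{Lin}(E)$ and hence $y \in \mathrm{Lin}(E)$. Therefore $E \lhd^* E_y$, and $\tilde{E} := E_y$ witnesses the failure of the condition on $y$.

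The only delicate point is the correct bookkeeping in the three-case decomposition of $Y$ and the use of linearity of the order: once one observes that $L_\prec \subseteq \mathrm{Lin}(E)$ eliminates the $L_\prec$-summand and that every $y \in P_\prec$ lies strictly inside some open component $E_y$ where $\phi_{E_y}$ is strictly positive, the conclusion is forced by the linearity of $(\mathcal{O}(P_\prec), \unlhd^*)$ and the strict monotonicity of the chain $\mathrm{Lin}(\cdot)$ along $\unlhd^*$. No deeper tools beyond Proposition \ref{pr4.11}, Proposition \ref{pr4.2a}, and Theorem \ref{th4.3} appear to be needed.
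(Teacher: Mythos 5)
Your proof is correct and follows essentially the same route as the paper's: the greatest-element case via assertions 2) and 3) of Theorem \ref{th4.3}, and the inclusion ${\rm Lin}(E) \subseteq \{y \in Y \mid \phi_{\tilde{E}}(y)=0\,\,\text{for all}\,\,\tilde{E}\,\,\text{with}\,\,E \lhd^* \tilde{E}\}$ via the last clause of Proposition \ref{pr4.11}, are exactly the paper's steps. Your reverse inclusion is simply the contrapositive of the paper's direct argument: where the paper deduces from $\phi_{\tilde{E}}(y)=0$ for all higher $\tilde{E}$ that $y \notin \tilde{E}\cup(-\tilde{E})$ and hence, via \eqref{e4.10} and \eqref{e4.9}, that $y \in (-E)\cup(-\widehat{F}(E))\cup L_\prec \cup \widehat{F}(E)\cup E = {\rm Lin}(E)$, you instead locate the open component of $\pm y$, rule out that it lies below $E$ by the strict monotonicity of ${\rm Lin}$ from assertion 2) of Theorem \ref{th4.3}, and evaluate its functional at $y$ -- the same partition of $Y$ and the linearity of $({\mathcal O}(P_\prec), \unlhd^*)$ doing the work in both versions.
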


\begin{proof}
Suppose that ${E} \in {\mathcal O}(P_\prec)$ is not the greatest element of ${\mathcal O}(P_\prec)$.  Then the family $\{\tilde{E} \in {\mathcal O}(P_\prec) \mid {E} \lhd^* \tilde{E}\}$ is nonempty and we have, through the second assertion of Proposition \ref{pr4.11}, that $\phi_{\tilde{E}}(y)=0$ for all $y \in {\rm Lin}({E})$ whenever $\tilde{E} \in {\mathcal O}(P_\prec)$ is such that ${E} \lhd^* \tilde{E}$. Thus, the implication
$$
y \in {\rm Lin}({E}) \Longrightarrow \phi_{\tilde{E}}(y)=0\,\,\text{for all}\,\,\tilde{E} \in {\mathcal O}(P_\prec)\,\,\text{such that}\,\,{E} \lhd^* \tilde{E}
$$
is true.

Let us show that the converse implication is also true. Suppose that for some $y \in Y$ the equality $\phi_{\tilde{E}}(y) = 0$ holds for every $\tilde{E} \in {\mathcal O}(P_\prec)$ such that ${E} \lhd^* \tilde{E}$. Then it follows from \eqref{e4.18} that $y \notin (-{\tilde{E}})\cup {\tilde{E}}$ for all $\tilde{E} \in {\mathcal O}(P_\prec)$ such that ${E} \lhd^* \tilde{E}$ and hence $y~\in~(-{E})\bigcup (-{\widehat{F}}({E}) \bigcup L_\prec \bigcup {\widehat{F}}({E}) \bigcup {E}  = {\rm Lin}({E}).$

It completes the proof of the first assertion.

The validity of the equality $Y={\rm Lin}(E)$ for the greatest element $E \in ({\mathcal O}(P_\prec),\unlhd)$ follows from Theorem \ref{th4.3}, the assertions 2) and 3).
\end{proof}

\begin{proposition}\label{pr4.17}
Let $\prec$ be a compatible weak preference and let ${\mathcal F}_\prec = \{\phi_E \mid E \in {\mathcal O}(P_\prec)\}$  be the family of linear functions associated with $\prec$. Then for each  $E \in {\mathcal O}(P_\prec)$ which is not the greatest element of $({\mathcal O}(P_\prec), \unlhd^*)$ the following equality
\begin{equation}\label{e4.21}
E= \{y \in Y \mid \phi_{\tilde{E}}(y)=0\,\,\text{for all}\,\,\tilde{E} \in {\mathcal O}(P_\prec)\,\,
\text{such that}\,\,E \lhd^* \tilde{E}\,\,\text{and}\,\,\phi_E(y) > 0\}
\end{equation}
is true, while
\begin{equation}\label{e4.21a}
E= \{y \in Y \mid \phi_E(y) > 0\}
\end{equation}
when $E \in {\mathcal O}(P_\prec)$ is the greatest element of $({\mathcal O}(P_\prec), \unlhd^*)$.

Furthermore, the equality
\begin{equation}\label{e4.22}
L_E= \{y \in Y \mid \phi_{\tilde{E}}(y)=0\,\,\text{for all}\,\,\tilde{E} \in {\mathcal O}(P_\prec)\,\,
\text{such that}\,\,E \unlhd^* \tilde{E}\}
\end{equation}
holds for every $E \in {\mathcal O}(P_\prec)$.

At last,
\begin{equation}\label{e4.23}
L_\prec= \{y \in Y \mid \phi_{{E}}(y)=0\,\,\forall\,\,{E} \in {\mathcal O}(P_\prec)\}.
\end{equation}
\end{proposition}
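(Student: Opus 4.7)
The proof is essentially an assembly of the two local representations already in hand. The first ingredient is the identity \eqref{e4.18} from Proposition \ref{pr4.11}, which says that \emph{inside} ${\rm Lin}(E)$ the open component $E$ is the positivity set of $\phi_E$ and $L_E$ is its zero set. The second ingredient is the identity \eqref{e4.19} from the preceding proposition, which describes ${\rm Lin}(E)$ globally as the common zero set of the $\phi_{\tilde E}$ with $E\lhd^* \tilde E$ (when $E$ is not greatest) or as $Y$ itself (when $E$ is greatest, by assertion 3 of Theorem \ref{th4.3}). Once both are at hand, the rest is bookkeeping.

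For \eqref{e4.21} and \eqref{e4.21a} I would simply note that $y\in E$ iff $y\in{\rm Lin}(E)$ and $\phi_E(y)>0$; substituting the description of ${\rm Lin}(E)$ from \eqref{e4.19} (or $Y$ in the greatest case) immediately yields the two claimed identities. For \eqref{e4.22}, the same substitution, applied to $y\in L_E \Leftrightarrow y\in{\rm Lin}(E)\text{ and }\phi_E(y)=0$, gives the conjunction ``$\phi_{\tilde E}(y)=0$ for all $\tilde E$ with $E\lhd^*\tilde E$'' together with ``$\phi_E(y)=0$''; these two conditions merge exactly into the single condition indexed by $\tilde E$ with $E\unlhd^*\tilde E$, covering uniformly the cases of greatest and non-greatest $E$.

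The last identity \eqref{e4.23} is proved by two separate inclusions. For $L_\prec\subset\{y\mid \phi_E(y)=0\ \forall E\}$ I would invoke $L_\prec\subset L_E$ (Proposition \ref{pr4.2a}) together with the observation, built into \eqref{e4.18}, that $\phi_E$ vanishes on $L_E$; this gives $\phi_E(y)=0$ for every $E$ as soon as $y\in L_\prec$. For the converse inclusion, assume $\phi_E(y)=0$ for every $E\in\mathcal{O}(P_\prec)$. Then \eqref{e4.21}/\eqref{e4.21a} forbids $y\in E$ for any $E$, and the same argument applied to $-y$ (using $\phi_E(-y)=0$) forbids $y\in -E$; hence $y\notin P_\prec\cup(-P_\prec)$, and the disjoint decomposition $Y=(-P_\prec)\cup L_\prec\cup P_\prec$ (which characterises compatible weak preferences) forces $y\in L_\prec$.

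No step looks genuinely hard; the only mild nuisance is keeping the two regimes (greatest versus non-greatest $E$) straight, so that in the greatest case the empty family of strict dominators $\{\tilde E\mid E\lhd^*\tilde E\}$ is treated as imposing no constraints, consistently with \eqref{e4.21a}. Once this is handled, every step reduces to invoking \eqref{e4.18}, \eqref{e4.19}, Proposition \ref{pr4.2a}, and the defining disjoint decomposition of $Y$ for a compatible weak preference.
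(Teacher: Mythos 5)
Your proof is correct and follows essentially the same route as the paper's: the paper likewise derives \eqref{e4.21}, \eqref{e4.21a} and \eqref{e4.22} by combining \eqref{e4.18} with \eqref{e4.19} (using ${\rm Lin}(E)=Y$ for the greatest element), and obtains \eqref{e4.23} from \eqref{e4.22} together with the inclusion $L_\prec \subset L_E$. Your only (harmless) deviation is in the reverse inclusion of \eqref{e4.23}, where you invoke the trichotomy $Y=(-P_\prec)\cup L_\prec\cup P_\prec$ and the positivity of $\phi_E$ on $E$ rather than deducing $y\in L_E$ for every $E$ from \eqref{e4.22}; both arguments are equally immediate.
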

\begin{proof}
The equalities \eqref{e4.21} and \eqref{e4.22} follows from \eqref{e4.18} and \eqref{e4.19}. Since ${\rm Lin}(E) = Y$ when $E \in {\mathcal O}(P_\prec)$ is the greatest element of $({\mathcal O}(P_\prec), \unlhd^*)$ the equality \eqref{e4.21a} is actually the first equality from \eqref{e4.18}.

The equality \eqref{e4.23} follows from \eqref{e4.22} and the inclusion $L_\prec \subset L_E$ which holds for every $E \in  {\mathcal O}(P_\prec)$.
\end{proof}

\begin{theorem}\label{th4.18}
Let $\prec$ be a compatible weak preference and let ${\mathcal F}_\prec = \{\phi_E \mid E \in {\mathcal O}(P_\prec)\}$  be a family of linear functions associated with $\prec$.
For each $y \in Y$ either the subfamily ${\mathcal F}_\prec(y):=\{\phi_E \in {\mathcal F}_\prec \mid \phi_E(y) \ne 0\}$ is empty or ${\mathcal F}_\prec(y)$ has the least (with respect to $\unlhd\,'$) element.

Conversely, for each $\phi_E \in {\mathcal F}_\prec$ there exists $y \in Y$ such that $\phi_E$ is the least (with respect to $\unlhd\,'$) element of ${\mathcal F}_\prec(y)$.
\end{theorem}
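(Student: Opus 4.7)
The plan is to combine the vanishing property from Proposition \ref{pr4.11} with the trichotomy $Y = (-P_\prec) \cup L_\prec \cup P_\prec$ furnished by the weak-preference hypothesis and with the linearity of the ordered set $({\mathcal O}(P_\prec), \unlhd^*)$. The conceptual heart of the argument is that $\unlhd'$ is the order reverse of $\unlhd^*$, so a $\unlhd'$-least functional in a subfamily of ${\mathcal F}_\prec$ corresponds to the $\unlhd^*$-greatest open component that the subfamily detects.

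First I would dispose of the trivial case. If $y \in L_\prec$, then by identity \eqref{e4.23} every $\phi_E \in {\mathcal F}_\prec$ annihilates $y$, so ${\mathcal F}_\prec(y) = \varnothing$ and nothing further is needed. Otherwise $y$ lies in exactly one of the cones $P_\prec$ or $-P_\prec$; since $\phi_E(-y) = -\phi_E(y)$, the two possibilities are completely symmetric and I may assume $y \in P_\prec$. Let $E_y$ denote the unique open component of $P_\prec$ containing $y$. By the first equality in \eqref{e4.18}, $\phi_{E_y}(y) > 0$, so $\phi_{E_y} \in {\mathcal F}_\prec(y)$ and in particular the subfamily is non-empty.

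To show that $\phi_{E_y}$ is the $\unlhd'$-least element of ${\mathcal F}_\prec(y)$, I would take an arbitrary $\phi_{\tilde E} \in {\mathcal F}_\prec(y)$ and appeal to the linear ordering of $({\mathcal O}(P_\prec), \unlhd^*)$: either $\tilde E \unlhd^* E_y$, which by definition of $\unlhd'$ gives exactly $\phi_{E_y} \unlhd' \phi_{\tilde E}$, or $E_y \lhd^* \tilde E$. The second alternative is incompatible with $\phi_{\tilde E}(y) \ne 0$, because the ``furthermore'' clause of Proposition \ref{pr4.11}, applied with $\tilde E$ playing the role of the larger component and $E_y$ that of the strictly smaller one, yields $\phi_{\tilde E}(z) = 0$ for every $z \in {\rm Lin}(E_y)$, and in particular $\phi_{\tilde E}(y) = 0$, a contradiction.

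For the converse direction, given $\phi_E \in {\mathcal F}_\prec$, I would pick any $y \in E$ (the component $E$ being non-empty by its very definition as an equivalence class). Then $E$ is precisely the component $E_y$ containing $y$, and the argument above identifies $\phi_E = \phi_{E_y}$ as the $\unlhd'$-least element of ${\mathcal F}_\prec(y)$. No step presents a serious obstacle; the only subtlety worth flagging is keeping straight the order anti-isomorphism between $({\mathcal F}_\prec, \unlhd')$ and $({\mathcal O}(P_\prec), \unlhd^*)$ when translating ``least'' from one side to the other.
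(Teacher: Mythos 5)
Your proof is correct and takes essentially the same route as the paper's: both dispose of $y \in L_\prec$ by the vanishing of every $\phi_E$ on $L_\prec$, exhibit $\phi_{E_y} \in {\mathcal F}_\prec(y)$ for $y \notin L_\prec$ via the first equality in \eqref{e4.18}, and rule out the alternative $E_y \lhd^* \tilde{E}$ for $\phi_{\tilde{E}} \in {\mathcal F}_\prec(y)$ through the final clause of Proposition \ref{pr4.11}, with the converse handled by choosing $y \in E$. The only cosmetic difference is your explicit symmetric reduction to $y \in P_\prec$, where the paper instead works directly with $y \in E$ or $y \in -E$.
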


\begin{proof}
Since $L_\prec \subset L_E \subset {\rm Lin}(E)$ for all $E \in {\mathcal O}(P_\prec)$ (see Proposition \ref{pr4.2a}), then, through the second equality in \eqref{e4.19}, for every $E \in {\mathcal O}(P_\prec)$ we have $\phi_E(y)=0$ for all $y \in L_\prec$. Thus, ${\mathcal F}_\prec(y) = \varnothing$ for all $y \in L_\prec$.

Now, let $y \in Y \setminus L_\prec$. Then, there exists $E \in {\mathcal O}(P_\prec)$ such that either $y \in E$ or $y \in -E$ and hence, due to the first equality in \eqref{e4.18}, we have either $\phi_E(y) > 0$  or $\phi_E(y) < 0$. Consequently, $\phi_E(y) \ne 0$ for all $y \in Y \setminus L_\prec$. Moreover, it follows from \eqref{e4.21} that $\phi_{\tilde{E}}(y)=0$ for all $\phi_{\tilde{E}} \lhd' \phi_E$. Thus, $\phi_E$ is the least element of ${\mathcal F}_\prec(y)$.

Conversely, it is easy to verify that each  $\phi_E \in {\mathcal F}_\prec$ is the least element of ${\mathcal F}_\prec(y)$ with $y \in E$.
\end{proof}

\section{Analytical representation of compatible weak preferences}

\setcounter{theorem}{0} 

Let $Y$ be a real vector space and let ${\mathcal L}(Y,{\mathbb{R}})$ be a vector space of linear functions defined on $Y$.

\begin{definition}\cite{Gor-m}
We say that a family of linear functions ${\mathcal F} \subset {\mathcal L}(Y,{\mathbb{R}})$ forms \textit{a cortege of linear functions on $Y$} if

$(i)$  ${\mathcal F}$ is ordered by some linear order $\unlhd_{{\mathcal F}}$.;

$(ii)$ for each $y \in Y$ either the subfamily ${\mathcal F}_y := \{\phi \in {\mathcal F} \mid \phi(y) \ne 0\}$ is empty or ${\mathcal F}_y$ has the least (with respect to $\unlhd_{{\mathcal F}}$) element $\phi_y$;

$(iii)$ for each $\phi \in {\mathcal F}$ there exists $y \in Y$ such that  $\phi=\phi_y$, i.e., each $\phi \in {\mathcal F}$ is the least element ${\mathcal F}_y$ for some $y \in Y$.
\end{definition}
Families of linear functions satisfying conditions $(i)$ and $(ii)$ were introduced by
Klee \cite{Klee} for analytic representation of semispaces. The term ``cortege of linear functions'' for families of linear function satisfying the conditions ($i$) -- ($iii$) was introduced
by Gorokhovik \cite{Gor-m} (for various versions of condition ($iii$)) see \cite{Gor-m,GS97,GS98a,GS99,GS2000,Gor21}). In \cite{GS99,GS2000} Gorokhovik and Shinkevich introduced the concept of a cortege of affine functions generalizing the concept of a cortege of linear functions and used it for the analytical representation of arbitrary halfspaces in infinite-dimensional vector spaces.

Let $({\mathcal F},\unlhd_{\mathcal F})$ be a cortege of linear functionals on $Y$.
For each $\phi \in {\mathcal F}$ we denote by $Y_\phi$ and $\widehat{Y}_\phi$ the following vector subspaces
$$
Y_\phi := \{y \in Y \mid \phi'(y)=0\,\,\text{for all}\,\,\phi' \lhd_{\mathcal F} \phi\}
$$
and
$$
\widehat{Y}_\phi := \{y \in Y \mid \phi'(y)=0\,\,\text{for all}\,\,\phi' \unlhd_{\mathcal F} \phi\}.
$$
If $\phi \in {\mathcal F}$ is the least element of ${\mathcal F}$ we suppose $Y_\phi =Y$.

It follows from $(ii)$ that if $y \in Y$ is such that ${\mathcal F}_y \ne \varnothing$ then $y$ belongs $Y_{\phi_y}$. It implies that $\phi_y(Y_{\phi_y})= {\mathbb R}$  and, through $(iii)$, we conclude that every
$\phi \in {\mathcal F}$ is nonzero. Moreover, the following claim is true.
\begin{proposition}
Let $({\mathcal F}, \unlhd_{\mathcal F})$ be a cortege of linear functions on $Y$. Then ${\mathcal F}$ is a linearly independent family in ${\mathcal L}(Y,{\mathbb{R}})$.
\end{proposition}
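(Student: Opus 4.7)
My plan is to prove linear independence by contradiction, following the template of the analogous proposition for $\mathcal{F}_\prec$ proved earlier in the paper. Suppose there is a nontrivial finite relation $\alpha_1 \phi_1 + \alpha_2\phi_2 + \cdots + \alpha_k \phi_k = 0$ with all $\alpha_i \ne 0$; after reindexing, which is permitted because $\unlhd_{\mathcal F}$ is a linear order on $\mathcal F$ by $(i)$, we may assume $\phi_1 \lhd_{\mathcal F} \phi_2 \lhd_{\mathcal F} \cdots \lhd_{\mathcal F} \phi_k$.

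The decisive step is to apply condition $(iii)$ to the \emph{largest} function $\phi_k$ of the relation. This yields some $y \in Y$ such that $\phi_k = \phi_y$, i.e. $\phi_k$ is the least element of $\mathcal F_y$ with respect to $\unlhd_{\mathcal F}$. Unpacking this by the definition of $\mathcal F_y$ in $(ii)$: on the one hand, $\phi_k \in \mathcal F_y$ gives $\phi_k(y) \ne 0$; on the other hand, every $\psi \in \mathcal F$ strictly below $\phi_k$ in $\unlhd_{\mathcal F}$ must lie outside $\mathcal F_y$ and therefore satisfy $\psi(y) = 0$. In particular $\phi_1(y) = \phi_2(y) = \cdots = \phi_{k-1}(y) = 0$. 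Evaluating the relation at $y$ then collapses it to $\alpha_k \phi_k(y) = 0$, forcing $\alpha_k = 0$ and contradicting the choice of the combination. One can equivalently iterate this ``peel off the top'' step to conclude directly that $\alpha_k = \alpha_{k-1} = \cdots = \alpha_1 = 0$.

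I do not anticipate a genuine obstacle: the cortege axioms are engineered precisely so that condition $(iii)$ provides, for each $\phi \in \mathcal F$, a witness $y$ at which $\phi$ is active while everything below $\phi$ in the order vanishes. This is exactly the ``triangular'' structure that forces linear independence, and it mirrors verbatim the argument used for $\mathcal F_\prec$, where the role of $(iii)$ was played by the second assertion of Proposition~\ref{pr4.11} together with the existence of the separating functionals $\phi_E$ with $\phi_E(y) > 0$ for $y \in E$.
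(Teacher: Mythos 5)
Your proof is correct and essentially identical to the paper's: both apply condition $(iii)$ to the largest functional $\phi_k$ in the ordered finite subfamily to produce a witness $y$ with $\phi_k(y)\neq 0$ and $\phi_1(y)=\cdots=\phi_{k-1}(y)=0$, which forces the top coefficient to vanish. The only cosmetic difference is that you cast it as a contradiction with all coefficients assumed nonzero, whereas the paper iterates the same ``peel off the top'' step to conclude $\mu_k=\mu_{k-1}=\cdots=\mu_1=0$ directly---a variant you yourself note is equivalent.
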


\begin{proof}
Let ${\mathcal F}$ be a cortege of linear functions on $Y$ and let $\{\phi_1,\phi_2,\ldots,\phi_k\}$ be an arbitrary finite subfamily of ${\mathcal F}$. Without loss of generality we can assume that $\phi_1 \lhd_{\mathcal F} \phi_2 \lhd_{\mathcal F} \ldots \lhd_{\mathcal F} \phi_k.$ Suppose that for reals $\mu_1,\mu_2,\ldots,\mu_k$ the equality $\mu_1\phi_1 + \mu_2\phi_2+\ldots+\mu_k\phi_k=0$ holds. It follows from the property $(iii)$ of a cortege that for $\phi_k \in {\mathcal F}$ there exists a vector $y_k \in Y$ such that $\phi_k = \phi_{y_k}$ and, consequently, $\phi_k(y_k) \ne 0$. Since $f_k=f_{y_k}$ is the least element of ${\mathcal F}_{y_k}$, then for $f_1,f_2,\ldots,f_{k-1}$ the equalities $f_1(y_k) = f_2(y_k)=\ldots=f_{k-1}(y_k) =0$ hold. This implies $\sum_{i=1}^{k}\mu_i\phi_i(y_k) = \mu_k\phi_k(y_k)=0$. Since $\phi_k(y_k)\ne 0$ we obtain that $\mu_k =0$ and hence $\mu_1\phi_1 + \mu_2\phi_2+\ldots+\mu_{k-1}\phi_{k-1}=0$. Repeating the same arguments with respect to $\phi_{k-1}$ we obtain that $\mu_{k-1} = 0$. Continuing this process further we will get in $k$ steps  $\mu_1=0,\mu_2=0,\ldots,\mu_k=0$. It proves that ${\mathcal F}$ is linearly independent.
\end{proof}

\begin{definition}\cite{Gor-m,GS97,GS2000,Gor21}
A real-valued function $u:Y \to {\mathbb{R}}$ is called \textit{step-linear} if there exists a cortege of linear functions ${\mathcal F}$ such that $u(y) = u_{{\mathcal F}}(y), y \in Y$, where
$$
u_{\mathcal F}(y):=\left\{
  \begin{array}{cr}
       0,&\text{when}\,\,{{\mathcal F}}_{y} = \varnothing,\\
     \phi_y(y),&\text{when}\,\,{{\mathcal F}}_{y} \ne \varnothing.\\
  \end{array}
     \right.
$$
\end{definition}

Clearly, that any nonzero linear function $\phi$ can be considered as a step-linear one generated by the one-element cortege ${\mathcal F} =\{\phi\}$.

For a finite cortege of linear functions ${\mathcal F}:= \{\phi_1,\phi_2,\ldots,\phi_k\}$ which is ordered according to numbering ($\phi_1 \lhd_{\mathcal F} \phi_2 \lhd_{\mathcal F} \ldots \lhd_{\mathcal F} \phi_k$) the function $u_{\mathcal F}$ is defined as follows
\begin{equation*}
u_{{\mathcal F}}(y)=\left\{
  \begin{array}{ll}
       \phi_1(y),&\text{if}\,\,\phi_1(y) \ne 0,\\
       \phi_2(y),&\text{if}\,\,\phi_1(y) = 0,\phi_2(y) \ne 0,\\
       ......... & ....................................................................... \\
       \phi_{k-1}(y),&\text{if}\,\,\phi_1(y) = 0, \ldots,  \phi_{k-2}(y)=0,\phi_{k-1}(y) \ne 0,\\
       \phi_{k}(y),&\text{if}\,\,\phi_1(y) =0, \ldots, = \phi_{k-1}(y)=0.
       \end{array}
     \right.
\end{equation*}

For the first time step-linear functions (under the name ``conditionally linear functions'') were introduced in \cite{Gor-m}. Later step-linear functions as well as step-affine ones generalized them were studied in \cite{GS97,GS98a,GS99,GS2000,Gor21}.

\begin{proposition}
Every step-linear function $u:Y \to {\mathbb{R}}$ is homogeneous, i.e., $u(ty)= tu(y)$ for all $y \in Y$ and all $t \in {\mathbb{R}}$, and, moreover, for any $y,z \in Y$ the following implications hold:
\begin{equation}\label{e5.22}
u(z)=0 \Longrightarrow u(y+z)=u(y)
\end{equation}
and
\begin{equation}\label{e5.23}
u(y)>0,u(z)>0 \Longrightarrow u(y+z)> 0.
\end{equation}
\end{proposition}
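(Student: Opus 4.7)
The plan is to reduce everything to the key observation that $u_{\mathcal F}(y)=0$ if and only if $\phi(y)=0$ for every $\phi \in {\mathcal F}$. Indeed, if ${\mathcal F}_y=\varnothing$ this is immediate; and when ${\mathcal F}_y\ne \varnothing$ we have $u_{\mathcal F}(y)=\phi_y(y)$ with $\phi_y(y)\ne 0$ by the very definition of $\phi_y$. So the proof will consist of three short arguments, each obtained by tracking how ${\mathcal F}_{y+z}$ (resp.\ ${\mathcal F}_{ty}$) and its least element are built from ${\mathcal F}_y$ and ${\mathcal F}_z$.

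For homogeneity, I would split on $t=0$ (both sides vanish) and $t\ne 0$. In the latter case $\phi(ty)=t\phi(y)$, so $\phi(ty)\ne 0\Longleftrightarrow \phi(y)\ne 0$; hence ${\mathcal F}_{ty}={\mathcal F}_y$ and, in particular, $\phi_{ty}=\phi_y$ when this set is nonempty, yielding $u_{\mathcal F}(ty)=\phi_y(ty)=t\phi_y(y)=tu_{\mathcal F}(y)$.

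For the implication \eqref{e5.22}, the hypothesis $u(z)=0$ forces $\phi(z)=0$ for every $\phi\in {\mathcal F}$ by the preliminary observation, so $\phi(y+z)=\phi(y)$ for every $\phi\in {\mathcal F}$, giving ${\mathcal F}_{y+z}={\mathcal F}_y$ and, when nonempty, $\phi_{y+z}=\phi_y$; then $u_{\mathcal F}(y+z)=\phi_y(y+z)=\phi_y(y)=u_{\mathcal F}(y)$.

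The implication \eqref{e5.23} is the only nontrivial step and it is where the linear order $\unlhd_{\mathcal F}$ is essential. Since $u(y)>0$ and $u(z)>0$, the sets ${\mathcal F}_y$ and ${\mathcal F}_z$ are nonempty and admit least elements $\phi_y$ and $\phi_z$. I would compare them by the total order $\unlhd_{\mathcal F}$. If $\phi_y=\phi_z=:\phi$, then every $\phi' \lhd_{\mathcal F}\phi$ vanishes on both $y$ and $z$, hence on $y+z$, while $\phi(y+z)=\phi(y)+\phi(z)>0$; therefore $\phi_{y+z}=\phi$ and $u(y+z)=\phi(y+z)>0$. If $\phi_y\ne \phi_z$, by symmetry assume $\phi_y\lhd_{\mathcal F}\phi_z$; then by minimality of $\phi_z$ we have $\phi_y(z)=0$, while every $\phi'\lhd_{\mathcal F}\phi_y$ satisfies $\phi'(y)=0$ and also $\phi'(z)=0$ (since $\phi'\lhd_{\mathcal F}\phi_z$). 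Hence $\phi_y(y+z)=\phi_y(y)>0$ and no $\phi'\lhd_{\mathcal F}\phi_y$ lies in ${\mathcal F}_{y+z}$, so $\phi_{y+z}=\phi_y$ and $u(y+z)=\phi_y(y)=u(y)>0$. The main (mild) obstacle is exactly this case distinction: it requires linearity of $\unlhd_{\mathcal F}$ to compare $\phi_y$ with $\phi_z$, and it requires property $(ii)$ of the cortege twice, first to define $\phi_y,\phi_z$ and again to identify $\phi_{y+z}$.
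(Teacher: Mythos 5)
Your proposal is correct and follows essentially the same route as the paper's proof: the same observation that $u(z)=0$ forces $\phi(z)=0$ for all $\phi\in{\mathcal F}$, and the same three-way comparison of $\phi_y$ and $\phi_z$ under the linear order $\unlhd_{\mathcal F}$ to identify $\phi_{y+z}$ in the proof of \eqref{e5.23}. You merely spell out a few steps the paper leaves implicit (e.g.\ that ${\mathcal F}_{ty}={\mathcal F}_y$ for $t\ne 0$, and that $\phi_y(z)=0$ when $\phi_y\lhd_{\mathcal F}\phi_z$), so there is nothing substantively different to compare.
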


\begin{proof}
Let $u:Y \to {\mathbb{R}}$ be a step-linear function and let ${\mathcal F}$ be a cortege of linear functions on $Y$ such that $u(y) = u_{\mathcal F}(y)$ for all $y \in Y$.

The homogeneity of $u$ follows from the homogeneity of the linear functions belonging to the cortege ${\mathcal F}$.

To prove the implications \eqref{e5.22} and \eqref{e5.23} we first observe that through additivity of linear functions  the equality
$\phi(y+z) = \phi(y)+\phi(z)$
holds for any $y,z \in Y$ and for all $\phi \in {\mathcal F}$.

If $u(z)=0$ then $\phi(z) = 0$ for all $\phi \in {\mathcal F}$ and hence  $\phi(y+z) = \phi(y)$ for all $\phi \in {\mathcal F}$. This proves \eqref{e5.22}.

Assume that $u(y)>0,u(z)>0$. If $\phi_y \lhd_{\mathcal{F}} \phi_z$ then $\phi(y+z)=\phi(y)+\phi(z)=0$ for all $\phi \lhd_{\mathcal{F}} \phi_y$ and $\phi_y(y+z)=\phi_y(y)+\phi_y(z) = \phi_y(y)$. Thus, $\phi_{y+z}=\phi_y$ and hence $u(y+z)=u(y)>0$. Similarly, if $\phi_y \lhd_{\mathcal{F}} \phi_z$ we get that $u(y+z)=u(z)>0$. At last, if $\phi_y = \phi_z$, we have that $\phi(y+z)=\phi(y)+\phi(z)=0$ for all $\phi \lhd_{\mathcal{F}} \phi_y=\phi_z$ and $\phi_y(y+z)=\phi_y(y)+\phi_y(z)=\phi_y(y)+\phi_z(z)>0$. Thus, in this case we have that $\phi_{y+z}=\phi_y=\phi_z$ and $u(y+z)=u(y)+u(z)>0$. This completes the proof of the implication \eqref{e5.23}.
\end{proof}

\begin{theorem}\label{th5.7}
A binary relation $\prec \,\subset Y \times Y$ is a compatible weak preference on $Y$ if and only if there exists a step-linear function $u:Y \to {\mathbb{R}}$ such that
\begin{equation}\label{e5.27}
y \prec z \Longleftrightarrow u(z-y) > 0
\end{equation}
and
\begin{equation}\label{e5.28}
y \sim z \Longleftrightarrow u(z-y) = 0.
\end{equation}
\end{theorem}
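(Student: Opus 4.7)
The plan is to prove both directions separately, with Section~4 already doing most of the work for necessity and the properties of step-linear functions (homogeneity, \eqref{e5.22}, \eqref{e5.23}) doing most of the work for sufficiency.

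For the necessity direction, I would take the family $\mathcal{F}_\prec = \{\phi_E \mid E \in \mathcal{O}(P_\prec)\}$ constructed in Proposition~\ref{pr4.11}, ordered by $\unlhd\,'$, and observe that Theorem~\ref{th4.18} verifies exactly conditions $(i)$--$(iii)$ of the cortege definition. Hence the associated step-linear function $u := u_{\mathcal{F}_\prec}$ is well-defined. It then remains to translate the analytic descriptions in Proposition~\ref{pr4.17} into statements about $u$. Concretely, for $w \in Y$ I would consider three cases: if $w \in L_\prec$, then by \eqref{e4.23} every $\phi_E$ vanishes on $w$, so $\mathcal{F}_\prec(w)=\varnothing$ and $u(w)=0$; if $w \in E$ for some $E \in \mathcal{O}(P_\prec)$, then by \eqref{e4.21} (or \eqref{e4.21a}) $\phi_E$ is the least element of $\mathcal{F}_\prec(w)$ with $\phi_E(w)>0$, so $u(w)=\phi_E(w)>0$; and if $w \in -E$, then applying the previous case to $-w$ combined with linearity of each $\phi_E$ gives $u(w)<0$. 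Since $Y = P_\prec \cup L_\prec \cup (-P_\prec)$ (disjoint) for a compatible weak preference, this yields $u(w) > 0 \iff w \in P_\prec$ and $u(w) = 0 \iff w \in L_\prec$. Combined with $y \prec z \iff z - y \in P_\prec$ and $y \sim z \iff z - y \in L_\prec$ (the latter by \eqref{e3a}), this gives \eqref{e5.27} and \eqref{e5.28}.

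For the sufficiency direction, I would assume a step-linear function $u$ is given and define $\prec$ by \eqref{e5.27}, then verify that $\prec$ is a compatible weak preference. Compatibility with scalar multiplication and addition follows at once from the homogeneity of $u$ and the fact that $u(z-y)$ depends only on the difference $z-y$. Asymmetry follows because $u(y-z) = -u(z-y)$ by homogeneity (taking $t=-1$), so $u(z-y) > 0$ forces $u(y-z) < 0$. Transitivity follows from \eqref{e5.23}: if $u(y_2-y_1) > 0$ and $u(y_3-y_2) > 0$ then $u((y_2-y_1)+(y_3-y_2)) = u(y_3-y_1) > 0$. To see that $\prec$ is a \emph{weak} preference and to establish \eqref{e5.28}, I would show that the indifference relation $\asymp$ coincides with the level set $\{(y,z) \mid u(z-y)=0\}$: indeed $y \asymp z$ means $u(z-y) \le 0$ and $u(y-z) \le 0$, which by homogeneity collapses to $u(z-y) = 0$. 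Transitivity of $\asymp$ then follows from \eqref{e5.22}: if $u(z-y)=0$ and $u(w-z)=0$, applying \eqref{e5.22} with the roles of $y$ and $z$ played by $w-z$ and $z-y$ gives $u(w-y) = u((w-z)+(z-y)) = u(z-y) = 0$. Since $\asymp$ is transitive, $\prec$ is a weak preference, and the characterization of $\asymp$ above together with $\asymp = \sim$ yields \eqref{e5.28}.

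The only conceptually delicate point in this proof is the passage from the weak preference $\prec$ to the cortege $(\mathcal{F}_\prec,\unlhd\,')$, but that passage is already handled by the results of Section~4 (in particular Propositions~\ref{pr4.11}, \ref{pr4.17} and Theorem~\ref{th4.18}). Given those results, the present theorem reduces to a careful bookkeeping of positivity, negativity, and vanishing of $u$ on the three pieces $P_\prec$, $L_\prec$, $-P_\prec$ that partition $Y$ for a compatible weak preference. No step requires a new construction; the main work is to keep straight the anti-isomorphism between $(\mathcal{O}(P_\prec),\unlhd^*)$ and $(\mathcal{F}_\prec,\unlhd\,')$ when identifying $\phi_w$ with the functional $\phi_E$ associated to the unique open component $E$ containing $w$ (or $-w$).
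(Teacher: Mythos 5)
Your proposal is correct and follows essentially the same route as the paper's own proof: the necessity direction invokes the cortege $(\mathcal{F}_\prec,\unlhd\,')$ built in Propositions~\ref{pr4.11}, \ref{pr4.17} and Theorem~\ref{th4.18}, and the sufficiency direction derives compatibility, asymmetry, transitivity and negative transitivity from homogeneity together with \eqref{e5.22} and \eqref{e5.23}. The only difference is that you spell out the case analysis over $P_\prec$, $L_\prec$, $-P_\prec$ that the paper compresses into ``it is not difficult to verify,'' which is a harmless (indeed welcome) elaboration.
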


\begin{proof}
It was shown in Section 4 (see the paragraph below Proposition \ref{pr4.11}) that each compatible weak preference $\prec$ can be associated with the family of linear functions ${\mathcal{F}}_\prec := \{\phi_E \mid E \in {\mathcal O}(P_\prec), \phi_E \,\,\text{satisfies}\,\,\eqref{e4.18}\}$. It follows from Theorem \ref{th4.18} that ${\mathcal{F}}_\prec$ ordered by the linear order $\unlhd\,'$ is a cortege of linear functions on $Y$. Using the equivalence $y \prec z \Longleftrightarrow z-y \in P_\prec = \bigsqcup\{E \mid E \in {\mathcal O}(P_\prec)\}$ ($\bigsqcup$ denotes a disjoint union) and the characterization of $E \in {\mathcal O}(P_\prec)$ given in Proposition \ref{pr4.17} it is not difficult to verify that for the step-linear function $u_\prec : Y \to {\mathbb{R}}$ generated by the cortege ${\mathcal{F}}_\prec$ we have $y \prec z \Longleftrightarrow u_\prec(z-y) > 0$ and $y \sim z \Longleftrightarrow u_\prec(z-y) = 0$.

Conversely, let $u:Y \to {\mathbb{R}}$ be a step-linear function and let a binary relation $\prec \,\subset Y\times Y$ be defined by \eqref{e5.27}. It follows directly from the definition of $\prec$ and from the homogeneity of $u$ that the relation $\prec$ is compatible with algebraic operations of $Y$. Furthermore, the homogeneity of $u$ implies that $\prec$ is asymmetric, while the transitivity of $\prec$ is the corollary of the implication \eqref{e5.23}. Thus, the relation $\prec$ is a compatible partial preference. The indifference relation $\asymp$ generated by $\prec$ is defined as follows: $y \asymp z \Longleftrightarrow u(y-z) =0$. From this, using the implication \eqref{e5.22}, it is easily deduced  that $\asymp$ is transitive and hence $\prec$ is actually a compatible weak preference. Furthermore, since $\asymp = \sim$ for compatible weak preferences we conclude that \eqref{e5.28} also is true.
\end{proof}

In the next theorem we describe the class of such compatible weak preferences for which in \eqref{e5.27} and \eqref{e5.28} step-linear functions can be replaced by linear functions.

\begin{theorem}\label{th5.8}
Let $\prec$ be a compatible weak preference defined on a real vector space $Y$. Then the following statements are equivalent:
\begin{itemize}
\item[(i)]
the positive cone $P_\prec$ is algebraic open, i.e., ${\rm cr}P_\prec \ne \varnothing$ and $P_\prec = {\rm cr}P_\prec$;

\item[(ii)]
the family of open components of $P_\prec$ is a singleton, that is ${\mathcal O}(P_\prec) = \{P_\prec\}$;

\item[(iii)]
there exists a linear function $\phi:Y \to {\mathbb{R}}$ such that
\begin{equation}\label{e5.29}
y \prec z \Longleftrightarrow \phi(y) < \phi(z)\,\,\text{and}\,\,y \sim z \Longleftrightarrow \phi(y) = \phi(z).
\end{equation}
\end{itemize}
\end{theorem}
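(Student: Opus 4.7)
The plan is to prove the three implications (i) $\Rightarrow$ (ii) $\Rightarrow$ (iii) $\Rightarrow$ (i), each following cleanly from material developed in Sections 3 and 4.

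For (i) $\Rightarrow$ (ii), I would start from the standard inclusion ${\rm cr}P_\prec \subseteq {\rm icr}P_\prec \subseteq P_\prec$. Hypothesis (i) forces $P_\prec = {\rm icr}P_\prec$, so Theorem \ref{th3.2} yields $P_\prec = P_\pprec$: every positive vector is strongly positive. By definition of $\pprec$, for any $y,z \in P_\prec$ there exist $\mu,\nu > 0$ with $\mu y \prec z$ and $\nu z \prec y$, i.e.\ $y \eq z$. Hence $P_\prec$ consists of a single $\eq$-equivalence class, so $\mathcal{O}(P_\prec) = \{P_\prec\}$.

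For (ii) $\Rightarrow$ (iii), assuming $\mathcal{O}(P_\prec) = \{P_\prec\}$, I apply Proposition \ref{pr4.11} with $E = P_\prec$ to obtain a nonzero linear function $\phi : Y \to {\mathbb{R}}$ satisfying
\[
P_\prec = \{y \in {\rm Lin}(P_\prec) \mid \phi(y) > 0\}, \qquad L_\prec = \{y \in {\rm Lin}(P_\prec) \mid \phi(y) = 0\}.
\]
The key observation that extends these descriptions from ${\rm Lin}(P_\prec)$ to all of $Y$ is assertion 3) of Theorem \ref{th4.3}: $Y = \bigcup\{{\rm Lin}(E) \mid E \in \mathcal{O}(P_\prec)\} = {\rm Lin}(P_\prec)$. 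Combined with the disjoint decomposition $Y = (-P_\prec)\cup L_\prec \cup P_\prec$ (valid because $\prec$ is a weak preference), one obtains $-P_\prec = \{y \in Y \mid \phi(y) < 0\}$. The equivalences in \eqref{e5.29} then fall out from $y \prec z \iff z-y \in P_\prec$ and $y \sim z \iff z-y \in L_\prec$ by linearity of $\phi$.

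For (iii) $\Rightarrow$ (i), I would argue directly. Fix any $z \in P_\prec$, so $\phi(z) > 0$. For an arbitrary $y \in Y$ one has $\phi(z+ty) = \phi(z) + t\phi(y)$, which stays strictly positive for all $t$ in some symmetric interval around $0$ (take $\delta = \phi(z)/(|\phi(y)|+1)$, say). Thus $z + ty \in P_\prec$ for $t \in (-\delta,\delta)$, showing $z \in {\rm cr}P_\prec$; hence $P_\prec = {\rm cr}P_\prec$, and this set is nonempty since $\phi$ is nonzero (which is implicit: otherwise $\prec$ would be empty and $P_\prec$ could not be the positive cone of a nontrivial weak preference).

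I do not anticipate any real obstacles: the three implications are all short. The only mild point of care is step (ii) $\Rightarrow$ (iii), where one must remember that a singleton $\mathcal{O}(P_\prec)$ forces ${\rm Lin}(P_\prec) = Y$ before upgrading Proposition \ref{pr4.11} to a global analytic representation by a single linear functional.
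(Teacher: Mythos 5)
Your proof is correct, and its substance coincides with the paper's, though the architecture differs: the paper establishes the two bi-implications $(i)\Leftrightarrow(ii)$ and $(i)\Leftrightarrow(iii)$, whereas you run the cycle $(i)\Rightarrow(ii)\Rightarrow(iii)\Rightarrow(i)$. Your $(i)\Rightarrow(ii)$ is the paper's argument essentially verbatim (Theorem \ref{th3.2} gives $P_\prec = P_\pprec$, hence all positive vectors are pairwise $\eq$-equivalent and $\mathcal{O}(P_\prec)$ is a singleton). The genuine divergence is in how $\phi$ is produced: the paper argues $(i)\Rightarrow(iii)$ directly, asserting that the algebraically open halfspace $P_\prec$ is bounded by the homogeneous hyperplane $L_\prec$ and that a nonzero linear function with the required level sets ``therefore'' exists, a claim it leaves undetailed; you instead derive $(ii)\Rightarrow(iii)$ from Proposition \ref{pr4.11} (already proved via the separation result cited from Aliprantis--Border) together with assertion 3) of Theorem \ref{th4.3}, which with a singleton $\mathcal{O}(P_\prec)$ yields ${\rm Lin}(P_\prec)=Y$ and upgrades the representation to a global one --- this makes the step fully self-contained within the paper's own machinery, and your explicit computation for $(iii)\Rightarrow(i)$ fills in what the paper dismisses as ``easily verified.'' One pedantic caveat, which you half-acknowledge: statement (iii) as printed does not require $\phi \neq 0$, and for the degenerate empty preference ($\phi = 0$, $\prec = \varnothing$, $P_\prec = \varnothing$) condition (iii) holds while (i) fails since ${\rm cr}\,\varnothing = \varnothing$; your parenthetical disposing of this presumes nontriviality of $\prec$, but since both (i) and (ii) already force $P_\prec \neq \varnothing$ and the paper operates under the same implicit assumption, this is not a gap relative to the paper's own standard of rigor.
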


\begin{proof}
$(i) \Longleftrightarrow (ii)$. Let the statement $(i)$ be satisfied. Since ${\rm cr}P_\prec \ne \varnothing$, then ${\rm icr}P_\prec = {\rm cr}P_\prec$ and, hence, by Theorem \ref{th3.2} we have $P_\pprec = P_\prec$. Recall, that $P_\pprec := \{y \in P_\prec \mid z \unlhd y\,\,\forall\,\,z \in P_\prec\}$ is the set of strongly positive vectors and, consequently, through the equality $P_\pprec = P_\prec$ we obtain that $y \eq z$ for all $y,z \in P_\prec$. It proves that ${\mathcal O}(P_\prec) = \{P_\prec\}$.

Conversely, let ${\mathcal O}(P_\prec) = \{P_\prec\}$. Since each open component of $P_\prec$ is relatively algebraic open then $P_\prec = {\rm icr}P_\prec$. But, $\prec$ is a compatible weak preference, therefore $Y = P_\prec \bigcup L_\prec \bigcup (-P_\prec)$ and, consequently, ${\rm Lin}(P_\prec) =Y$. It implies that ${\rm icr}P_\prec = {\rm cr}P_\prec = P_\prec$. Thus, $P_\prec$ is algebraic open.

The proof of the equivalence $(i) \Longleftrightarrow (ii)$ is complete.

$(i) \Longleftrightarrow (iii)$. It follows from $(i)$ and the equality $Y = P_\prec \bigcup L_\prec \bigcup (-P_\prec)$ that $P_\prec$ is algebraic open halfspace in $Y$ and  $L_\prec$ is a homogeneous hyperplane which is the boundary of $P_\prec$. From this we conclude that there exists a nonzero linear function $\phi: Y \to {\mathbb{R}}$ such that $L_\prec =\{y \in Y \mid \phi(y)=0\}$ and $P_\prec = \{y \in Y \mid \phi(y)>0\}$. It follows from \eqref{e3} and \eqref{e3a} that the function $\phi$ satisfies \eqref{e5.29}.
This completes the proof of the implication $(i) \Longrightarrow (iii)$.

It is easily verified that the converse implication $(iii) \Longrightarrow (i)$ also is true.
\end{proof}

\begin{example}
Let a binary relation $\prec$ be defined on ${\mathbb{R}}^3$ as follows: $(y^1,y^2,y^3) \prec (z^1,z^2,z^3) \Longleftrightarrow y^1<z^1\,\,\text{or}\,\,y^1=z^1,y^2<z^2$. It is easy to verify that $\prec$ is a compatible weak preference with the equipotency relation corresponding $\prec$ being defined by $(y^1,y^2,y^3) \prec (z^1,z^2,z^3) \Longleftrightarrow y^1=z^1, y^2=z^2$. For this compatible weak preference $\prec$ the step-linear function $u: {\mathbb{R}}^3 \to {\mathbb{R}}$ such that $u(y^1,y^2,y^3) = y^1$ when $y^1 \ne 0$ and $u(y^1,y^2,y^3) = y^2$ when $y^2 = 0$
satisfies \eqref{e5.27} and \eqref{e5.28}, while there is no linear function which satisfies \eqref{e5.29}. Note, that ${\rm cr}P_\prec = \{(y^1,y^2,y^3) \in {\mathbb{R}}^3 \mid y^1 > 0\}$, while $P_\prec = \{(y^1,y^2,y^3) \in {\mathbb{R}}^3 \mid y^1 > 0\,\,\text{or}\,\,y^1=0,y^2>0\} \ne {\rm cr}P_\prec$ and, consequently, the statement $(i)$ from Theorem \ref{th5.8} does not hold.
\end{example}

\section{Extension of compatible partial preferences by compatible weak preferences}

We say that a partial preference $\prec_1$ \textit{extends} a partial preference $\prec_2$ if $\prec_2\,\subseteq\,\prec_1$ and we say that a partial preference $\prec_1$ \textit{regularly extends} a partial preference $\prec_2$ if $\prec_2\,\subseteq\,\prec_1$ and, in addition, $\sim_2\,\subseteq\,\sim_1$. Clearly, that a compatible partial preference $\prec_1$ (regularly) extends a compatible partial preference $\prec_2$ if and only if $P_{\prec_2}\,\subseteq\,P_{\prec_1}$ (and $L_{\prec_2}\,\subseteq\, L_{\prec_1})$.

\begin{example}
Consider two partial preferences $\prec_1$ and $\prec_2$ defined on ${\mathbb R}^3$ as follows: $(y_1,y_2,y_3) \prec_1  (z_1,z_2,z_3) \Longleftrightarrow (y_1 < z_1, y_2 < z_2)\vee (y_1 = z_1, y_2 = z_2, y_3 < z_3)$ and $(y_1,y_2,y_3) \prec_2  (z_1,z_2,z_3) \Longleftrightarrow y_1 < z_1, y_2 < z_2$. It is easy to see that $\prec_1$ extends $\prec_2$ but the extension $\prec_2$ to $\prec_1$ is not regular.
\end{example}

To prove the existence of regular compatible extensions for each compatible partial preference we need the following theorem on separation of convex cones by conical halfspaces \cite{GS98a,Gor21} which is presented here without proof.

\begin{theorem}\label{th6.1}
Let  $K_{1}$ and $K_{2}$ be convex cones in a real vector space $Y$, and let $K_{1}$ be asymmetric. Then
$K_{1}\cap K_{2}=\varnothing$ if and only if there exists an asymmetric conical halfspace $H \subset Y$ such that
$K_{1}\subset H$ and $K_2 \subset Y\setminus H$ or,
 equivalently, if and only if there exists a step-linear function
$u : Y \rightarrow {\mathbb R}$ such that
$$
u\,(x) > 0\,\,\text{for all}\,\,y\in K_{1},\quad
u\,(x)\leq 0\,\,\text{for all}\,\,y\in K_{2}.
$$
\end{theorem}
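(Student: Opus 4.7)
The plan is to split the biconditional into three pieces and tackle them separately. The easy directions are: ``halfspace separation $\Rightarrow$ $K_1\cap K_2=\varnothing$'' is immediate from $H\cap(Y\setminus H)=\varnothing$, and ``step-linear separation $\Rightarrow$ halfspace separation'' follows by letting $H:=\{y\in Y\mid u(y)>0\}$, because the sublevel structure of a step-linear function described in Section~5 makes $H$ asymmetric and shows that $Y\setminus H=\{u\le 0\}=(-H)\cup\{u=0\}$ is convex. Conversely, from halfspace separation we get step-linear separation by invoking Theorem~\ref{th5.7}: the compatible weak preference $\prec_H$ defined by $y\prec_H z\Leftrightarrow z-y\in H$ admits a step-linear representation $u$, which satisfies $u>0$ on $K_1\subset H$ and $u\le 0$ on $K_2\subset Y\setminus H=(-H)\cup L_{\prec_H}$.

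Thus the substantive claim is the existence of an asymmetric conical halfspace $H$ with $K_1\subset H$ and $K_2\cap H=\varnothing$ whenever $K_1\cap K_2=\varnothing$. The plan is a standard Zorn's lemma argument. Let $\mathcal{A}$ be the collection of asymmetric convex cones $C\subset Y$ with $K_1\subseteq C$ and $C\cap K_2=\varnothing$. The family $\mathcal{A}$ contains $K_1$, and the union of any chain in $\mathcal{A}$ is again convex (union of a chain of convex sets), again a cone (positive homogeneity is preserved), asymmetric (any would-be pair $c,-c$ in the union lies in a common member of the chain), and disjoint from $K_2$. Zorn's lemma therefore produces a maximal element $H\in\mathcal{A}$.

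The main obstacle is proving that this maximal $H$ is a conical halfspace, equivalently (by the characterization recalled in Section~2.2) that $L:=Y\setminus(H\cup(-H))$ is a vector subspace of $Y$. The key subclaim is: for every $y\in Y$, either $y\in H$, or $-y\in H$, or $y\in L$. Suppose $y\notin H$ and $-y\notin H$. Form the convex cone $C_y$ generated by $H\cup\{y\}$, explicitly $C_y=\{h+\lambda y\mid h\in H\cup\{0\},\ \lambda\ge 0\}\setminus\{0\}$, and similarly $C_{-y}$. By maximality of $H$, each of $C_y,C_{-y}$ must either fail to be asymmetric or meet $K_2$. A short case analysis (using $H+H\subset H$ and $K_1\subset H$, $K_1\cap K_2=\varnothing$) rules out all outcomes except the situation that already forces $y$ to satisfy $h+ty\in H$ for every $h\in H$ and every $t\in\mathbb{R}$, i.e.\ $y\in L_H\subseteq L$. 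Once this ternary decomposition $Y=H\sqcup L\sqcup(-H)$ is established, showing that $L$ is closed under addition and scalar multiplication reduces to another round of the same alternative argument: if $y_1,y_2\in L$ but $y_1+y_2\in H$, then enlarging $H$ by $-y_1$ (or $-y_2$) contradicts maximality; scalar closure is automatic from $L=-L$ and conicity. This yields $L=L_H$ and hence $H$ is an asymmetric conical halfspace.

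I expect the delicate bookkeeping in the case analysis of step~3 to be the principal difficulty: one must carefully handle the coefficients $\lambda_1,\lambda_2$ and the possibility that the ``$H$-components'' of the offending combinations are zero, and exploit the hypothesis $K_1\subset H$ (used to exclude the sub-case where adding $\pm y$ to $H$ would create an element of $K_2$ only via a ``trivial'' combination from $K_1$). Once the halfspace is in hand, the passage to a step-linear representative is a direct application of Theorem~\ref{th5.7} as sketched in the first paragraph, which automatically delivers the separation inequalities $u|_{K_1}>0$ and $u|_{K_2}\le 0$.
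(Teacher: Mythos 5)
The paper does not actually prove Theorem~\ref{th6.1}: it is explicitly ``presented here without proof'' with a pointer to \cite{GS98a,Gor21}, so your attempt has to stand on its own against the classical Kakutani--Tukey-type argument it is modelled on. Your outer reductions are sound and match the paper's machinery: separation by an asymmetric conical halfspace and by a step-linear function are interchangeable via Theorem~\ref{th5.7} together with the homogeneity of step-linear functions, and the Zorn's lemma setup --- a maximal element $H$ of the family $\mathcal{A}$ of asymmetric convex cones $C$ with $K_1\subseteq C$ and $C\cap K_2=\varnothing$, chains being handled correctly --- is the right skeleton.

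The gap is in the core step, that a maximal $H\in\mathcal{A}$ is a conical halfspace. For $y$ with $\pm y\notin H$, both cones $C_{y}$ and $C_{-y}$ are automatically asymmetric (this needs only $H+H\subseteq H$ and $\pm y\notin H$), so maximality yields exactly one piece of information: \emph{both} $C_y$ and $C_{-y}$ meet $K_2$. This does not ``force $y\in L_H$'' as you assert; what it forces, via a combination trick absent from your sketch, is $y\in K_2\cap(-K_2)$. Indeed, pick $h_1+\lambda_1 y\in K_2$ and $h_2-\lambda_2 y\in K_2$ with $h_i\in H\cup\{0\}$ and $\lambda_i>0$ (positivity because $H\cap K_2=\varnothing$ and $0\notin C_{\pm y}$); then $\lambda_2 h_1+\lambda_1 h_2=\lambda_2(h_1+\lambda_1 y)+\lambda_1(h_2-\lambda_2 y)$ lies in $K_2$, and if some $h_i\neq 0$ it lies in $H$, contradicting $H\cap K_2=\varnothing$; hence $h_1=h_2=0$ and $\pm y\in K_2$. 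Your additivity step then fails outright: if $y_1,y_2\in L:=Y\setminus(H\cup(-H))$ and $y_1+y_2\in H$, ``enlarging $H$ by $-y_1$'' produces no contradiction with maximality, because maximality only says the enlarged cone leaves $\mathcal{A}$, and it legitimately does so by meeting $K_2$ --- indeed $-y_1$ itself lies in $K_2$ by the previous observation. Your closing remark that the case analysis must exploit $K_1\subset H$ and $K_1\cap K_2=\varnothing$ is a symptom of the same confusion: $K_1$ plays no role beyond seeding $\mathcal{A}$.

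The repair needs no second appeal to maximality. The combination argument gives $L\setminus\{0\}\subseteq M:=K_2\cap(-K_2)$, and conversely $M\subseteq L$ since $M\cap H\subseteq K_2\cap H=\varnothing$ and $M\cap(-H)=-(M\cap H)=\varnothing$ because $M=-M$; hence $L=M\cup\{0\}$. Since $M$ is a symmetric convex cone, $M\cup\{0\}$ is closed under all scalar multiples and, by convexity, under addition, so $L$ is a vector subspace; the characterization recalled in Subsection~2.2 (Lemma~\ref{l4.1} with $Z=Y$) then says precisely that $H$ is an asymmetric conical halfspace with $L_H=L$, and only at that point does your statement $h+ty\in H$ for all $h\in H$, $t\in\mathbb{R}$, $y\in L$ become available. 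With this replacement of your step~3 and of the additivity argument, the proof goes through.
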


Note that this theorem is the version of the Kakutani-Tukey theorem on separation of convex sets by halfspaces \cite{Kakutani,Tukey,HF}.

\begin{theorem}\label{th6.2}
Each compatible partial preference $\prec\,\subset Y\times Y$ can be regularly extended to a compatible weak preference $\prec'\, \subset Y \times Y$.
\end{theorem}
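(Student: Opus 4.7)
The plan is to apply the Kakutani--Tukey-type separation theorem (Theorem~\ref{th6.1}) to separate the positive cone $P_\prec$ from the vector subspace $L_\prec$ associated with it, and then to take the compatible weak preference whose positive cone is the resulting asymmetric conical halfspace.

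First I would verify the disjointness needed to invoke Theorem~\ref{th6.1}, namely $P_\prec\cap L_\prec=\varnothing$. This uses asymmetry of $\prec$: if $h\in P_\prec\cap L_\prec$, then since $L_\prec$ is a vector subspace, $-h\in L_\prec$, and by the defining property of $L_\prec$ we have $h+(-h)=0\in P_\prec$, contradicting the irreflexivity forced by asymmetry of $\prec$ (equivalently, $P_\prec\cap(-P_\prec)=\varnothing$). Thus $P_\prec$ and $L_\prec$ are disjoint convex cones, and $P_\prec$ is asymmetric by hypothesis.

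Next I would apply Theorem~\ref{th6.1} with $K_1:=P_\prec$ and $K_2:=L_\prec$ to obtain an asymmetric conical halfspace $H\subset Y$ with $P_\prec\subset H$ and $L_\prec\subset Y\setminus H$. Define $\prec'$ to be the compatible binary relation whose positive cone is $H$; since $H$ is an asymmetric conical halfspace, $\prec'$ is a compatible weak preference (as noted at the end of Subsection~\ref{sec-2.2}), and the inclusion $P_\prec\subset H=P_{\prec'}$ gives $\prec\,\subseteq\,\prec'$.

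The delicate point, which is the main thing to check, is regularity of the extension, that is, $L_\prec\subset L_{\prec'}$. Recall from Subsection~\ref{sec-2.2} that for an asymmetric conical halfspace $H$ one has $L_H=Y\setminus(H\cup(-H))$. The separation already yields $L_\prec\cap H=\varnothing$; to obtain also $L_\prec\cap(-H)=\varnothing$ I use that $L_\prec$ is a vector subspace, hence $L_\prec=-L_\prec$: if there were some $h\in L_\prec\cap(-H)$, then $-h\in L_\prec\cap H$, contradicting the separation. Therefore $L_\prec\subset Y\setminus(H\cup(-H))=L_H=L_{\prec'}$, and in view of \eqref{e3a} this means $\sim\,\subseteq\,\sim'$, so $\prec'$ regularly extends $\prec$.
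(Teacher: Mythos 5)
Your proof is correct and follows essentially the same route as the paper: separate $P_\prec$ from $L_\prec$ via Theorem~\ref{th6.1} and take $\prec'$ with $P_{\prec'}=H$. You merely spell out the details the paper leaves as ``easy to see'' --- the disjointness $P_\prec\cap L_\prec=\varnothing$ (already noted in Subsection~\ref{sec-2.2} as equivalent to asymmetry of the cone) and the regularity check $L_\prec\subset L_H=Y\setminus(H\cup(-H))$ using $L_\prec=-L_\prec$ --- both of which you handle correctly.
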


\begin{proof}
Consider a compatible partial preference $\prec$ defined on $Y$. Let $P_\prec$ be the positive cone of $\prec$ and let $L_\prec$ be the vector subspace associated with $P_\prec$. Since $P_\prec\bigcap L_\prec = \varnothing$, then, by Theorem \ref{th6.1} there exists an asymmetric conical halfspace $H \subset Y$ such that $P_\prec \subset H$ and $L_\prec \subset Y \setminus H$. It is easy to see that a compatible weak preference $\prec'$    such that $P_{\prec'} = H$ regularly extends $\prec$.
\end{proof}

Theorem \ref{th6.2} is related to the Szpilrajn theorem \cite{Szpilrajn} on the extension of a partial order to a linear order.

\begin{theorem}\label{th6.3}
For any compatible partial preference $\prec$ defined on a real vector space $Y$ the family ${\mathcal U}_\prec$ of all step-linear function $u: Y \to {\mathbb{R}}$ such that
\begin{equation}\label{e6.1}
y \prec z  \Longrightarrow u(z-y)>0,
\end{equation}
and
\begin{equation}\label{e6.2}
y \sim z \Longrightarrow u(z-y)= 0.
\end{equation}
is nonempty and, moreover,
\begin{equation}\label{e6.3}
y \prec z  \Longleftrightarrow u(z-y)>0\,\,\text{for all}\,\,u \in {\mathcal U}_\prec
\end{equation}
and
\begin{equation}\label{e6.4}
y \sim z \Longleftrightarrow u(z-y)= 0\,\,\text{for all}\,\,u \in {\mathcal U}_\prec.
\end{equation}
\end{theorem}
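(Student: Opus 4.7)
The plan is to combine Theorem~\ref{th6.2} (each compatible partial preference admits a regular extension to a compatible weak preference), Theorem~\ref{th5.7} (analytical representation of a compatible weak preference by a step-linear function), and Theorem~\ref{th6.1} (separation of convex cones by asymmetric conical halfspaces).

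First I would establish nonemptiness of $\mathcal{U}_\prec$ together with the two ``$\Longleftarrow$'' directions of \eqref{e6.3}--\eqref{e6.4}. Applying Theorem~\ref{th6.2} yields a compatible weak preference $\prec'$ regularly extending $\prec$, so $P_\prec \subseteq P_{\prec'}$ and $L_\prec \subseteq L_{\prec'}$. Theorem~\ref{th5.7} furnishes a step-linear function $u_{\prec'}$ representing $\prec'$ via \eqref{e5.27}--\eqref{e5.28}. The inclusions $\prec\,\subseteq\,\prec'$ and $\sim\,\subseteq\,\sim'$ then force $u_{\prec'}$ to satisfy \eqref{e6.1}--\eqref{e6.2}, hence $u_{\prec'} \in \mathcal{U}_\prec$ and in particular $\mathcal{U}_\prec \neq \varnothing$. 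The ``$\Longleftarrow$'' parts of \eqref{e6.3} and \eqref{e6.4} are then tautological from the defining conditions.

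The two ``$\Longrightarrow$'' parts I would prove together by contraposition, distinguishing cases on where $z-y$ sits relative to $P_\prec \cup L_\prec \cup (-P_\prec)$. If $z - y \in L_\prec$, every $u \in \mathcal{U}_\prec$ satisfies $u(z-y) = 0$ by \eqref{e6.2}. If $z - y \in -P_\prec$, every $u \in \mathcal{U}_\prec$ satisfies $u(y-z) > 0$ by \eqref{e6.1}, hence $u(z-y) < 0$. These two easy cases already cover \eqref{e6.3} for those configurations, and in each the trichotomy of $\sim$, $\prec$, $\succ$ handles \eqref{e6.4} as well. The essential case -- and the step I expect to be the main obstacle -- is when $z-y$ lies outside all three of $P_\prec$, $L_\prec$, $-P_\prec$. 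Here I must construct a specific $u \in \mathcal{U}_\prec$ with $u(z-y) < 0$, which simultaneously handles the remaining parts of both \eqref{e6.3} and \eqref{e6.4}.

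For this construction I would adjoin $y-z$ to $P_\prec$ as a new positive vector. Let $K_1$ be the smallest convex cone containing $P_\prec \cup \{y-z\}$, namely
$$
K_1 = P_\prec \,\cup\, \{\beta(y-z) : \beta > 0\} \,\cup\, \{p + \beta(y-z) : p \in P_\prec,\ \beta > 0\},
$$
and let $K_2 := L_\prec$. Using the identities $L_\prec + P_\prec = P_\prec$ and $L_\prec - P_\prec = -P_\prec$ (both consequences of the definition of $L_\prec$), a short case analysis on the three possible forms of an element of $K_1$ shows that $K_1 \cap L_\prec = \varnothing$, each sub-case reducing to the excluded configuration $z-y \in P_\prec \cup L_\prec \cup (-P_\prec)$. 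A parallel but longer case analysis on the six combinations of forms for $q$ and $-q$ both lying in $K_1$ shows that $K_1$ is itself asymmetric; every sub-case again collapses to $z-y \in P_\prec \cup L_\prec \cup (-P_\prec)$, contradicting the standing hypothesis.

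With $K_1$ asymmetric and disjoint from $L_\prec$, Theorem~\ref{th6.1} produces an asymmetric conical halfspace $H$ with $K_1 \subseteq H$ and $L_\prec \subseteq Y \setminus H$. Since $L_\prec = -L_\prec$, this forces $L_\prec \cap (H \cup -H) = \varnothing$, so $L_\prec \subseteq L_H$. Define $\prec''$ by $y_1 \prec'' y_2 \Leftrightarrow y_2 - y_1 \in H$; this is a compatible weak preference regularly extending $\prec$. The step-linear representative $u_{\prec''}$ supplied by Theorem~\ref{th5.7} therefore belongs to $\mathcal{U}_\prec$, and from $y - z \in K_1 \subseteq H = P_{\prec''}$ we read off $z \prec'' y$, hence $u_{\prec''}(z-y) < 0$. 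This single $u_{\prec''}$ simultaneously falsifies the hypotheses of the contrapositives of \eqref{e6.3} and \eqref{e6.4}, completing the proof.
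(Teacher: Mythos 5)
Your proposal is correct and follows essentially the same route as the paper's proof: the same nonemptiness argument via Theorems \ref{th6.2} and \ref{th5.7}, the same reduction to the case $z-y \notin P_\prec \cup L_\prec \cup (-P_\prec)$, the same enlarged cone ${\rm conv}\bigl(P_\prec \cup \{\beta(y-z) : \beta > 0\}\bigr)$ shown disjoint from $L_\prec$, and the same appeal to Theorem \ref{th6.1}, the only cosmetic difference being that you invoke the halfspace form of Theorem \ref{th6.1} and route through an explicit regular weak extension $\prec''$ and Theorem \ref{th5.7}, whereas the paper takes the step-linear separator from Theorem \ref{th6.1} directly and verifies by hand that it lies in $\mathcal{U}_\prec$. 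You also spell out the asymmetry of the enlarged cone by case analysis, a point the paper leaves as ``easy to verify.''
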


\begin{proof}
By Theorem \ref{th6.2} any compatible partial preference $\prec$ can be regularly extended to a compatible weak preference $\prec'$. Due to Theorem \ref{th5.7} for $\prec'$ there exists a step-linear function $u':Y \to {\mathbb R}$ satisfying \eqref{e5.27} and \eqref{e5.28}. Since $\prec\,\subset \prec'$ and $\sim\,\subset \sim'$, the step-linear function $u'$ also satisfies \eqref{e6.1} and \eqref{e6.2} and, hence, the family ${\mathcal U}_\prec$ is nonempty.

Let us now prove \eqref{e6.3} and \eqref{e6.4}. The right implications ($\Longrightarrow$) in \eqref{e6.3} and \eqref{e6.4} follows from the definition of the family ${\mathcal U}_\prec$. To prove the left implication ($\Longleftarrow$) in \eqref{e6.3} it is sufficient to show that for any $\bar{y}, \bar{z} \in Y$ such that $\bar{y}\hspace{5pt}/\hspace{-11pt}\prec \bar{z}$ there exists a step-linear function $\bar{u} \in {\mathcal U}_\prec$ for which the equality $\bar{u}(\bar{z} - \bar{y}) \leq 0$ holds. If $\bar{y}\hspace{5pt}/\hspace{-11pt}\prec \bar{z}$ then either $\bar{z} \prec \bar{y}$ or $\bar{z} \asymp \bar{y}$. In the case when $\bar{z} \prec \bar{y}$ we have through \eqref{e6.1} that $u(\bar{z} - \bar{y}) = -u(\bar{y} - \bar{z})<0$ for all $u \in {\mathcal U}_\prec$. Consider the case when $\bar{z} \asymp \bar{y}$. Then, either $\bar{z} \sim \bar{y}$ or $(\bar{z} \asymp \bar{y})\wedge (\bar{z}\hspace{5pt}/\hspace{-10pt}\sim \bar{y})$ (here $\wedge$ is the logical symbol `and'). If $\bar{z} \sim \bar{y}$ through \eqref{e6.2} we have $u(\bar{z} - \bar{y}) = 0$ for all $u \in {\mathcal U}_\prec$. Thus, when $\bar{z} \prec \bar{y}$ or $\bar{z} \sim \bar{y}$ we can take as $\bar{u}$ any step-linear function $u \in {\mathcal U}_\prec$. It remains to consider the case $(\bar{z} \asymp \bar{y})\wedge (\bar{z}\hspace{5pt}/\hspace{-10pt}\sim \bar{y})$, which is equivalent to the condition that $p:=\bar{z}-\bar{y} \in Y \setminus (P_\prec \bigcup L_\prec \bigcup (-P_\prec))$. Note, that $p \ne 0$ since $p \not\in L_\prec$. Let $l^+_p:=\{\alpha p \mid \alpha > 0\}$ be the ray emanating from the origin and going through $p$. Prove that ${\rm conv}(P_\prec\bigcup (-l^+_p)) \bigcap L_\prec = \varnothing$, where ${\rm conv}M$ is the convex hull of the set $M$. Assume on the contrary that there exist $y \in P, \alpha > 0$ and $\lambda \in (0,1)$ such that $\lambda y - (1 - \lambda)\alpha p \in L_\prec$. Since $L_\prec$ is a vector space, $-y + \lambda^{-1}(1 - \lambda)\alpha p \in L_\prec$. By the definition of $L_\prec$ we obtain $y + (-y + \lambda^{-1}(1 - \lambda)\alpha p) = \lambda^{-1}(1 - \lambda)\alpha p  \in P_\prec$, which contradicts the condition $p \not\in P_\prec$. The obtained contradiction proves the condition ${\rm conv}(P_\prec\bigcup (-l^+_p)) \bigcap L_\prec = \varnothing$. It is easy to verify that ${\rm conv}(P_\prec\bigcup (-l^+_p))$ is an asymmetric convex cone. Consequently, by Theorem \ref{th6.1} there exists a step-linear function $\bar{u}:Y \to {\mathbb R}$ such that $\bar{u}(y) > 0$ for all $y \in {\rm conv}(P_\prec\bigcup (-l^+_p))$ and $\bar{u}(y) \leq 0$ for all $L_\prec$. Since the function $\bar{u}$ is homogeneous and $-p \in {\rm conv}(P_\prec\bigcup (-l^+_p))$ we have that $\bar{u}(p) < 0$ and, since $L_\prec$ is a vector space, we have $\bar{u}(y)=0$ for all $y \in L_\prec$. This completes the proof of \eqref{e6.3}.

To prove the left implication ($\Longleftarrow$) in \eqref{e6.4} we show that for any $y,z \in Y$ such that $y \not\sim z$ there exists $u \in {\mathcal U}_\prec$ for which $u(z-y) \ne 0$. If $y \not\sim z$ then the following three disjoint alternatives can be held: $y\prec z,\,z \prec y$ and $(z \asymp y)\wedge (z \hspace{5pt}/\hspace{-10pt}\sim y)$. For $y\prec z,\,z \prec y$ through \eqref{e6.1} we have $u(z-y) \ne 0$ for all $u \in {\mathcal U}_\prec$. While for the alternative $(z \asymp y)\wedge (z \hspace{5pt}/\hspace{-10pt}\sim y)$ it was shown above there exists a step-linear function $u \in {\mathcal U}_\prec$ such that $u(z-y) < 0$. Thus, if $u(z-y)= 0\,\,\text{for all}\,\,u \in {\mathcal U}_\prec$ then $y \sim z$.

This completes the proof of the theorem.
\end{proof}

Theorem \ref{th6.3} can be considered as the analytical version of the Dushnik-Miller theorem \cite{Dushnik} adapted to compatible partial preference.

\begin{theorem}
If a compatible partial preference $\prec$ defined on a real vector space $Y$ is relatively algebraic open then there exists a linear function $l:Y \to {\mathbb{R}}$ such that
\begin{equation}\label{e6.5}
y \prec z  \Longrightarrow \phi(y) < \phi(z)\,\,\text{and}\,\,y \sim z \Longrightarrow \phi(y) = \phi(z).
\end{equation}
\end{theorem}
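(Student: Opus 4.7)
The plan is to separate the positive cone $P_\prec$ from its associated subspace $L_\prec$ by a linear functional on ${\rm Lin}(P_\prec)$ and then extend this functional to all of $Y$. This is essentially the relatively-open specialization of the separation argument used in the proof of Proposition~\ref{pr4.11}.

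First I would observe that the relative openness of $\prec$ means $P_\prec = {\rm icr}\,P_\prec \ne \varnothing$, so $P_\prec$ is algebraically open as a subset of its affine (equivalently, linear) hull ${\rm Lin}(P_\prec) = P_\prec - P_\prec$. Moreover $P_\prec \cap L_\prec = \varnothing$ since $P_\prec$ is an asymmetric convex cone (see Section~\ref{sec-2.2}).

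Next, I would apply the algebraic Hahn--Banach separation theorem (Corollary 5.62 of \cite{Aliprantis}, used already in the proof of Proposition~\ref{pr4.11}) inside the subspace ${\rm Lin}(P_\prec)$ to the disjoint convex sets $P_\prec$ and $L_\prec$. Since $P_\prec$ has nonempty algebraic interior in ${\rm Lin}(P_\prec)$, there exists a nonzero linear functional $\phi_0 : {\rm Lin}(P_\prec) \to {\mathbb{R}}$ with $\phi_0(y) > 0$ for every $y \in P_\prec$ (strictness being forced by the algebraic openness of $P_\prec$) and $\phi_0(y) \le 0$ for every $y \in L_\prec$; since $L_\prec$ is a linear subspace, the latter inequality upgrades to $\phi_0 \equiv 0$ on $L_\prec$. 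I would then extend $\phi_0$ linearly from ${\rm Lin}(P_\prec)$ to all of $Y$ by choosing any algebraic complement and setting the extension to vanish there. Calling the result $\phi$, the identities \eqref{e3} and \eqref{e3a} immediately yield the two implications in \eqref{e6.5}.

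No real obstacle is present: the whole argument is the specialization of the separation used in the proof of Proposition~\ref{pr4.11} to the case where $P_\prec$ itself is the unique open component of $\mathcal{O}(P_\prec)$, which is precisely the characterization of relative openness given by the corollary following Theorem~\ref{th3.6}. The only small point requiring attention is that the strict positivity $\phi_0(y) > 0$ on $P_\prec$ rests on the algebraic openness of $P_\prec$ inside ${\rm Lin}(P_\prec)$, not merely on the weak separation that algebraic Hahn--Banach would otherwise provide.
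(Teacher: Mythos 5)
Your proposal is correct and follows essentially the same route as the paper's proof: separate the disjoint convex sets $P_\prec$ and $L_\prec$ via the algebraic separation theorem \cite[Corollary 5.62]{Aliprantis} to obtain a linear $\phi$ with $\phi(y)>0$ on $P_\prec$ and $\phi \equiv 0$ on $L_\prec$, then conclude via \eqref{e3} and \eqref{e3a}. Your only deviations --- separating inside ${\rm Lin}(P_\prec)$ and then extending by zero on an algebraic complement, and spelling out that strict positivity comes from $P_\prec={\rm icr}\,P_\prec$ --- are harmless elaborations of details the paper leaves implicit.
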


\begin{proof}
Since $P_\prec$ is relatively algebraic open  and $P_\prec\bigcap L_\prec = \varnothing$, then due to \cite[Corollary 5.62]{Aliprantis} there exists a linear function $\phi: Y \to {\mathbb{R}}$ such that $\phi(y) > 0$ for all $y \in P_\prec$ and $\phi(z) = 0$ for all $z \in L_\prec$. It follows from \eqref{e3} and \eqref{e3a} that the linear function $\phi$ satisfies \eqref{e6.5}.
\end{proof}


\section*{Disclosure statement}

There are no conflicts of interest to disclose.

\section*{Funding}

The research was supported by the State Program for Fundumental Research of Republic of Belarus 'Convergence-2025'.

\end{document}